\newcommand{\ri}{\rightarrow}
\newcommand{\Ga}{\Gamma}
\newcommand{\usr}{\overset\sim\rightarrow}
\newcommand{\p}{\pi_1}
\newcommand{\CP}{\mathbb{CP}}
\newcommand{\Int}{\rm Int}
\newcommand{\h}{h}
\newcommand{\al}{\alpha}
\newcommand{\vp}{\varphi}
\newcommand{\N}{\mathbb{N}}
\newcommand{\C}{\mathbb{C}}
\newcommand{\CC}{\mathbb{C}}
\newcommand{\ZZ}{\mathbb{Z}}
\newcommand{\PP}{\mathbb{P}}
\newcommand{\FF}{\mathbb{F}}
\newcommand{\RR}{\mathbb{R}}
\newcommand{\Z}{\mathbb{Z}}
\newcommand{\G}{\Gamma}
\newcommand{\A}{\mathcal{A}}
\newcommand{\LL}{\mathcal{L}}
\newtheorem{thm}{Theorem}[section]
\newtheorem*{thm*}{Theorem}
\newtheorem{corollary}[thm]{Corollary}
\newtheorem{lemma}[thm]{Lemma}
\newtheorem{step}[thm]{Step}
\newtheorem{conjecture}[thm]{Conjecture}
\newtheorem*{lemma*}{Lemma}
\newtheorem{definition}[thm]{Definition}
\newtheorem{example}[thm]{Example}
\newtheorem{prs}[thm]{Proposition}
\newtheorem{remark}[thm]{Remark}
\newtheorem{assumption}[thm]{Assumption}
\newtheorem{notation}[thm]{Notation}
\newtheorem*{notation*}{Notation}
\begin{document}

\title [Structure of CF fundamental groups and CL arrangements]{On the structure of conjugation--free fundamental groups of conic--line arrangements}

\author{Michael Friedman and David Garber}

\address{Michael Friedman, Institut Fourier, 100 rue des maths, BP 74, 38402 St Martin d'H\'eres cedex, France; Max Planck Institute for Mathematics, Vivatsgasse 7, 53111 Bonn, Germany}
\email{Michael.Friedman@ujf-grenoble.frþ}

\address{David Garber, Department of Applied Mathematics, Faculty of
  Sciences, Holon Institute of Technology, 52 Golomb st., PO
  Box 305, 58102 Holon, Israel}
\email{garber@hit.ac.il}

\begin{abstract}

The fundamental group of the complement of a hyperplane arrangement plays an important role in studying the corresponding arrangements.
In particular, for large families of hyperplane arrangements, this fundamental group, being isomorphic to the fundamental group of a complement of a line arrangement, has some remarkable properties: either it is a direct sum of free groups and a free abelian group, or it has a conjugation-free geometric presentation.

In this paper, we first give a complete proof to the following key lemma: if we draw a new line through only one intersection point of a given real line arrangement whose fundamental group is conjugation-free, then the fundamental group of the new arrangement is also conjugation-free.

 Second, we generalize this lemma to the case of conic-line arrangements. Moreover, we prove that once the graph associated to conic-line arrangements (defined slightly different than the corresponding graph for line arrangements) has no cycles, then the fundamental group of its complement has a conjugation-free geometric presentation and in addition can be written as a direct sum of free groups and a free abelian group. Also, we show that if the graph consists of one cycle, and the conic does not pass through all the multiple points  corresponding to the vertices of the cycle, then the fundamental group has a conjugation-free geometric presentation as well.

 For conclusion, we extend the family of real line arrangements having a conjugation-free geometric presentation (for their fundamental group)
by defining the notion of a conjugation-free graph. We also extend this notion to certain families of conic-line arrangements.

\end{abstract}

\maketitle

%\tableofcontents

\section{Introduction}

The fundamental group of the complement of a plane curve is a very important topological invariant.
For example, it is used to distinguish between curves that form a Zariski pair, which is a pair of curves having the same combinatorics but
non-homeomorphic complements in $\CC\PP^2$ (see \cite{AB-CR} for the exact definition and \cite{ABCT} for a survey).
Another example is that while the fundamental group of the complement of a nodal curve is abelian (see \cite{Za2}), there are curves with non-abelian fundamental groups. Thus,
it is interesting to explore finite non-abelian groups which
%serve as fundamental groups of complements of plane curves in general,
arise that way,
see for example \cite{AB,AB1,Deg,Z1}.

Moreover, the Zariski-Lefschetz hyperplane section theorem (see \cite{milnor})
states that
$\pi_1 (\CC\PP^N - S) \cong \pi_1 (H - (H \cap S)),$
where $S$ is a hypersurface and $H$ is a generic 2-plane.
Since $H \cap S$ is a plane curve, the fundamental groups of complements of plane curves
can also be used for computing the fundamental groups of complements of hypersurfaces  in $\CC\PP^N$.
Note that when $S$ is a hyperplane arrangement, $H \cap S$ is a line arrangement in $\CC\PP^2$. Thus, one of the main tools for investigating the topology of hyperplane arrangements is the fundamental groups $\pi_1(\CC\PP^2- \LL)$ and $\pi_1(\CC^2 - \LL)$, where $\LL$ is a line arrangement.

For line arrangements these groups have very interesting properties (see e.g. \cite[Section 5.3]{OT}). They are abelian if and only if $\LL$ has only nodes as intersection points, see for example \cite[Example 1.6(a)]{DOZ}. Moreover, Fan \cite{Fa2} and Eliyahu et al. \cite{ELST} proved that this group is a direct sum of a free abelian group and free groups if and only if a certain graph, associated to the intersection points of $\LL$ (see Section \ref{secLineArr}), has no cycles. Based on this, Eliyahu et al. \cite{EGT1,EGT2} showed that other properties hold for certain presentations of this group: conjugation-free and complemented presentations.

\medskip

As conic--line arrangements are a natural generalization of line arrangements, an immediate question that arises is whether the above properties (e.g. conjugation--freeness or a structure of a direct sum of a free abelian group and free groups) hold for these arrangements too. One should note, contrary to the situation for line arrangements, that this group can be abelian even if the conic--line arrangement has singular points which are not nodes (see \cite{Deg} and Figure \ref{pi1Ab} below).

Note that the fundamental groups $\pi_1(\CC\PP^2 - \A)$ and $\pi_1(\CC^2 - \A)$, for some families of conic-line arrangements $\A$, were studied by Amram et al. (see e.g. \cite{AT3,AT2} and especially \cite[Theorem 6]{AT1}). Moreover, Zariski pairs consisting of conic-line arrangements were studied by, for example, Namba-Tsuchihashi \cite{NT} and Tokunaga \cite{Tok}, but a research in the spirit of the above questions has not been carried out yet.

\medskip

In this paper, we generalize Fan's result to the case of conic--line arrangements. After surveying in Section \ref{secLineArr} the known results on line arrangements, the braid monodromy technique and the conjugation-free property, Section \ref{secRealLineArr} deals with the preservation of this property under certain actions for line arrangements, which corrects and completes the proofs given in \cite{EGT2}. Section \ref{secRealCLArr} examines this property for CL arrangements. In Section \ref{secNoCycles}, we present a necessary condition which implies that the fundamental group of some families of conic-line arrangements is a direct sum of a free abelian group and free groups. Explicitly, we generalize Fan's concept of a graph associated to line arrangements to the case of real conic-line arrangements and prove that once this graph has no cycles, then the corresponding fundamental group has the desired structure (for an explicit formulation, see Theorem \ref{main_result}). In Section \ref{secOneCycleNotPass}, we prove a few  propositions about the structure of the fundamental group of a conic-line arrangement whose graph consists of a single cycle, where the conic does not pass through all the multiple points corresponding to the vertices of the cycle.

In Section \ref{secCFGraphs}, we define the notion of a {\it conjugation-free graph}, for both  line arrangements and  conic-line arrangements. We show that for every arrangement whose graph is a conjugation-free graph, the fundamental group of its complement has a
conjugation-free  geometric presentation.
\medskip

{\textbf{Acknowledgements}}: We would like to thank Meital Eliyahu for stimulating talks. Also, we would like to thank an anonymous referee of a previous version of this paper for useful and important suggestions, especially regarding the proof of the preservation of the conjugation-free property.

 The first author would like to thank the Max-Planck-Institute f\"ur Mathematik in Bonn for the warm hospitality and support and the Fourier Institut in Grenoble, where the final part of this paper was carried out.

\section{Arrangements, Braid monodromy and Conjugation-Free property} \label{secLineArr}

In this section, we give a short survey of some known results concerning the structure of the fundamental group of the complement of a line arrangement. After that, we present the family of conic--line arrangements that we deal with and give a short survey about the braid monodromy technique, for computing presentations of fundamental groups of complements of plane curves. In the last subsection, we present the notion of conjugation-free geometric presentation of a fundamental group associated to a line arrangement or to a conic-line arrangement.

\subsection{Line arrangements and conic-line arrangements}

An {\it affine line arrangement} in $\CC^2$ is a union of copies of $\C^1$ in $\C^2$. Such an arrangement is called {\em real} if the defining equations of all its lines can be written with real coefficients, and {\em complex} otherwise.

For real and complex line arrangements $\mathcal L$, Fan \cite{Fa2} defined a graph $G(\mathcal L)$ which is associated to its multiple points (i.e. points where more than two lines are intersected). We give here its version for real arrangements (the general version is more delicate to explain and will be omitted): Given a real line arrangement $\mathcal L$, the graph $G(\mathcal L)$ of multiple points lies on the real part of $\mathcal L$. It consists of the multiple points of $\mathcal L$ as vertices, with the
segments between the multiple points on lines which have at least two multiple points as edges. Note that if the arrangement consists of three
multiple points on the same line, then $G(\mathcal L)$ has three vertices on the same edge (see Figure \ref{graph_GL}(a)).
If two such lines happen to intersect in a simple point (i.e. a point where exactly two lines are intersected), it is ignored
(i.e. there is no corresponding vertex in the graph). See another example in Figure \ref{graph_GL}(b) (note that Fan's definition gives a graph slightly different from the graph defined in \cite{JY,WY}).

\begin{figure}[!ht]
\epsfysize 4cm
\centerline{\epsfbox{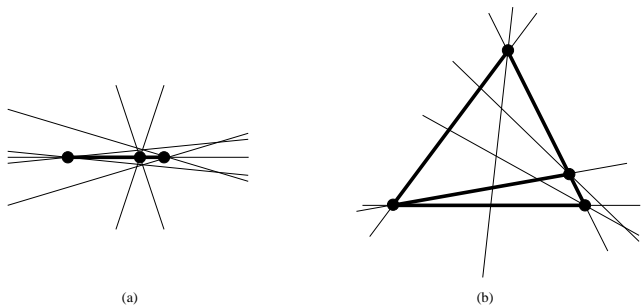}}
\caption{Examples for the graph $G(\mathcal L)$.}\label{graph_GL}
\end{figure}

\medskip

Fan \cite{Fa1,Fa2} proved the following result:

\begin{prs}[Fan]\label{Fan}
Let $\mathcal L$ be a complex arrangement of $k$ lines and $S=\{a_1, \dots, a_p\} $
be the set of all multiple points of $\mathcal L$.  Suppose that
$\beta (\mathcal L)=0$, where $\beta (\mathcal L)$ is the first Betti number of the graph $G(\mathcal L)$ (hence $\beta (\mathcal L)=0$
means that the graph $G(\mathcal L)$ has no cycles). Then:
$$\pi_1 (\CC  ^2 - \mathcal L) \cong \ZZ^r \oplus \bigoplus_{i=1}^p \FF_{m(a_i)-1},$$
where $m(a_i)$ is the multiplicity of the intersection point $a_i$ and\break $r=k+p-\sum\limits_{i=1}^p m(a_i)$.
\end{prs}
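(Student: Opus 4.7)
The plan is to prove the statement by induction on the number of multiple points $p$ of $\mathcal L$. For the base case $p = 0$, all intersection points of $\mathcal L$ are simple nodes, so the classical Oka--Sakamoto theorem (or equivalently Zariski's computation for nodal arrangements) yields $\pi_1(\CC^2 - \mathcal L) \cong \ZZ^k$, which matches the formula since $r = k$ and the direct sum is empty.

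For the inductive step, suppose $p \geq 1$. Since $\beta(\mathcal L) = 0$, the graph $G(\mathcal L)$ is a forest and hence contains a leaf vertex, which I will call $a_0$. Writing $m := m(a_0)$, by the definition of a leaf at most one of the $m$ lines through $a_0$ contains another multiple point of $\mathcal L$; let $\ell_1, \dots, \ell_{m-1}$ be the remaining $m-1$ lines through $a_0$. Each $\ell_j$ meets every other line of $\mathcal L$ either at $a_0$ or at a simple node. Set $\mathcal L' := \mathcal L \setminus \{\ell_1, \dots, \ell_{m-1}\}$; then $\mathcal L'$ has $k - (m-1)$ lines, its set of multiple points is $\{a_1, \dots, a_p\} \setminus \{a_0\}$ with unchanged multiplicities (since the removed lines carry no other multiple points), and $G(\mathcal L')$ is the forest obtained from $G(\mathcal L)$ by deleting the leaf $a_0$. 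The inductive hypothesis applies, and an arithmetic check shows that the value of $r$ is preserved under this reduction.

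The core step is then to show
$$\pi_1(\CC^2 - \mathcal L) \;\cong\; \pi_1(\CC^2 - \mathcal L') \oplus \FF_{m-1},$$
so that reinstating the $m-1$ auxiliary lines introduces exactly the free factor $\FF_{m-1}$ attached to the multiple point $a_0$. The natural tool is the Zariski--Van Kampen theorem (or equivalently the braid monodromy technique surveyed in the next section): the $m-1$ added lines contribute $m-1$ new meridian generators whose only interactions with $\mathcal L'$ are commutation relations at simple nodes, plus a single pencil-type relation at $a_0$ with the surviving meridian of the unique remaining line through $a_0$. The commutation relations force the new generators to commute with everything coming from $\pi_1(\CC^2 - \mathcal L')$, while the pencil at $a_0$, acting on the $m$ meridians at that point, imposes exactly one independent relation, producing a free factor of rank $m-1$.

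The main obstacle is controlling the conjugations that the braid monodromy introduces when meridians of the $\ell_j$'s are transported past intermediate branch points: a priori such conjugations could tangle the new generators with those coming from $\mathcal L'$ and obstruct the desired direct-sum decomposition. The hypothesis $\beta(\mathcal L) = 0$ is what makes this manageable: the absence of cycles in $G(\mathcal L)$ implies there is no combinatorial obstruction forcing nontrivial conjugating words, and with a careful choice of a generic projection one can trivialize every such conjugation along the $m-1$ auxiliary lines. In the language developed later in the paper, the pencil at the leaf $a_0$ contributes in a conjugation-free manner, and it is at this point that one would invoke (or independently verify) a key lemma to the effect that adding lines through a single intersection point preserves the conjugation-free presentation. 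Once this is established, the direct-sum decomposition holds and the induction closes.
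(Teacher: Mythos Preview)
The paper does not prove this proposition: it is quoted as Fan's theorem from \cite{Fa1,Fa2} and used later as a black box (see the proof of Theorem \ref{main_result} in Section \ref{secNoCycles}, where the authors write ``we can use Fan's result \cite{Fa2}''). So there is no proof in the paper to compare your attempt against.

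That said, your outline has a genuine gap at the crucial step. The decomposition $\pi_1(\CC^2-\mathcal L)\cong\pi_1(\CC^2-\mathcal L')\oplus\FF_{m-1}$ is exactly what needs to be proved, and you do not prove it. You cannot invoke Oka--Sakamoto here: the curve $\ell_1\cup\cdots\cup\ell_{m-1}$ meets $\mathcal L'$ at the point $a_0$ with multiplicity $m-1$, not in $m-1$ distinct points, so the transversality hypothesis fails. You correctly identify the obstacle (conjugations in the braid monodromy) and then defer to ``a key lemma to the effect that adding lines through a single intersection point preserves the conjugation-free presentation.'' But that lemma is precisely Proposition \ref{lemAddLineComProof}(1) of the present paper, whose complete proof occupies all of Section \ref{secRealLineArr}; it is the main technical contribution here, not a tool you may assume. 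Moreover, even granting conjugation-freeness for $\mathcal L$, you still owe an argument that a conjugation-free presentation with $\beta=0$ actually splits as the stated direct sum; this is not automatic from the shape of the relations.

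Finally, note a circularity issue: the paper's own proof of the conic--line generalization (Theorem \ref{main_result}) reduces to Fan's result for $\mathcal A-C$. If Fan's result were to be proved via the conjugation-free machinery of this paper, one would have to check carefully that no step in Section \ref{secRealLineArr} relies on Fan.
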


\begin{remark}
\emph{In fact Fan proved the above proposition for the projective fundamental group; however, the derivation for the affine group is trivial.}
\end{remark}

Eliyahu et al. \cite{ELST} proved the inverse direction to Fan's result (which was conjectured by Fan \cite{Fa2}), i.e. if the fundamental group of the arrangement is a direct sum of free groups and a free abelian group, then the associated graph has no cycles.

\medskip

We will generalize Fan's result to {\it real conic-line arrangements}. We start by defining them.

\begin{definition} \label{defCLarr}
A {\em real conic-line (CL) arrangement} $\A$ is a union of conics and lines in $\CC^2$, where all the conics and the lines are defined over $\RR$ and every singular point (with respect to a generic projection) of the arrangement is in $\RR^2$. In addition, for every conic $C \in \A$, $C \cap \RR^2$  is not an empty set, neither a point nor a (double) line.
\end{definition}

Moreover, we assume from now on the following assumption:
\begin{assumption}\label{assume}
Let  $\A$ be a real CL arrangement. Then, for each pair of components $h_1,h_2$ of $\A$, $h_1$ and $h_2$ intersect transversally (i.e. the intersection multiplicity of $h_1,h_2$ is 1 at each intersection point).
\end{assumption}
For example, a tangency point is not permitted.

\begin{remark} \label{remGenParab}
(1) We assume that no line passes through the branch
points of the conics with respect to a generic projection.

(2) As every singular point of a real CL arrangement is in $\RR^2$, the conics in the arrangements we deal with are either ellipses or hyperbolas, but not parabolas, since the second branch point of a parabola is at infinity.
\end{remark}
\medskip

Similar to Fan's graph for line arrangements, one can associate the following graph to a real CL arrangement:

 \begin{definition}\label{defGraph}\emph{
\emph{The graph} $G(\A)$ for a real CL arrangement $\A$ is defined as follows: its vertices  will be the multiple points (with multiplicity larger than 2), and its edges will be the segments {\it on the lines} connecting these points if two such points are  on the same line
(see an example in Figure \ref{graphEx}).}
\end{definition}

\begin{figure}[!ht]
\epsfysize 4cm
\centerline{\epsfbox{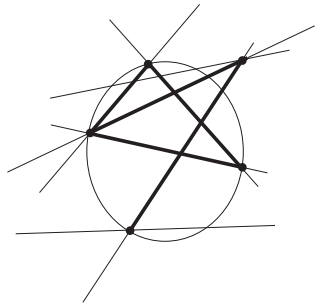}}
\caption{An example for the graph $G(\mathcal A)$ for a CL arrangement
$\mathcal A$.}\label{graphEx}
\end{figure}

\medskip

Then, one of the main results of this paper is:
\begin{thm}\label{main_result}
Let $\A$ be a real CL arrangement with one conic and $k$ lines, and $S=\{a_1, \dots, a_p,b_1,\dots,b_q\}$ be the set of all multiple points of $\A$, where the conic is passing through the intersection points $a_1, \dots, a_p$.  Suppose that $\beta (\A)=0$, where $\beta (\A)$ is the first Betti number of the graph $G(\mathcal A)$ (hence $\beta (\mathcal A)=0$
means that the graph $G(\mathcal A)$ has no cycles). Then:
$$\pi_1 (\CC ^2 - \mathcal A) \cong \ZZ^r \oplus \bigoplus\limits_{i=1}^p \FF_{m(a_i)-2} \oplus \bigoplus\limits_{i=1}^q \FF_{m(b_i)-1} ,$$
where $m(x)$ is the multiplicity of the singular point $x$ and\break $r=k+2p+q+1-\sum\limits_{i=1}^p m(a_i)- \sum\limits_{i=1}^q m(b_i)$.
\end{thm}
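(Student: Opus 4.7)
The approach is induction on the number of lines $k$. The base case $k=0$ is a single smooth conic $C \subset \CC^2$, for which $\pi_1(\CC^2 - C) \cong \ZZ$ agrees with the formula ($p=q=0$, $r=1$). For the inductive step, the hypothesis $\beta(\A) = 0$ means that $G(\A)$ is a forest, so by picking a leaf vertex $v$ of $G(\A)$ (or, if $G(\A)$ is empty, any line) one obtains a line $L \in \A$ whose only possible vertex of $G(\A)$ is $v$. Set $\A' := \A \setminus L$; then $G(\A')$ is obtained from $G(\A)$ by removing only edges of $L$ incident to $v$ and, when $m_{\A}(v) = 3$, also the vertex $v$ itself. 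In either case $G(\A')$ remains a forest, and so the inductive hypothesis yields the claimed direct-sum structure for $\pi_1(\CC^2 - \A')$.

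The essential technical input for comparing $\pi_1(\CC^2 - \A)$ with $\pi_1(\CC^2 - \A')$ is the CL-version of the conjugation-free preservation lemma proved in Section \ref{secRealCLArr}: since $L$ meets $\A'$ in at most one multiple point, $\pi_1(\CC^2 - \A)$ admits a conjugation-free geometric presentation. The generators are the meridians of the $k$ lines and the conic, and the relations, computed from a generic-projection braid monodromy, are read off locally at each singularity without conjugating factors. At a node one obtains a pure commutator; at a multiple point $b_j$ of $m(b_j)$ transverse lines one obtains the standard cyclic relations forcing the product of the $m(b_j)$ meridians to be central, with quotient $\FF_{m(b_j)-1}$; at a conic-point $a_i$ of multiplicity $m(a_i)$ the two local branches of the conic represent the same global meridian in $H_1$, and after identifying them the local quotient becomes $\FF_{m(a_i)-2}$. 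The no-cycle condition on $G(\A)$ is precisely what guarantees that the local contributions at distinct multiple points do not mix into cross-relations, so they assemble into a genuine direct sum.

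The main obstacle will be the local analysis at a conic-point $a_i$, because under a generic projection the conic produces two smooth branches carrying two distinct local meridians that coincide globally, making the local CF relations more delicate than in the pure-line case; one must show that this identification produces the summand $\FF_{m(a_i)-2}$ rather than $\FF_{m(a_i)-1}$. Once this local contribution is pinned down, the arithmetic is immediate: abelianization must give a free abelian group of rank $k+1$ (one meridian per irreducible component), and solving for the remaining $\ZZ^r$ factor recovers exactly $r = k + 2p + q + 1 - \sum_i m(a_i) - \sum_j m(b_j)$. The induction then closes by checking that the parameters $k, p, q$ and the multiplicities change consistently upon reinstating $L$ in each of the cases $m_{\A}(v) \geq 4$, $m_{\A}(v) = 3$, and $v$ being of $a_i$- or $b_j$-type.
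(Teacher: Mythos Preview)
Your outline has the right starting point — establishing the conjugation-free presentation via the preservation lemma of Section~\ref{secRealCLArr} — but the subsequent argument diverges from the paper's and leaves a real gap. The paper does \emph{not} induct on lines, nor does it attempt to read off the direct-sum structure locally from the CF relations. Instead, it removes the \emph{conic} $C$: Step~\ref{prop_commute} shows, using the forest structure of $G(\A)$ and the CF relations, that the single conic generator $x$ commutes with every line generator in $\pi_1(\CC^2-\A)$. This immediately gives $\pi_1(\CC^2-\A)\cong\langle x\rangle\oplus\pi_1(\CC^2-(\A-C))$, and since $\beta(\A-C)=0$ one invokes Fan's theorem for the line arrangement $\A-C$ as a black box. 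The multiplicity of each $a_i$ drops by one in $\A-C$, which is exactly where the $\FF_{m(a_i)-2}$ summands come from — no separate local analysis at conic-points is needed.

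Your proposal, by contrast, tries to assemble the direct sum directly from the local CF relations. The sentence ``the no-cycle condition on $G(\A)$ is precisely what guarantees that the local contributions at distinct multiple points do not mix into cross-relations'' is doing all the work and is not justified; this is essentially the content of Fan's theorem (and its CL extension), which is not a triviality even with the CF presentation in hand. Moreover, at a conic-point $a_i$ you note that the two local conic meridians agree in $H_1$, but what you actually need is that the conic generator \emph{commutes} with the line generators in $\pi_1$ — otherwise the cyclic relation $[x,\G_{i_1},\dots,\G_{i_{m-1}}]=e$ does not reduce to a relation among the $\G_{i_j}$ alone, and you cannot extract an $\FF_{m(a_i)-2}$ summand. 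That commutation is precisely Step~\ref{prop_commute}, which you have not established. Finally, your induction on $k$ is never closed: you obtain the structure of $\pi_1(\CC^2-\A')$ by hypothesis but then abandon the comparison with $\pi_1(\CC^2-\A)$ in favour of the local analysis, so the inductive step is left hanging.
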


Note that while for line arrangements the inverse direction (i.e. such a structure of the fundamental group implies that the associated graph has no cycles) is correct \cite{ELST}, for CL arrangements it is not true anymore. For example, take three generic lines and a circle passing through the three intersection points (see Figure \ref{pi1Ab}(a)). Then, the fundamental group of the complement of this arrangement is abelian \cite{Deg}, although the first Betti number of the graph is $1$. We generalize this phenomenon in \cite{FG2}, showing that for a CL arrangement whose associated graph is a cycle of odd length and the conic passes through all the multiple points corresponding to the vertices of the cycle and all the multiple points have multiplicity 3, the corresponding fundamental group is abelian.

Note also that there are CL arrangements with a tangent point (which is excluded by our restrictions, see Assumption \ref{assume}) with an abelian fundamental group of the complement. For example, three lines and a conic tangent  only to one of the lines (see Figure \ref{pi1Ab}(b)) has an abelian fundamental group (see \cite{Deg}).

\begin{figure}[!ht]
\epsfysize 3.5cm
\epsfbox{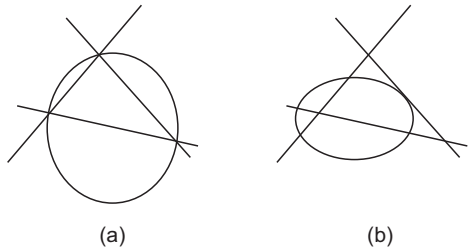}
\caption{CL arrangements with an abelian affine fundamental group $\Z^4$: arrangement (a) has $\beta (\A)=1>0$, and arrangement (b) has a tangency point.}\label{pi1Ab}
\end{figure}

\subsection{The braid monodromy of plane curves and the Zariski-van Kampen theorem}\label{bm_sec}

The reader who is familiar with the definition of the braid monodromy, its computation and the relevant Zariski-van Kampen theorem, can skip this subsection.

\medskip

We start by defining the braid monodromy associated to a plane curve.

\begin{definition}\label{defBraidGr}
\emph{Let $D$ be a closed disk in $ \mathbb{R}^2,$ and $K\subset \Int(D)$ a finite set of $n$ points.
The {\it braid group} $B_n[D,K]$ can
be defined as the group of equivalence classes of diffeomorphisms $\beta$
of $D$ such that $\beta(K) = K\,$ and $ \beta |_{\partial D} =
\text{Id}|_{\partial D}$, where two
diffeomorphisms are {\it equivalent} if they induce the same automorphism on $\pi_1(D - K,u)$.}
\end{definition}

Let $a,b\in K,$ and let $\sigma$ be a smooth simple path in
$\Int(D)$ connecting $a$ with $b$ \ such that $\sigma\cap K=\{a,b\}.$
Choose a small regular neighborhood $U$ of $\sigma$ contained in
$\Int(D),$ such that $U\cap K=\{a,b\}$. The
diffeomorphism of $D$ that switches the points $a$ and $b$ by a
counterclockwise $180^\circ$ rotation and is the identity on
$D - U$, defines an element of $B_n[D,K],$ called
{\it the half-twist defined by
$\sigma$} and denoted by $H(\sigma)$.

\medskip

\begin{definition} {The braid monodromy with respect to $C,\pi,u$.} \\
\emph{Let $C \subset \C^2$ be a curve. Choose a point $O \in \C^2, O \not\in C$, such that the projection $p: \C^2 \to\C^1 = \ell$  with a center $O$ (to a generic line $\ell$ called the \emph{reference line}) will be generic when restricting it to $C$ Denote $\pi = p|_C$ and
let $m=\deg\pi = \deg\, C$. Let $N=\{x\in \ell \bigm| \#\pi^{-1}(x)< m\}.$
Take $u\in \ell - N,$ % s.t.  $\Re(x)\ll u$ \ $\forall x\in N.$
and let  $\C^1_u=p^{-1}(u).$  There is a  naturally defined homomorphism:
$$\varphi: \pi_1(\ell-N,u) \rightarrow B_m[\C_u^1,\C_u^1\cap C],$$
which is called {\it the braid monodromy with respect to} $C,\pi,u$,  describing the motion of the points in the fiber
(see \cite{MoTe1})}.\end{definition}

In fact, letting $E$ be a big disk in $\ell = \C^1$ such that $N \subset E$, we can
also choose the path in $E- N$ not to be a loop, but just a
non-selfintersecting path. This induces a diffeomorphism between
the models $(D,K)$ at the two ends of the considered path, where
$D$ is a big disk in $\C^1_u$, and $K = \C_u^1\cap C \subset
D$.

\begin{definition} \label{defLefdif}  ${\psi_T, \text{ the Lefschetz diffeomorphism induced by a path} \ T }$.\\
\emph{Let $x_0,x_1 \in E - N$ be two different points,
 $T:[0, 1]\ri E- N$ a non-selfintersecting path in $E - N$ connecting $x_0$ with
$x_1$. There exists a continuous
family of diffeomorphisms $\psi_{(t)}: D\ri D,\ t\in[0,1],$ such
that $\psi_{(0)}={\rm Id}$, $\psi_{(t)}(K(x_0))=K(T(t)) $ for all
$t\in[0,1]$, and  $\psi_{(t)}(y)= y$ for all $y\in \partial D$.
For emphasis, we write $\psi_{(t)}:(D,K(x_0))\ri(D,K(T(t)))$. The }
Lefschetz diffeomorphism induced by a path $T$ \emph{is the
diffeomorphism:
$$\psi_T= \psi_{(1)}: (D,K(x_0))\usr (D,K(x_1)).$$
Since $ \psi_{(t)} \left( K(x_{0}) \right) = K(T(t))$ for all $t\in
[0,1]$, we have a family of canonical isomorphisms:
$$\psi_{(t)}^{\nu}: B_m\left[ D, K(x_{0})\right] \usr B_m\left[
D, K({T(t)})\right], \ \quad \text{for all} \, \,
t\in [0,1].$$}
\end{definition}

Let  $\{\Gamma_i\}$ be a geometric (free) base (called a {\it g-base}) of $\p(\C^1 - N, u)$ (see \cite{MoTe1} for the exact definition), $\vp$ the braid monodromy of $C , \vp:\p(\C^1 - N, u) \rightarrow B_m$. In order to find out a presentation of the fundamental group of the complement of $C$ in $\CC^2$, we have to find out what are $\vp
(\Gamma_i),$ for all $i$. We refer the reader to the definition of
a \textit{skeleton} $\lambda_{x_j}$, for all $x_j \in N$ (see \cite{MoTe2}),
which is a model of a set of consecutive paths connecting points in the fiber,
which coincide when approaching
$A_j=$($x_j,y_j$)$\in C$ from the
right. To describe this situation in more details, for $x_j \in
N$, let $x_j' = x_j + \alpha$, where $0 < \alpha \ll 1$. The skeleton of $x_j$ is defined
as a system of consecutive segments connecting the points in $K(x_j') \cap
D(A_j,\varepsilon)$, where $0 < \alpha \ll \varepsilon \ll 1$ and
$D(A_j,\varepsilon)$ is a disk centered in $A_j$ of radius $\varepsilon$.

For a given skeleton, denote by
$\Delta\langle\lambda_{x_j}\rangle$ the braid which rotates a small neighborhood of the given
skeleton by 180$^\circ$ counterclockwise. Note that if $\lambda_{x_j}$ is a single path, then
$\Delta\langle\lambda_{x_j}\rangle = H(\lambda_{x_j})$.

We also refer the reader to the definition of $\delta_{x_0}$, for
$x_0 \in N$ (see \cite{MoTe2}), which describes the Lefschetz
diffeomorphism induced by a path going below $x_0$, for different
types of singular points (either a transversal intersection of several lines at a point or a branch point; for example, when
going below a node the corresponding Lefschetz diffeomorphism is a half-twist of the corresponding skeleton).

Thus, the Lefschetz diffeomorphism induced by a path going from $x_j'$ to $u$ below the points $x_i$, $1 \leq i \leq j-1$, is the composition
of the corresponding $\delta_{x_i}$'s, i.e. $\prod\limits_{m=j-1}^{1}\delta_{x_m}$ \cite{MoTe1,MoTe2}. We  illustrate the action of a specific Lefschetz diffeomorphism (induced by a line arrangement) in the following example.

\medskip

\begin{example}
We present here an example for computing a skeleton and the effect of applying a Lefschetz diffeomorphism on it (more examples can be found in \cite{EGT1,MoTe1,MoTe2}).

\begin{figure}[!ht]
\epsfysize 6.5cm
\epsfbox{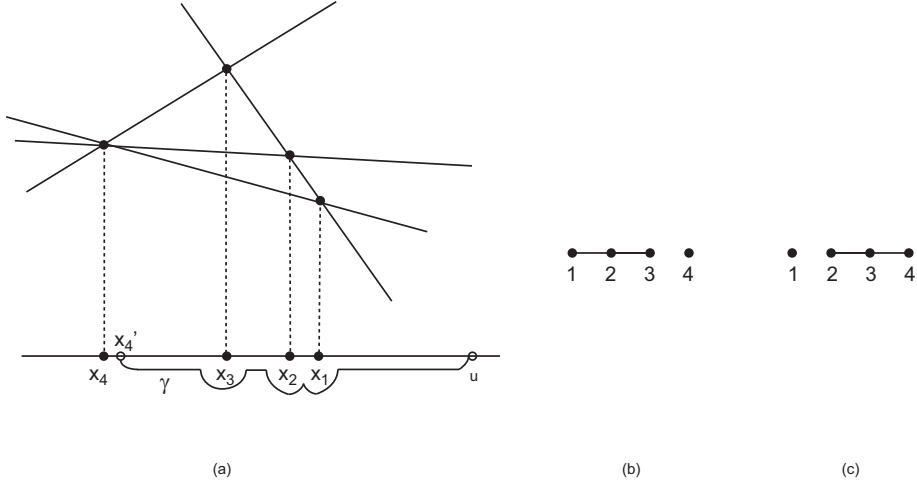}
\caption{Part (a) is an example of a line arrangement, Part (b) is the initial skeleton of the point $x_4$ (i.e. in the fiber over $x_4'$) and Part (c) is its final skeleton (in the fiber over $u$).}\label{exampleSkel}
\end{figure}

$\lambda_{x_4}$, the initial skeleton of the point $x_4$ in Figure \ref{exampleSkel}(a) (i.e. in the fiber over $x_4'$), is presented in Figure \ref{exampleSkel}(b).

$\delta$, which is the Lefschetz diffeomorphism induced by $\gamma$ (a path going from $x_4'$ to $u$), is:
$$\Delta \langle 3,4\rangle \Delta \langle2,3 \rangle \Delta \langle 1,2 \rangle.$$

$\lambda_{x_4}$, the final skeleton of the point $x_4$ in Figure \ref{exampleSkel}(a) (i.e. in the fiber over $u$ after applying $\delta$ on $\lambda_{x_4}$), is presented in Figure \ref{exampleSkel}(c).
\end{example}

\medskip

Based on the braid monodromy, we can compute presentations for the groups $\pi_1(\CP^2 -\overline C)$ and $\pi_1(\C^2 - C)$ (where  $ \overline C \subset \CC \PP^2$ is a projective curve, $C = \overline C \cap \C^2$).
%Recall that a
%{\it $g$-base} is an ordered free base of $\p(D - F,v)$, where
%$D$ is a closed disk, $F$ is a finite set in Int($D$), $v \in
%\partial D$ which satisfies several conditions; see \cite{MoTe1,MoTe2} for the explicit definition.

Let $\{\G_i\}$ be a $g$-base of $G = \pi_1(\C^1_u-(\C^1_u \cap C),u),$ where $\C^1_u = \pi^{-1}(u)
= \C \times \{ u \}$. Then, $\pi_1(\C^2-C,u)$ is generated by the images of $\{\G_i\}$ in
$\pi_1(\C^2-C,u)$. We use now the
Zariski-van Kampen theorem \cite{vK} in order to
compute the relations between the generators of $G.$
The theorem essentially says that every singular point (with respect to a projection from $O$ to $\ell$) induces a relation in $\pi_1(\C^2 - C)$, and these induced relations are all the relations of $\pi_1(\C^2 - C)$.

Since we are dealing only with CL arrangements, we formulate the theorem only for a curve having only branch points, nodes and multiple intersection points as singular points (with respect to a projection).

\medskip

\begin{thm}[Zariski-van Kampen \cite{vK}] \label{vk_thm} Let
$\overline C$ be a CL arrangement in $\CP^2$ and
$C=\C^2\cap\overline C$. Let $s$ be the number of singular points of $C$ with respect to the projection from $O$.
 For every $j \in \{ 1,\dots ,s \}$, consider the skeleton:
 $$ \lambda'_{x_j} = \left\langle (\lambda_{x_j})\bigg(\prod\limits_{m=j-1}^{1}\delta_{x_m}\bigg)\right\rangle.$$
Then, $\pi_1(\C^2-C,u)$ is generated by the images of $\{\G_i\}$ in
$\pi_1(\C^2-C,u)$ and the only relations are those induced by the skeletons
$\lambda'_{x_j}$ in the following way:

If the point $x_j$ is either a node or a branch point, then the skeleton $\lambda'_{x_j}$ is a path connecting two points.
In this case, the relation is either $a_1a_2=a_2a_1$ (for a node) or $a_1=a_2$ (for a branch point), where the computation of the  $a_i$'s will be described after the theorem.

If the point $x_j$ is a intersection point of multiplicity $k$, then the skeleton $\lambda'_{x_j}$ is a set of $k-1$ consecutive paths connecting $k$ points.
In this case, the relations are:
$$a _k a_{k-1} \cdots  a _1 = a _1 a _k a_{k-1} \cdots   a _2 = \cdots =a _{k-1} a _{k-2} \cdots a _1 a _k,$$
where the computation of the $a_i$'s will be described after the theorem.
\end{thm}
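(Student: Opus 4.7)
The plan is to extract the presentation from the fibration structure of the generic projection $p$ and then localize every relation at a singular fiber.

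First, I would observe that, away from the critical locus $N \subset \ell$, the restriction $p : \CC^2 - C \to \ell - N$ is a locally trivial $C^\infty$ fibration whose generic fiber is the punctured plane $\C^1_u - (\C^1_u \cap C)$, with $m$ punctures. Since $\ell - N$ deformation retracts onto a wedge of $s$ circles, the long exact homotopy sequence collapses to the short exact sequence
$$1 \to \pi_1(\C^1_u - (\C^1_u \cap C), u) \to \pi_1(p^{-1}(\ell - N), u) \to \pi_1(\ell - N, u) \to 1.$$
The inclusion $p^{-1}(\ell - N) \hookrightarrow \CC^2 - C$ induces a surjection on $\pi_1$ (passing from $\ell - N$ to $\ell$ amounts to filling in the removed fibers), so $\pi_1(\CC^2 - C, u)$ is generated by the images of any g-base $\{\G_i\}$ of the fiber group, and any additional relations arise from the filled-in singular fibers over the points $x_j \in N$.

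Second, I would identify the relation contributed by each $x_j$ via the braid monodromy $\vp(\G_j)$: a meridian loop around $x_j$ acts on the fiber group as the automorphism induced by $\vp(\G_j)$, and attaching back the missing fiber forces $\G_i = \vp(\G_j)_* \G_i$ for all $i$. To transport these relations into the chosen g-base at $u$, one carries the local skeleton $\lambda_{x_j}$ from the nearby fiber over $x_j'$ along a path to $u$ passing \emph{below} each preceding critical value $x_m$ with $1 \le m \le j-1$; the corresponding Lefschetz diffeomorphism is precisely the composition $\prod_{m=j-1}^{1} \delta_{x_m}$, and applying it to $\lambda_{x_j}$ produces the global skeleton $\lambda'_{x_j}$ in the reference fiber over $u$.

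Third, I would read off the actual relation from a local model around each singular point. At a node the local monodromy is a half-twist along $\lambda'_{x_j}$, whose Artin-style action on the free group of the fiber yields the commutation $a_1 a_2 = a_2 a_1$; at a branch point of the conic (local equation $y^2 = x$) the two sheets are identified by the monodromy, giving $a_1 = a_2$; and at a transverse meeting of $k$ smooth branches the local monodromy is the full twist on the $k$ strands labelled by $\lambda'_{x_j}$, whose fiber-group action produces exactly the $k-1$ cyclic equalities
$$a_k a_{k-1} \cdots a_1 = a_1 a_k a_{k-1} \cdots a_2 = \cdots = a_{k-1} a_{k-2} \cdots a_1 a_k.$$
The part I expect to be hardest, and which constitutes the core of the original Zariski--van Kampen argument, is proving that no further relations appear: this requires showing that $\CC^2 - C$ is obtained from $p^{-1}(\ell - N)$ by attaching one $2$-cell per critical value (so exactly the meridians $\G_j$ are killed, and only the relations above are imposed) and that no extra contribution comes from $\pi_2$ of the base.
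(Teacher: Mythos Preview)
The paper does not prove this theorem at all: it is stated as the classical Zariski--van Kampen theorem with a citation to \cite{vK}, and the paper only explains afterwards how to read off the elements $a_i$ from a skeleton. There is therefore no ``paper's own proof'' to compare against.

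That said, your outline is a faithful sketch of the standard argument. The fibration $p:\CC^2-C\to\ell-N$, the short exact sequence of the fiber bundle, the surjectivity of the inclusion-induced map on $\pi_1$, and the identification of the relation at each singular fiber via the local braid monodromy transported along $\prod_{m=j-1}^{1}\delta_{x_m}$ are exactly the ingredients one needs. Your local models are correct: half-twist at a node gives commutation, the $y^2=x$ monodromy at a branch point gives equality, and the full twist on $k$ strands at a multiple point gives the cyclic relations. You are also right that the delicate point is completeness of the relations; this is usually handled by showing that a tubular neighborhood of each singular fiber contributes exactly one $2$-cell (or, equivalently, that the relevant meridian in $\pi_1(\ell-N,u)$ becomes trivial in $\pi_1(\ell,u)$ and this is the only new relation), and that $\pi_2$ of the base line vanishes so the exact sequence yields a presentation rather than merely generators. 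Nothing in your sketch is wrong; it simply goes beyond what the paper itself supplies.
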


\begin{notation}
The relations
$$a _k a_{k-1} \cdots  a _1 = a _1 a _k a_{k-1} \cdots   a _2 = \cdots =a _{k-1} a _{k-2} \cdots a _1 a _k,$$
will be denoted in an abbreviated form as:
$$
[a_1,a_2,\ldots,a_k]=e.
$$
\end{notation}

We start by describing the $a_i$'s in the case that the skeleton is a path
connecting two points, i.e. the singular point is either a node or a branch point.
Let $D$ be a disk circumscribing the skeleton, and let $K$ be the set of points.
 Choose an arbitrary point on the path and `pull' it down to $\partial D$,
splitting the path into two parts, which are connected at one end
to $u_0 \in \partial D$ and at the other end to the two endpoints of the path in $K$.
The loops associated to these two paths are elements in the group
$\pi_1 (D-K,u_0)$ and we call them $a_1$ and $a_2$. The corresponding
elements commute (in the case of a node) or equal (in the case of a branch point) in the fundamental group of the arrangement's complement. Figure \ref{av_bv} illustrates this procedure.

\begin{figure}[!ht]
\epsfysize 6cm
\epsfbox{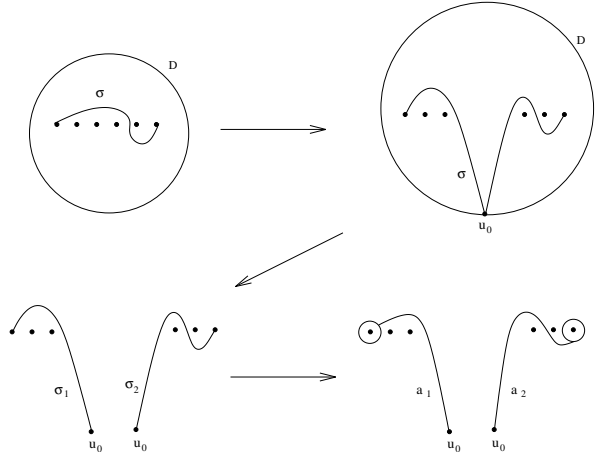}
\caption{Construction of $a_1,a_2$ for a node or a branch point.}\label{av_bv}
\end{figure}

Now we show how to write $a_1$ and $a_2$ as words in the
generators
$\{\Ga_1, \dots, \Ga_\ell\}$ of $\pi _1(D-K,u_0)$. We start
with the generator corresponding to the endpoint of
$a_1$ (or $a_2$), and conjugate it as we move along $a_1$ (or
$a_2$) from its endpoint in $K$ to $u_0$ as follows: for every
point $i \in K$ which we pass from above, we conjugate by $\Ga_i$
while moving from left to right, and by $\Ga_i^{-1}$ while moving
from right to left.

For example, in Figure \ref{av_bv},
$$a_1 = \Ga_3 \Ga_2 \Ga_1 \Ga_2^{-1} \Ga_3^{-1}, \quad a_2 = \Ga_4 ^{-1} \Ga_6 \Ga_4.$$
Assuming that the singular point is a node, the induced relation is the following commutative relation:
$$\Ga_3 \Ga_2 \Ga_1 \Ga_2^{-1} \Ga_3^{-1} \cdot \Ga_4 ^{-1} \Ga_6 \Ga_4 =\Ga_4 ^{-1} \Ga_6 \Ga_4 \cdot \Ga_3 \Ga_2 \Ga_1 \Ga_2^{-1} \Ga_3^{-1}.$$

One can check that the induced relation is independent of the point in
which the path is split.

\bigskip

For an intersection point of multiplicity $k$, we compute the elements in the
group
$\pi_1 (D-K,u_0)$ in a similar way, but the induced relations are
 of the following cyclic type:
$$a _k a_{k-1} \cdots  a _1 =
 a _1 a _k  a_{k-1}\cdots   a _2 = \cdots =
 a _{k-1} a _{k-2} \cdots a _1 a _k.$$
For computing the $a_i$'s, we choose an
arbitrary point on one of the paths and pull it down to
$u_0$.
For each of the $k$ points of the skeleton, we generate the
loop associated to the path from
$u_0$ to that point, and translate this path to a word in
$\G_1,\dots,\G_\ell$ by the procedure described above (note that while computing the word corresponding to $a_j$, the path $a_j$ is considered as being below the final points of $a_i$, $i \neq j$).

\begin{figure}[!ht]
\epsfysize 6cm
\epsfbox{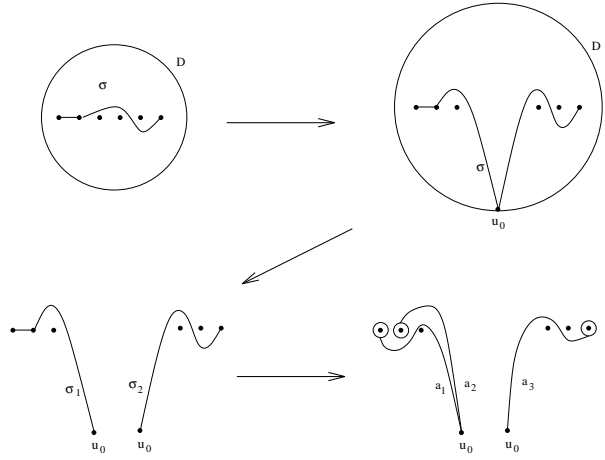}
\caption{Construction of $a_1,a_2,a_3$ for a multiple intersection point}\label{av_bv_mul}
\end{figure}

In the example given in Figure \ref{av_bv_mul}, we have:
$$a_1 =
\G_3 \G_1 \G_3^{-1}, a_2 = \G_3 \G_2
\G_3^{-1} \mbox{ and } a_3 = \G_4^{-1} \G_6 \G_4,$$ so the induced relations are:
\begin{eqnarray*}
\G_4^{-1} \G_6 \G_4 \cdot \G_3 \G_2 \G_3^{-1} \cdot \G_3 \G_1
\G_3^{-1} & = & \G_3 \G_1 \G_3^{-1} \cdot \G_4^{-1} \G_6 \G_4 \cdot
\G_3 \G_2 \G_3^{-1}\\
& = & \G_3 \G_2 \G_3^{-1} \cdot \G_3 \G_1
\G_3^{-1} \cdot \G_4^{-1} \G_6 \G_4.
\end{eqnarray*}

\subsection{The conjugation-free property} \label{sec_conj_free_prop}

 In this section, we define the notion of a {\it conjugation-free geometric presentation} for the fundamental group of line and CL arrangements, following the  definition given in \cite{EGT1}:
\begin{definition}\label{CFGP-CLArr1}
Let $G$ be a fundamental group of the affine or projective complements of a real CL  arrangement with $k$ lines and $n$ conics (where $k>0$ and $n \geq 0$). We say that $G$ has {\em a conjugation-free geometric presentation} if $G$ has a presentation with the following properties:
\begin{itemize}
\item In the affine case, the generators $\{ x_1,\dots, x_{k+2n} \}$ are the meridians of lines and conics at $\C^1_u = \pi^{-1}(u)$, and therefore there are $k+2n$ generators.
\item In the projective case, the generators are the meridians of lines and conics at $\C^1_u = \pi^{-1}(u)$ except for one, and therefore there are  $k+2n - 1$ generators.
\item In both cases, the induced relations are of the following types:
$$x_{i_t} x_{i_{t-1}} \cdots x_{i_1} = x_{i_{t-1}} \cdots x_{i_1} x_{i_t} = \cdots = x_{i_1} x_{i_t} \cdots x_{i_2}$$
induced by an intersection point of multiplicity $t$, or
$$x_{i_1}=x_{i_2},$$
induced by a branch point,
where $\{ i_1,i_2, \dots , i_t \} \subseteq \{1, \dots, m \}$ is an increasing subsequence of indices,
where $m=k+2n$ in the affine case and $m=k+2n-1$ in the projective case. Note that if $t=2$ in the first type, we get the usual commutator.
\item In the projective case, we have an extra relation that a specific multiplication of all the generators is equal to the identity element.
\end{itemize}
Note that in each case we claim that with respect to  particular choices of the reference line $\ell$ (i.e. the line to which we project the arrangement), the point $u$ (the basepoint for both  the meridians in the fiber $\CC^1_u$ and the loops in the group $\pi_1(\ell-N,u)$)
and the projection point $O$, we have this conjugation-free property.
\end{definition}

\begin{remark}  \label{remProveLater}\emph{
%Later we prove that the choice of the basepoint $u$ is irrelevant (see Lemma \ref{lemAddLineComProof}) under some restrictions. Meanwhile,
In the model we work with, the reference line $\ell$ is $\ell = \{y = a\}, a\ll 0$, where $\ell$ is chosen to be below all the real singular points of the arrangement, the projection in $\CC^2$ is $(x,y) \rightarrow x$ (i.e. in $\CP^2$, the point $O$ is $(1:0:0)$) and the point $u \in \ell$ is always a real point, i.e. $u = (u_x,a)$, where $u_x \in \RR$.}
\end{remark}

As will be proven below, for certain families of line arrangements the property of having a  conjugation-free geometric presentation is independent
of the choice of $u$. We thus conjecture the following:

\begin{conjecture}
The property of having a conjugation-free geometric presentation is independent of the choices of $O$, $\ell$ and $u \in \ell$.
\end{conjecture}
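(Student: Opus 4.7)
The plan is to establish the conjecture by reducing invariance under the triple $(O,\ell,u)$ to three separate invariance statements, and then arguing each by a connectedness/continuous-deformation argument in the relevant parameter space. Throughout, the essential observation is that the fundamental group $\pi_1(\C^2-\A,u)$ is an intrinsic topological invariant of the complement, so the issue is never the group itself but whether a \emph{particular} geometric presentation admits the conjugation-free form. Thus the whole problem is about tracking how \emph{meridian generators} and the Zariski--van Kampen relations transform as $(O,\ell,u)$ varies.

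First, I would fix $(O,\ell)$ and prove independence with respect to $u\in\ell-N$. Two choices $u_0,u_1$ are joined by a non-selfintersecting path $T\subset\ell-N$, and the associated Lefschetz diffeomorphism $\psi_T$ (Definition \ref{defLefdif}) transports the meridian g-base at $u_0$ to a g-base at $u_1$. Because $\psi_T$ acts by a product of half-twists on the fiber, it sends each meridian $\Gamma_i$ at $u_0$ to a meridian (up to conjugation) at $u_1$. The key step is to show that, starting from a conjugation-free presentation at $u_0$, the transported relations at $u_1$ can be rewritten in conjugation-free form. This reduces to an algebraic lemma: in a group defined by relations of types $[x_{i_1},\ldots,x_{i_t}]=e$ and $x_{i_1}=x_{i_2}$ (for branch points), any simultaneous conjugation of all generators by a fixed element preserves the form, and any reindexing induced by a half-twist can be absorbed into the choice of subsequence.

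Second, I would prove independence with respect to $\ell$ for fixed $O$. The set of lines in $\CP^2$ that are generic with respect to $\overline\A$ (and pass through $O$, or are in a fixed affine chart) is an open dense subset of a connected variety, hence path-connected. Along a generic path $\ell_t$ of reference lines, the braid monodromy varies continuously except at finitely many walls where two singular fibers collide or where a singular point becomes vertical. Away from walls, the presentation changes by a global conjugation or relabeling, which preserves conjugation-freeness by the argument in the previous paragraph. At a wall, one must show the two presentations on either side are related by a sequence of Tietze transformations that preserve the conjugation-free form. A completely analogous argument handles independence with respect to the projection point $O$, since the space of generic $O\in\CP^2$ is also connected and path-connected.

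The main obstacle, and the reason this remains a conjecture rather than a theorem, is the analysis at the walls. When the combinatorics of the Lefschetz pairs $(\lambda_{x_j},\delta_{x_j})$ changes, the skeletons $\lambda'_{x_j}$ can be replaced by quite different paths, and the a priori appearance of conjugators in the words $a_i=\Gamma_{j_\ell}^{\pm 1}\cdots\Gamma_{j_1}\Gamma_i\Gamma_{j_1}^{-1}\cdots\Gamma_{j_\ell}^{\mp 1}$ need not simplify in any obvious way. One promising approach is to combine the key lemma of Section \ref{secRealLineArr} (preservation of the conjugation-free property when drawing an additional line through one multiple point) with a Hironaka-style resolution of the wall-crossing as a finite sequence of such elementary moves, so that each elementary transition is already covered by the combinatorial results proved earlier in the paper. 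A cleaner but harder alternative would be to give a purely group-theoretic characterization of conjugation-free geometric presentations (say, in terms of existence of a g-base of meridians satisfying only cyclic commutation or identification relations) that is manifestly independent of $(O,\ell,u)$.
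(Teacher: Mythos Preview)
The statement you are addressing is a \emph{conjecture} in the paper, not a theorem: the paper gives no proof of it. What the paper actually establishes is much more limited --- independence of the (real) basepoint $u\in\ell-N$ for the restricted class of arrangements that can be built by adding one line at a time through at most one existing intersection point (Lemma \ref{g3_op_CFpreserve} and its CL analogue Lemma \ref{g3_op_CFpreserve_CL}). There is no claim or argument in the paper for independence under change of $\ell$ or $O$, and even the $u$-independence is not proved in general.

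Your outline is a reasonable strategy, and you are right to flag the wall-crossing as the main obstruction, but there is a further genuine gap already in your first step. The Lefschetz diffeomorphism $\psi_T$ does \emph{not} act on the meridians by a simultaneous global conjugation or a mere reindexing: it acts by a Hurwitz move, sending individual generators $\Gamma_i$ to conjugates such as $\Gamma_a'^{-1}\cdots\Gamma_{a+s-1}'^{-1}\Gamma_{a+s}'\Gamma_{a+s-1}'\cdots\Gamma_a'$ (see Remark \ref{remG3_Action} and equation (\ref{eqnIsoF3})). Showing that the transported relations can be rewritten in conjugation-free form is exactly the nontrivial content of Steps \ref{lemConjFreeLine}--\ref{lemConjFreeLine5}, and those arguments use the inductive hypothesis on $\LL'$ and the specific combinatorics of adding a single line in an essential way. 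Your proposed ``algebraic lemma'' --- that such rewriting is automatic --- is false in general; if it were true, every real line arrangement would be conjugation-free, contradicting e.g.\ the Ceva arrangement (Example \ref{exampCFG}(3)). So the conjecture is genuinely open, and your proposal does not close it.
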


\begin{remark}\emph{
The notion of a conjugation-free geometric presentation for the fundamental group can be generalized to any arrangement of plane curves (with the proper modifications with respect to the degrees of the curves and the types of singularities).}
\end{remark}

Note that the importance of the family of CL arrangements whose fundamental group has a conjugation-free geometric presentation is that such a presentation of the fundamental group can be read directly from the arrangement without any additional computation, and thus depends on the combinatorics of the arrangement.

\medskip

The goal of the next two sections is to give a complete proof to the following proposition:

\begin{prs} \label{lemAddLineComProof}
(1) Let $\LL$ be a real line arrangement such that  $\pi_1(\CC^2 - \LL,u)$ has a conjugation-free geometric presentation \emph{for any real basepoint} $u \in \ell - N$ (where $N$ is the set of the projection of singular points with respect to the projection $\pi$). Let $L$ be a real line not in $\LL$ that passes through a single intersection point of $\LL$. Then  $\pi_1(\CC^2 - (\LL \cup L),u)$ has a conjugation-free geometric presentation for any real basepoint $u$.

(2) Let $\A$ be a real CL arrangement with one conic such that  $\pi_1(\CC^2 - \A,u)$ has a conjugation-free geometric presentation \emph{for any real basepoint} $u \in \ell - N$. Let $L$ be a real line not in $\A$ that passes through a single intersection point of $\A$ such that $\beta(\A \cup L) = 0$. Then  $\pi_1(\CC^2 - (\A \cup L),u)$ has a conjugation-free geometric presentation for any real basepoint $u$.
\end{prs}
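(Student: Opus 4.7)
The plan is to apply the Zariski--van Kampen theorem (Theorem \ref{vk_thm}) to the enlarged arrangement and to verify, by a careful analysis of the skeletons and Lefschetz diffeomorphisms at each new singular point, that the resulting presentation can be written in the form required by Definition \ref{CFGP-CLArr1}. Throughout I would work in the standard model of Remark \ref{remProveLater}: the reference line $\ell = \{y=a\}$ with $a\ll 0$ below all real singularities, the projection $(x,y)\mapsto x$, and a real basepoint $u=(u_x,a)$.

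First I would isolate the combinatorial change caused by inserting $L$. Let $p$ be the unique intersection point of the original arrangement that $L$ meets. Then the $\pi$-singular points of $\LL\cup L$ (resp.\ $\A\cup L$) split into three types: the unchanged old singularities not involving $p$; the point $p$, whose multiplicity increases by one; and the new nodes of $L$ with the other components (in the CL case, possibly including up to two new nodes of $L$ with the conic, which are still nodes by Assumption \ref{assume}). The hypothesis $\beta(\A\cup L)=0$ in part~(2) guarantees that no new edge added to the graph closes a cycle. By perturbing the slope of $L$ within its combinatorial type, I may further assume that no new singular point shares an $x$-coordinate with an old one and that the branch-point fibers of the conic are not disturbed.

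Next I would insert the meridian $\gamma_L$ of $L$ into the $g$-base of the fiber $\C^1_u \cap (\LL\cup L)$ at the slot determined by the slope of $L$, keeping the other meridians $\gamma_i$ in place. The old singular points still induce their old (conjugation-free) relations, since in the enlarged fiber their skeletons are obtained by inserting a passive strand for $L$ that is not moved by the local half-twists. The substance of the proof is therefore the analysis of the new singular points: for the enlarged point $p$ one must show that the Lefschetz transport acts on the local skeleton by a permutation only, yielding a cyclic relation $[a_1,\dots,a_{m(p)+1}]=e$ among the unconjugated meridians of the components through $p$; and for each new node of $L$ with an old component one must show that the induced relation is a bare commutator of two unconjugated meridians.

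The step I expect to be the main obstacle is precisely this verification of the Lefschetz transport, because \emph{a priori} the half-twists $\delta_{x_j}$ at intermediate old singularities can drag the new strand across other strands and thereby introduce conjugations. Here the hypothesis is used essentially: it supplies a conjugation-free presentation \emph{for every} real basepoint, not just one. By sliding $u$ across subsets of the original singular points and comparing the conjugation-free presentations obtained at each position, one extracts a rigidity statement on how the transport braids act on the skeletons lying along the real axis; feeding $\gamma_L$ through this rigidity then forces all potential conjugations in the new relations to cancel. Part~(2) requires no additional ingredient beyond tracking the two meridians of the conic through the same argument, which is possible because $L$ is a line whose slope can be chosen so that no branch-point fiber of the conic contains a new intersection with $L$.
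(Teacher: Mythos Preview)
Your outline has the right broad shape but contains two genuine gaps that the paper's argument is specifically designed to close.

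First, the assertion that the strand for $L$ is ``passive'' and ``not moved by the local half-twists'' is false in general. Every intersection of $L$ with an old component is a singular point whose Lefschetz diffeomorphism $\delta$ does move the $L$-strand, and for a generic slope of $L$ these $\delta$'s are interspersed arbitrarily among the old ones, so the transported skeletons of old singularities can genuinely acquire conjugations by the new meridian. The paper handles this by first rotating $L$ (via the operations $h_1$, $h_2$, and in the CL case $h_4$, each of which is shown separately to preserve conjugation-freeness) until $L$ is almost vertical; then all intersection points on $L$ lie in a single narrow strip, and the composite braid over that strip moves the $L$-strand monotonically from one extreme position to the other while acting on the remaining strands exactly as the single braid $\Delta\langle v+1,v+m-1\rangle$ of the old multiple point $p$ did in $\LL'$. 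This is the content of Steps \ref{lemConjFreeLine} and \ref{lemConjFreeLine2} and is not automatic. Without this normalization your ``same old skeletons plus a passive strand'' claim does not hold.

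Second, your appeal to a ``rigidity statement'' obtained by sliding $u$ is too vague to carry the argument for \emph{all} real basepoints. The paper establishes this by an explicit induction (Steps \ref{lemConjFreeLine4} and \ref{lemConjFreeLine5}): moving $u$ across a single singular point $q_i$ induces the Hurwitz-type isomorphism $f_3'$ of Equation (\ref{eqnf3'}), and one must check case-by-case (depending on whether some line through $q_i$ also passes through $p$) that the images of the already-simplified relations can again be simplified using the relations $[\Gamma'_a,\ldots,\Gamma'_b]=e$ induced by $q_i$. In part (2) this step is strictly harder, because a line \emph{and} the conic can both pass through $p$ and $q_i$, so two generators in the relation for $p$ are affected simultaneously; here the hypothesis $\beta(\A\cup L)=0$ is used essentially (Step \ref{prop_commute}) to prove that the conic's generator commutes with every line generator, which is what makes the extra conjugations collapse. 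Your proposal that part (2) ``requires no additional ingredient'' misses this.
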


%Similar claims can be made regarding the projective fundamental group $\pi_1(\CC\PP^2 - (\A \cup L),u)$.

\medskip

Although a proof to Proposition \ref{lemAddLineComProof}(1) was already given in \cite[Proposition 2.2]{EGT2}, it is not a complete proof: the authors of
\cite{EGT2} assume implicitly that certain operations (such as a rotation of the arrangement, moving a singularity through ``infinity" or a rotation of a line in the arrangement), that are performed on a line arrangement $\LL$, preserve the conjugation-free
property. Indeed, while these operations induce \emph{isomorphic} fundamental groups (as partially proved in \cite{GTV}), one still has to prove that if $\beta$
is such an operation and if $\pi_1(\CC^2 - \LL)$ has a conjugation-free geometric presentation with respect to the indicated model above
(see Definition \ref{CFGP-CLArr1} and Remark \ref{remProveLater}), then $\pi_1(\CC^2 - \beta(\LL))$ has also a conjugation-free geometric presentation with respect to the \emph{same model}.

\section{Preservation of Conjugation-Freeness: The case of real line arrangements}\label{secRealLineArr}

The goal of this section is to give a complete proof to Proposition \ref{lemAddLineComProof}(1) (which corrects the proof given in \cite{EGT2}): given a  real line arrangement $\LL$ whose fundamental group has a conjugation-free presentation, then, under certain conditions, passing a real line $L$ through at most one singular point of $\LL$, the new arrangement $\LL \cup L$
has the same property of conjugation-freeness. We divide the proof  into a sequence of lemmata, as
we have to prove that the following operations preserve the conjugation-free property:

 %In order to complete the proof given in \cite[Proposition 2.2]{EGT2},

\begin{itemize}
\item $h_1 : $ given a line $\ell \in \LL$ that passes through a single intersection point $p \in \LL$, rotate the line around $p$ as long as it does not coincide with a different line.

\item $h_2 : $ given a line $\ell \in \LL$ that passes through a single intersection point $p \in \LL$, move  the line over a different line $\ell'$ that passes also through $p$ and through another multiple point (i.e. the deformation takes place in $\CC^2$, see  Figure \ref{triangle-line}).

\begin{figure}[!ht]
\epsfysize 3cm
\epsfbox{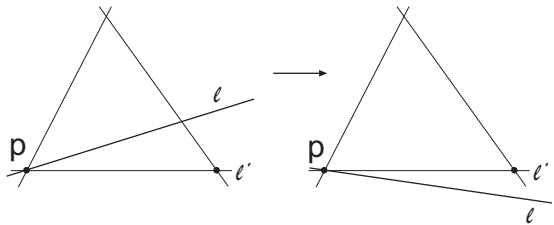}
\caption{Moving a line $\ell$ over another line $\ell'$.}\label{triangle-line}
\end{figure}

 \item $h_3 : $ Changing the basepoint $u \in \ell - N$ when computing $\pi_1(\CC^2 - \LL,u)$.
\end{itemize}

Having all these operations preserving the conjugation-free property, we can indeed claim (as was claimed in the proof
of \cite[Proposition 2.2]{EGT2}) that after adding the new  line, we still have that the fundamental  group of the new arrangement has a conjugation-free geometric presentation.

\begin{remark}
\rm{(1) Comparing to \cite{EGT2}, we reduced the number of operations (performed on $\LL$) preserving the conjugation-free property. We are not
proving that a rotation (of the whole arrangement) or the moving of a singular point through ``infinity" preserve
the conjugation-free property; instead of these operations, we only prove that changing the real basepoint $u \in \ell - N$
preserves this  property. However,  Lemma \ref{rot_op_CFpreserve} below deals with  the preservation of the conjugation-free property
during rotating certain classes of line arrangements.

(2) Note also that it is nor clear if the operation of moving a singular point through ``infinity", that was presented in \cite{GTV} in the context of wiring diagrams, can be applied for line arrangements. Indeed, if one of the lines that goes through an intersection point $p$, goes over also through many other multiple intersection points, it is not clear that one can move $p$ through ``infinity" without changing the intersection lattice with respect to this line.}
\end{remark}

Thus, given a real line arrangement $\LL$, we have to answer the following two questions: first, what is the explicit isomorphism
$f_i: \pi_1(\CC^2 - \LL) \rightarrow \pi_1(\CC^2 - h_i(\LL))$ for $1 \leq i \leq 3$? and second, does this isomorphism preserve
the conjugation-free property?

\begin{remark}
Note that from now on, whenever we refer to the basepoint $u \in \ell$, it is implicitly assumed that $u$ is real.
\end{remark}

\begin{lemma} \label{g1_op_CFpreserve}
The operation $h_1$ preserves the conjugation-free property.
\end{lemma}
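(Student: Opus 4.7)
My plan is to track the braid monodromy presentation through the continuous rotation and show that its conjugation-free structure is preserved. Using the standing model from Remark \ref{remProveLater}, I write the rotation as a continuous family $\LL_t$, $t \in [0,1]$, where the singular set of $\LL_t$ decomposes into three parts: the multiple point $p$ (with fixed $x$-coordinate), the singular points $S_0$ of $\LL$ not lying on $\ell$ (whose $x$-coordinates are also fixed), and the moving nodes $q_j(t) = \ell_t \cap \ell'_j$ for $\ell'_j \in \LL \setminus \{\ell\}$ not through $p$, whose $x$-coordinates vary continuously with $t$. Note that $q_j(t)$ cannot collide with $p$ during the rotation, since by hypothesis each $\ell'_j$ giving a genuine moving node does not pass through $p$.

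The interval $[0,1]$ is partitioned by finitely many critical parameter values at which either the $x$-coordinate of a moving node $q_j(t)$ coincides with that of another singular point, or $\ell_t$ becomes vertical. Between consecutive critical values, the Lefschetz diffeomorphisms $\delta_{x_i}$ used to transport skeletons to the base fiber deform by a fiberwise isotopy, so the induced braid monodromy presentation is the same word-by-word; hence only the behavior at the critical values requires argument. When a moving node $q_j(t)$ crosses the $x$-coordinate of another singular point $s$, the key observation is that the skeleton at $q_j$ is a single arc in the fiber and the induced relation is a commutator $[x_{\ell_t}, x_{\ell'_j}] = e$. A local analysis of the Lefschetz transport, following the formalism of Section \ref{bm_sec}, then shows that the commutator structure at $q_j$ and the cyclic structure at $s$ are both preserved, up to the reindexing induced by an elementary swap of two adjacent strands in the fiber $\CC^1_u$.

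The hard part will be verifying that this reindexing keeps the indices of every relation in the increasing-subsequence form required by Definition \ref{CFGP-CLArr1}, especially when a moving node $q_j$ crosses the $x$-coordinate of a multiple point $s \in S_0 \cup \{p\}$ at which a cyclic relation is imposed. I plan to handle this by explicit bookkeeping on the local skeleton: when two adjacent strands are swapped globally in the fiber, the indices of the two corresponding generators are transposed uniformly across all relations, which preserves the increasing-subsequence property within each relation individually. The remaining degenerate event, where $\ell_t$ becomes vertical (so the projection is no longer generic on $\ell_t$), is avoided by a small perturbation of the reference line $\ell_{\mathrm{ref}}$; the resulting change in the real basepoint is then absorbed using the change-of-basepoint invariance furnished by operation $h_3$, which is treated in a subsequent lemma.
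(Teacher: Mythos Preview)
The paper's proof is a single paragraph: it cites \cite[Theorem 4.13]{GTV}, which already establishes that the isomorphism $f_1:\pi_1(\CC^2-\LL,u)\to\pi_1(\CC^2-h_1(\LL),u)$ induced by the rotation is the identity on geometric generators, $\Gamma_j\mapsto\Gamma'_j$. Since the conjugation-free relations are words in the $\Gamma_j$ without conjugations, their images under $f_1$ are the same words in the $\Gamma'_j$, and nothing more is needed.

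Your proposal takes a genuinely different route: instead of invoking \cite{GTV}, you attempt to reprove the invariance directly by tracking the braid monodromy through the continuous family $\LL_t$, analyzing the finitely many critical events where a moving node's $x$-coordinate crosses that of another singular point. This is a reasonable strategy in spirit, but as written it has two real gaps.

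First, there is a circularity problem. To handle the degenerate event where $\ell_t$ becomes vertical, you perturb the reference line and then appeal to the basepoint-change operation $h_3$. But Lemma \ref{g3_op_CFpreserve} (which treats $h_3$) explicitly uses Lemma \ref{g1_op_CFpreserve} in its proof: in Step \ref{lemConjFreeLine} the added line $L$ is first placed in near-vertical position and then rotated back \emph{using $h_1$}. So you cannot invoke $h_3$ here without circular reasoning.

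Second, the core step is hand-waved. You assert that when a moving node $q_j$ crosses the $x$-coordinate of a multiple point $s$, ``a local analysis of the Lefschetz transport \ldots\ shows that the commutator structure at $q_j$ and the cyclic structure at $s$ are both preserved, up to the reindexing induced by an elementary swap of two adjacent strands.'' This is precisely the nontrivial content: when the moving node and the multiple point exchange order, the Lefschetz diffeomorphism $\delta_{q_j}$ now acts (or ceases to act) on the skeleton of $s$, and one must check that no conjugation is introduced into the relation at $s$. Saying the net effect is ``an elementary swap'' is essentially asserting the conclusion of \cite[Theorem 4.13]{GTV} without proving it. If you want to avoid citing \cite{GTV}, this computation must be carried out explicitly; otherwise, citing it collapses your argument to the paper's one-line proof.
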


\begin{proof}
Let $\LL$ be a real line arrangement. The operation $h_1$ - rotating a line over a multiple point - and its effects on the fundamental group is described in \cite[Section 4.5]{GTV}. In \cite[Theorem 4.13]{GTV} it is proven that the isomorphism $f_1: \pi_1(\CC^2 - \LL,u) \rightarrow \pi_1(\CC^2 - h_1(\LL),u)$ is given by $\G_j \mapsto \G'_j$, where $\G_j$ (resp. $\G_j'$) are the geometric generators for $\pi_1(\CC^2 - \LL)$ (resp. $\pi_1(\CC^2 - h_1(\LL))$) and $u \in \ell - N$ is a basepoint located to the right of all the points in $N \subset \ell$. Since $f_1$ sends generators to generators, it is obvious that the relations in $\pi_1(\CC^2 - h_1(\LL))$ have no conjugations, and hence the new presentation is still conjugation-free.
\end{proof}

\begin{lemma} \label{g2_op_CFpreserve}
The operation $h_2$ preserves the conjugation-free property.
\end{lemma}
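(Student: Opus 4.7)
The plan is to mirror the approach of Lemma \ref{g1_op_CFpreserve}: describe the isomorphism $f_2 : \pi_1(\CC^2 - \LL, u) \to \pi_1(\CC^2 - h_2(\LL), u)$ explicitly on the geometric generators, and then verify relation by relation that it sends a conjugation-free presentation to a conjugation-free one. Let $p$ denote the single multiple point of $\LL$ through which $\ell$ passes, and let $q$ be the other multiple point on $\ell'$. Since the deformation $h_2$ is carried out in $\CC^2$ while keeping the intersection lattice of $\LL$ unchanged, a standard isotopy argument produces such an isomorphism; however, the real wiring diagrams of $\LL$ and $h_2(\LL)$ differ locally, because $\ell$ has crossed from one side of $\ell'$ to the other in a region near $\pi(p)$ and $\pi(q)$.

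Next, I would compare the two braid monodromies. The change in the wiring diagram is confined to a small interval of the reference line containing $\pi(p)$ and $\pi(q)$, and inside this interval only the strands for $\ell$ and $\ell'$ exchange order. Consequently, for every singular point $x_j \notin \{p,q\}$ the skeleton $\lambda'_{x_j}$ is literally the same in the two arrangements. On generators, I would define $f_2$ by $\G_i \mapsto \G'_i$ for lines other than $\ell$ and $\ell'$, and prescribe its action on $\G_\ell, \G_{\ell'}$ by the local half-twist realizing the exchange of strands; by construction $f_2$ sends meridians to meridians.

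It then remains to check the two potentially modified relations. At $p$, both $\LL$ and $h_2(\LL)$ have the same lines through the same multiple point, so the cyclic relation $[a_1,\ldots,a_t]=e$ reappears in the new presentation with at most the positions of $\G_\ell$ and $\G_{\ell'}$ interchanged; since each $a_i$ is a single generator in the presentation for $\LL$, the same holds for the image relation. At $q$, the generator $\G_\ell$ does not appear in the relation (since $\ell$ does not pass through $q$), but the wiring change may a priori conjugate the expression associated with $\G_{\ell'}$ by $\G_\ell$; the key point is that this conjugation is precisely canceled by the half-twist component built into $f_2$ on $\G_\ell, \G_{\ell'}$, using that the half-twist commutes with the meridians of all other lines.

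The main obstacle will be this last verification at $q$: making the skeleton $\lambda'_q$ fully explicit in both arrangements and tracking, via the Lefschetz diffeomorphisms $\delta_{x_m}$, exactly how the half-twist contributes to each $a_i$ at $q$. The calculation is local and should boil down to showing that the only difference between the two presentations at $q$ is a conjugation by $\G_\ell$ on the slot occupied by $\G_{\ell'}$, which is absorbed by $f_2$; once this is established, $f_2$ carries the conjugation-free presentation of $\pi_1(\CC^2 - \LL, u)$ to a conjugation-free presentation of $\pi_1(\CC^2 - h_2(\LL), u)$, and the lemma follows.
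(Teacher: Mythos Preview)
Your overall structure is right, but the mechanism you propose for handling the relation at $q$ is not the one that works, and the actual mechanism relies on a hypothesis you never invoke.

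First, the isomorphism $f_2$ is not generator-to-generator. In the paper's setup one rotates $\ell$ (call it $\ell_b$) close to $\ell'$ (call it $\ell_a$) so that their strands are adjacent, $b=a+1$, and then the deformation yields
\[
\G_b \mapsto \G'_a,\qquad \G_a \mapsto {\G'_a}^{-1}\G'_b\G'_a,\qquad \G_i \mapsto \G'_i \ (i\neq a,b).
\]
So the conjugation you are worried about at $q$ is \emph{produced} by $f_2$, not cancelled by it: the slot in the relation at $q$ that was occupied by $\G_a$ now carries ${\G'_a}^{-1}\G'_b\G'_a$. Your sentence ``the half-twist commutes with the meridians of all other lines'' is simply false at the level of the free group or the braid group, and nothing in the construction of $f_2$ absorbs this conjugate.

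What actually removes the conjugation is a feature of the \emph{relations}, and it uses precisely the assumption that $\ell=\ell_b$ passes through a single multiple point $p$. Hence $\ell_b$ meets every line through $q$ (other than $\ell_a$) in a node; by conjugation-freeness of $\pi_1(\CC^2-\LL,u)$ these nodes give relations $[\G_j,\G_b]=e$ for each such line $\ell_j$. Under $f_2$ these become $[\G'_j,\G'_a]=e$, i.e.\ the conjugating letter $\G'_a$ commutes with every other generator appearing in the relation at $q$, and the conjugate ${\G'_a}^{-1}\G'_b\G'_a$ can therefore be replaced by $\G'_b$ inside that relation. This is the step your proposal is missing; without it, you cannot conclude the image relation at $q$ is conjugation-free. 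A similar (purely algebraic) check is needed at $p$ itself, where both $\G_a$ and $\G_b$ occur and one must verify that $[\ldots,{\G'_a}^{-1}\G'_b\G'_a,\G'_a,\ldots]=e$ is equivalent to $[\ldots,\G'_a,\G'_b,\ldots]=e$.
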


\begin{proof}
Let $\LL$ be a real arrangement of $n$ lines, $\ell_b \in \LL$ a line that passes through a single multiple point $p \in \LL$ and $\ell_a \in \LL$ a different line  that passes through  $p$ and through another multiple point $p'$, see Figure \ref{RotateLineOverLine}(a).
 Note that $a,b \in \{1,\ldots,n\}$ are the numerations of the lines the fiber over the basepoint $u$.

 First, by Lemma \ref{g1_op_CFpreserve}, we can rotate the line
$\ell_b$ (where the center of rotation is $p$) till it is very close to $p'$, not coinciding with $\ell_a$, see Figure \ref{RotateLineOverLine}(b). Assuming that $x(p) \ll u$,  we note that while numerating the points in the fiber over $u$, $b = a+1$.
Then, we rotate the line $\ell_b$ over $\ell_a$, such that in the fiber $\CC^1_u$, the corresponding point $p_b$ will perform a $180^{\circ}$
clockwise rotation and the point $p_a$ will remain unchanged (see Figure \ref{RotateLineOverLine}(c)).

\begin{figure}[!h]
\epsfysize 6cm
\epsfbox{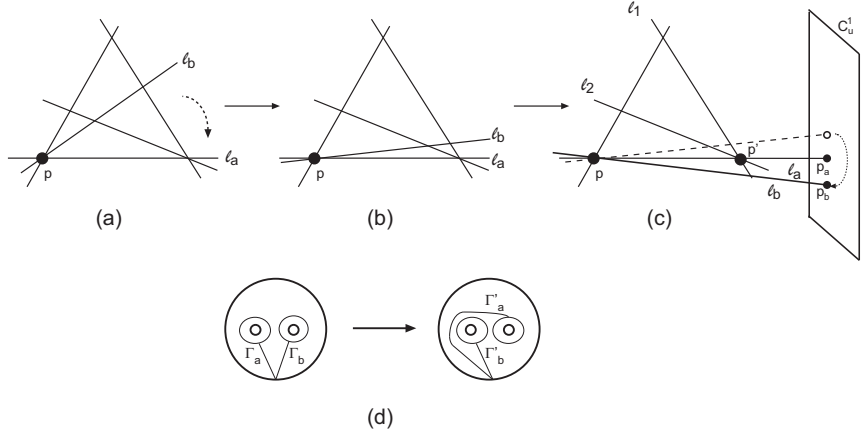}
\caption{The rotation of the line $\ell_b$ over another line $\ell_a$ (Parts (a)-(c)) and its effect on the geometric generators (Part (d)).}\label{RotateLineOverLine}
\end{figure}

The induced isomorphism $f_2: \pi_1(\CC^2 - \LL,u) \rightarrow \pi_1(\CC^2 - \h_2(\LL),u)$ is given by
the following map:
$$
\G_b \mapsto \G'_a,\,$$\begin{equation} \label{eqnRelAfterIso} \G_a \mapsto {\G'_a}^{-1} \G'_b \G'_a, \, \end{equation}$$ \G_i \mapsto \G'_i, \,\, i \neq a,b,$$
(see Figure \ref{RotateLineOverLine}(d)).
Note that the only relations in $\pi_1(\CC^2 - \h_2(\LL),u)$ that presumably have conjugations  are those
that  involved the generator  $\G_a$ in $\pi_1(\CC^2 - \LL,u)$, i.e.  the relations induced by the intersection points that are on $\ell_a$.

The relation induced by the point $p$ in $\pi_1(\CC^2 - \LL,u)$ is $$[\G_{i_1},\ldots,\G_{i_m},\G_a,\G_b,\G_{i_{n+3}},\ldots,\G_{i_k}]=e$$ (where $k$ is the multiplicity of $p$), which is
mapped  in $\pi_1(\CC^2 - \h_2(\LL),u)$ to:

\begin{equation}\label{eqnRelAfterIsoMoveLine}
[\G'_{i_1},\ldots,\G'_{i_m},{\G'_a}^{-1} \G'_b \G'_a,\G'_a,\G'_{i_{n+3}},\ldots,\G'_{i_k}]=e.
\end{equation}

Now, after the rotation of the line $\ell_b$, the numeration of the lines passing through $p$ is as follows:
  $$
  i_1,\ldots,i_m,a,b=a+1,i_{n+3},\ldots,i_k,
  $$ which means that we need to prove that  the following relation holds in $\pi_1(\CC^2 - \h_2(\LL),u)$:
$$[\G'_{i_1},\ldots,\G'_{i_m},\G'_a,\G'_b,\G'_{i_{n+3}},\ldots,\G'_{i_k}]=e.$$

 However, an easy check (i.e. expanding the brackets of relation (\ref{eqnRelAfterIsoMoveLine})) shows that this is indeed the  relation we get from
 relation (\ref{eqnRelAfterIso}). For example, for $k=3$, we have that
$$
[\G_{i_1},\G_a,\G_b]=e \,\,\rightarrow\,\, [\G'_{i_1},{\G'_a}^{-1} \G'_b \G'_a,\G'_a]=e.
$$
Therefore, the right hand side is equivalent to the following relations:
$$
{\G'_a}^{-1} \G'_b \G'_a \cdot \G'_{i_1} \cdot \G'_a  = \G'_{i_1} \cdot \G'_a  \cdot {\G'_a}^{-1} \G'_b \G'_a  \,\, \Rightarrow
$$
\begin{equation} \label{eqnRelh2}
 \G'_b \G'_a \G'_{i_1} = \G'_a \G'_{i_1} \G'_b
\end{equation}
and
$$
\G'_a \cdot {\G'_a}^{-1} \G'_b \G'_a \cdot \G'_{i_1} = {\G'_a}^{-1} \G'_b \G'_a  \cdot \G'_{i_1}  \cdot \G'_a \,\,\Rightarrow  \,\, \G'_a \G'_b \G'_a \G'_{i_1} = \G'_b \G'_a \G'_{i_1} \G'_a
$$
\begin{equation} \label{eqnRelh2_2}
\overset{\rm{Eqn.}\,\,\, (\ref{eqnRelh2})}{\Rightarrow}\,\, \G'_a \G'_b \G'_a \G'_{i_1} =  \G'_a \G'_{i_1} \G'_b \G'_a \,\,\Rightarrow \,\, \G'_b \G'_a \G'_{i_1} = \G'_{i_1} \G'_b \G'_a.
\end{equation}
Thus, relations (\ref{eqnRelh2}) and (\ref{eqnRelh2_2}) imply that $[\G'_{i_1},\G'_a,\G'_b]=e$.

\medskip

Let $p'$ be another intersection point on $\ell_a$ (see Figure \ref{RotateLineOverLine}(c)). Assume for simplicity that only two more lines $\ell_1,\ell_2$ pass through $p'$ and that the  relation induced by $p'$ in $\pi_1(\CC^2 - \LL,u)$ is $[\G_1,\G_a,\G_2]=e$. As $\ell_b$ passes through only one multiple point $p$, it intersects
$\ell_1,\ell_2$ transversally, and therefore the induced relations in $\pi_1(\CC^2 - \LL,u)$ (which do not have conjugations since the presentation is conjugation-free) are $[\G_1,\G_b]= [\G_2,\G_b]=e$.
Thus, in $\pi_1(\CC^2 - \h_2(\LL),u)$, the corresponding induced relations are:
$$
[\G'_1,\G'_a]= [\G'_2,\G'_a]=e
$$
and hence:
$$
[\G'_1,{\G'_a}^{-1} \G'_b \G'_a,\G'_2]=e \overset{[\G'_1,\G'_a]= [\G'_2,\G'_a]=e}{\Rightarrow} [\G'_1, \G'_b,\G'_2]=e.
$$

Therefore, the  relation induced by $p'$ has no conjugations; hence the new presentation is also conjugation-free, as needed.

Note that the last part of the proof is independent of the order of the generators (i.e. how the lines $\ell_1,\ell_2$ pass through $p'$)
and is similar when either only one line passes through the point $p'$ or more than three lines  pass through this point.
\end{proof}

We now treat the operation $h_3$. We start with a remark.

\begin{remark} \label{remG3_Action}
\rm{
%(I) Let $\LL$ be a real line arrangement, $p \in \LL$ the  right-most intersection point and $u \in \ell$ the basepoint for the meridians and for the
%loops in the group $\pi_1(\ell-N,u)$. As we look at $u$ as a real point, such that $u \in \ell$ is to the right of  $x(p)\, (= $ the $x$--coordinate of $p$), and since $p$ is the right-most point, we can assume that $u = x(p)+\epsilon$, where $0<\epsilon \ll 1$.
%Note that the operation $\h_3$, performed on the point $p$, is in fact equivalent, from the point of view of the effect on the geometric generators, to changing the basepoint from $u$ to $u' = x(p)-\epsilon$, see Figure \ref{changingBasePoint}.
%Therefore, we can  regard  the operation $h_3$  as changing the location of the basepoint $u$ before and after $p$.

(I) Let $p$ an intersection point of $\LL$.  Changing the basepoint from  $u = x(p)+\varepsilon$, where $0<\varepsilon \ll 1$, to $u' = x(p)-\varepsilon$ (see Figure \ref{changingBasePoint})
 induces an isomorphism on the geometric generators, which is described below.

\begin{figure}[!ht]
\epsfysize 4cm
\epsfbox{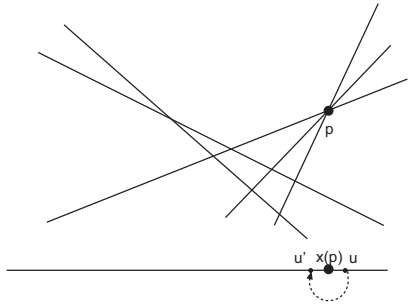}
\caption{Changing the location of the basepoint from the right of an intersection point to its left.}\label{changingBasePoint}
\end{figure}

  If the local skeleton at the point $p$ is composed of
 the lines numerated by $a,a+1,\ldots,a+s$, and the geometric generators of the group  $\pi_1(\CC^2 - \LL,u)$ (resp. $\pi_1(\CC^2 - \LL,u')$) are $\G_i$ (resp. $\G'_i$), then
 the isomorphism $f_3:\pi_1(\CC^2 - \LL,u) \rightarrow \pi_1(\CC^2 - \LL,u')$ is given by:

 \begin{center}
\begin{equation}\label{eqnIsoF3}f_3(\Ga _i) =
\left\{ \begin{array}{ccl}
{\Ga _i'\, }          &&  { 1 \leq i < a } \\
{{\Ga' _a} ^{-1} \cdots {\Ga'^{-1}_{a+s-1}} \Ga' _{a+s} \Ga'_{a+s-1} \cdots  \Ga_a'\,}  && {i=a} \\
{{\Ga' _a} ^{-1} \cdots {\Ga'^{-1}_{a+s-2}} \Ga_{a+s-1}' \Ga_{a+s-2}' \cdots \Ga _a'\,} && {i=a+1} \\
{\vdots} && {\,\,\,\,\,\,\vdots} \\
{{\Ga' _a} ^{-1} \Ga_{a+1}' \Ga _a'\,} && {i=a+s-1} \\
{\Ga _a'\, } && {i=a+s} \\
{\Ga _i' \,}          &&  {  a+s < i \leq \ell }
\end{array}
\right.
\end{equation}

\end{center}

(II) By the the isomorphism presented in (\ref{eqnIsoF3}), one can show that if $p$ is a node, then $f_3$ indeed preserves the conjugation-free property. Indeed, if the basepoint is  $u'$, the relation $[\G'_a,\G'_{a+1}]=e$ exists. In this case, the isomorphism $f_3$  is
$$
\G_a \mapsto \G'_{a+1},$$$$ \G_{a+1} \mapsto {\G'^{-1}_{a+1}}\G'_{a}\G'_{a+1} \overset{[\G'_a,\G'_{a+1}]=e}{=} \G'_{a}
$$
and since any geometric generator is sent to a geometric generator, $f_3$ preserves the conjugation-free property.}

\end{remark}

\begin{lemma} \label{g3_op_CFpreserve}
Let $\LL$ be a real line arrangement which can be presented as a union  $\LL = \LL' \cup L$, where $\LL'$ is a real line arrangement whose fundamental group has a conjugation-free geometric presentation
for \emph{every real} basepoint $u \in \ell-N$ and $L \not\in \LL'$ is a real line that passes through at most  one intersection point $p$ of $\LL'$.
 Then, on this class of real line arrangements,
the operation $\h_3$ preserves the conjugation-free property for $\LL$; that is, the fundamental group $\pi_1(\CC^2 - \LL,u)$ has the conjugation-free property for every real basepoint $u \in \ell-N$.
\end{lemma}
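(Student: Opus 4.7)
The plan is to reduce to an elementary move -- crossing the basepoint past a single point $q \in N$ -- and then to verify conjugation-freeness of the Zariski--van Kampen presentation at $u'$ case by case on the type of $q$. Any two real basepoints of $\ell - N$ can be joined by a sequence of real basepoints consecutive pairs of which lie on opposite sides of a single $q \in N$, so it is enough to handle one elementary move. The induced isomorphism $f_3$ on geometric generators is the one given by formula (\ref{eqnIsoF3}) in Remark \ref{remG3_Action}.

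The first case is $q$ a simple node of $\LL$: either a node of $\LL'$, or a node of the form $L \cap \ell'$ with $\ell' \in \LL'$ not passing through $p$ (such $\ell'$ exist since $L$ meets $\LL'$ transversally outside of $p$). Here Remark \ref{remG3_Action}(II) applies: the commutator relation at $q$, already part of the conjugation-free presentation at $u$, collapses (\ref{eqnIsoF3}) on the two generators at $q$ into the plain transposition $\Gamma_a \leftrightarrow \Gamma'_{a+1}$ and $\Gamma_{a+1} \leftrightarrow \Gamma'_a$, with every other generator fixed; the resulting relabeling carries each Zariski--van Kampen relation of $\LL$ at $u$ to a conjugation-free relation at $u'$.

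The second case is $q$ a multiple point of $\LL'$ distinct from $p$. Since $L$ passes through only $p$, the line $L$ is not involved in the local swap at $q$, and $f_3$ fixes the meridian $\gamma_L$ of $L$. The relations of $\LL$ at $u'$ that involve only lines of $\LL'$ are conjugation-free by the hypothesis on $\LL'$; the Zariski--van Kampen relation at $p$ in the new presentation is the cyclic relation on the lines through $p$ in the new fiber-order at $u'$, with $\gamma_L$ inserted at its (unchanged) index; and each commutator relation at a node $L \cap \ell_i$ is preserved as a commutator by the relabeling. The third case, $q = p$, is analogous but with $L$ itself participating in the local swap: the Zariski--van Kampen relation at $p$ at $u'$ is the conjugation-free cyclic relation $[\Gamma'_a, \dots, \Gamma'_{a+s}] = e$ in the reversed fiber-order, the commutator relations at the nodes $L \cap \LL'$ are again preserved as commutators under the relabeling, and the relations at the multiple points of $\LL'$ other than $p$ do not involve $\gamma_L$ and so coincide with the corresponding relations for $\LL'$ at $u'$, which are conjugation-free by the hypothesis.

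The main obstacle is the case $q = p$: specifically, pinning down the action of the Lefschetz diffeomorphism $\delta_p$ on a skeleton whose endpoints are exactly the positions $\{a, a+1, \dots, a+s\}$ of the lines through $p$ in the fiber. This is a purely local braid-monodromy computation in a neighborhood of $p$; the key fact is that $\delta_p$ sends the simple initial skeleton at $p$ to a simple skeleton on the reversed positions in the fiber at $u'$, confirming that conjugation-freeness persists.
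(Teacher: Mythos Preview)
Your reduction to elementary moves across a single $q \in N$ is the right shape, but there are two genuine gaps that prevent the argument from going through as written.

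First, the inductive scheme has no base case: you never establish that $\pi_1(\CC^2 - \LL, u_0)$ is conjugation-free for \emph{some} real basepoint $u_0$. The hypothesis is only on $\LL'$, not on $\LL$, so you cannot simply start the chain of elementary moves. The paper handles this by first rotating $L$ (via the operations $h_1,h_2$) to be almost vertical, and then proving directly (Steps~\ref{lemConjFreeLine} and~\ref{lemConjFreeLine2}) that at the two basepoints immediately adjacent to the extremal singular points on $L$, the skeletons of every singular point of $\LL'$ in the $\LL$-fiber agree with those in the $\LL'$-fiber up to a global index shift by one. This skeleton comparison is the technical heart of the lemma; it is what allows one to import the simplification process from $\LL'$ into $\LL$ (Part~(II) of Step~\ref{lemConjFreeLine}).

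Second, in your cases~2 and~3 you assert that ``the relations of $\LL$ at $u'$ that involve only lines of $\LL'$ are conjugation-free by the hypothesis on $\LL'$'', and that the relation at $p$ is ``the cyclic relation \ldots\ with $\gamma_L$ inserted''. Neither claim is automatic. The Zariski--van~Kampen relations in $\LL$ are computed in a fiber with one extra point and with extra Lefschetz diffeomorphisms coming from the nodes on $L$; identifying them with the $\LL'$-relations is precisely the skeleton comparison above, which only works after $L$ has been rotated to its special position. Moreover, when you cross a multiple point $q$ that shares a line with $p$, the isomorphism $f_3$ genuinely conjugates the generator of that shared line, so the relation at $p$ acquires conjugations that must be removed by hand. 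The paper does this explicitly in Step~\ref{lemConjFreeLine4}, claim~(3), case~(2): the conjugating generators correspond to lines through $q$ not through $p$, hence meeting $L$ in nodes, and the already-established commutators $[\G_1',\G_x']=e$ (claim~(2) there) let one strip the conjugations. Your sketch skips both the skeleton comparison and this simplification, so as written it does not prove the lemma.
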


\begin{remark} \label{rem_NotRest}
\emph{Though the condition on $\LL$ in Lemma \ref{g3_op_CFpreserve} seems restrictive, one should note that if an arrangement  $\LL$ can be built
by adding  one line that passes through at most  one intersection point at each step of the construction, then $\LL$ has eventually a conjugation-free geometric presentation, independent of the chosen real basepoint $u$ (such an arrangement has a conjugation-free graph, see Section \ref{secCFGraphs} for more details). Indeed, note that for the arrangement $\LL_0$
that has only nodes, $\pi_1(\CC^2 - \LL_0,u)$ is abelian and therefore every relation has no conjugations, which means that it is
independent of the basepoint $u$. Then, one can build $\LL$ from $\LL_0$ by adding only one line at  each step (for an exact definition of $\LL_0$ and an example of this process see the proof of Proposition \ref{prsOnlyCycle} and Section \ref{subsecCFGline}). Thus, by Lemma \ref{g3_op_CFpreserve}, proving that $\pi_1(\CC^2 - \LL,u)$ has a conjugation-free geometric presentation, independent of the location of the real basepoint $u$ on $\ell$.}
\end{remark}

\begin{proof} We remind again that whenever we refer to the basepoint $u$, we implicitly assume that $u \in \CC^2$ is real (in its two coordinates). Moreover, since we choose the reference  line $\ell$ as $\ell = \{y = a : a \in \RR \}, a \ll 0$, the $y$-coordinate of $u$ is fixed. Therefore, when we write, for example, $u = x(a) + \varepsilon$, we mean that $x(u) = x(a) + \varepsilon$. For the convenience of the reader, we omit the notation $x(u)$ and write only $u$.

First, note that our assumption implies that for every basepoint $u_0 \in \ell -N$, $\pi_1(\CC^2 - \LL',u_0)$  has a conjugation-free geometric presentation.

If  $L$ intersects every line of $\LL'$ transversally, then, if
$$\pi_1(\CC^2 - \LL',u_0) = \langle x_1,\ldots,x_k : R  \rangle$$ is a conjugation-free presentation (where $R$ is a complete set of relations, which have no conjugations), then, denoting by $z$ the generator corresponds to $L$ we have  by \cite{OkSa} that: $$\pi_1(\CC^2 - \LL,u_0) = \ZZ \oplus \pi_1(\CC^2 - \LL',u_0) = \langle z,x_1,\ldots,x_k : R , [z,x_i]=e, \, 1 \leq i \leq k  \rangle,$$ which is obviously also a conjugation-free presentation too.

\medskip

 Assume thus that $\LL$ does not intersect every line of $L$ transversally. Let $p$ be an intersection point of $\LL'$. We draw $L$ through $p$ and assume that $L$ is almost vertical to the reference line $\ell$ with a very negative slope. We can assume that, since after we prove that the arrangement $\LL' \cup L$ has a conjugation-free geometric presentation, we can use Lemma \ref{g1_op_CFpreserve} to rotate the line $L$ around the point $p$ (back to its ``original" place). The arrangement is now at the position described in  Figure \ref{addingLine3}(a).

\begin{figure}[!ht]
\epsfysize 6.5cm
\epsfbox{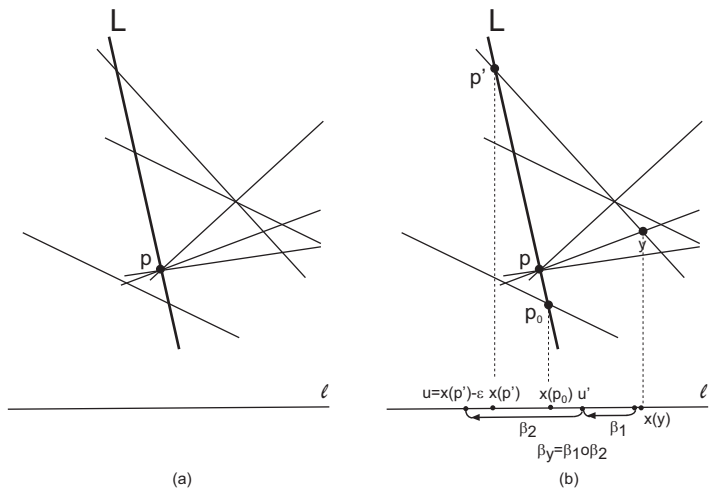}
\caption{Part (a) describes the setting for adding the line $L$ in the case of real line arrangements. Part (b) depicts the notations for the proof of the conjugation-free property for Step \ref{lemConjFreeLine}.}\label{addingLine3}
\end{figure}

We introduce a few notations.
Let $p'$ be the highest intersection point on $L$. Numerate the intersection points on $L$, starting from the lowest point on $L$, by $p_0,\,p_1,\,\ldots,\,p_v=p,\,\ldots,\,p_u=p'$ (note that all the points $p_i$ are nodes, except for $p_v$ which is an intersection point of multiplicity $m>2$) and denote their corresponding Lefschetz pairs by

 \begin{itemize}
  \item $s_0 = [1,2], s_1 = [2,3], \ldots, s_{v-1} = [v,v+1],$
  \item $s_v = [v+1,v+m]$,
  \item $s_{v+1}=[v+m,v+m+1],\ldots, s_u = [n-1,n]$,
\end{itemize}
 \noindent
 where $n$ is the number of lines in $\LL$. %Let  $u' \in \ell$ be a point such that $u' = x(p_0)+\epsilon$.

\medskip

We now divide the proof into a number of steps, each concentrating in a different domain, depending on the location of the basepoint.
First, we prove that for the basepoint $u \in \ell$, where $u = x(p')-\varepsilon$, $0 < \varepsilon \ll 1$ (see Figure \ref{addingLine3}(b)), $\pi_1(\CC^2 - \LL,u)$ has a conjugation-free geometric presentation.

\begin{step} For $u = x(p')-\varepsilon$, $\pi_1(\CC^2 - \LL,u)$ has a conjugation-free geometric presentation. \label{lemConjFreeLine}
\end{step}

\begin{proof}
We prove this step in two parts.
 In Part (I), we prove that all the skeletons of the intersection points to the left of $u$ or to the right of $u' \doteq x(p_0)+\varepsilon$ are exactly the same when computed in $\LL$ or in $\LL'$. Once we  prove that and we prove also that all the relations induced by the intersection points on $L$ have no conjugations, then we can use the fact that $\pi_1(\CC^2 - \LL',u)$ is conjugation-free for proving, in Part (II), that $\pi_1(\CC^2 - \LL,u)$
  has a conjugation-free geometric presentation.

\medskip
\noindent
\emph{\textbf{Part (I)}:}  Choosing the basepoint to be $u$ yields almost immediately that all the relations induced by intersection points on $L$ have no conjugations. As for the intersection points to the left of $u$,  since the intersection points on $L$ are not affecting any of these intersection points, we can consider the resulting skeletons
 as skeletons of $\LL'$ (where the only difference is that the fiber contains now an additional point, corresponding to the line $L$, being the highest point in the fiber and not participating in the skeletons). We now have to treat  the relations induced by the intersection points which are to the right of $L$.

For every intersection point $y \in \LL$ which is to the right of $u'$, let $s_y$ be the initial skeleton of $y$. We now  consider the braids which act on $s_y$ while going along a path $\beta_y$ starting at $x(y)-\varepsilon$ and ending at $u$. This path can be divided into two parts: the first part $\beta_1$ which starts at $x(y) - \varepsilon$ and ends at $u'$ and the second part $\beta_2$ which starts at $u'$ and ends at $u$ (see Figure \ref{addingLine3}(b)).

Let $(s'_y)_1$ (resp. $(s'_y)_2$) be the skeleton after the action of the braids induced by $\beta_1$ (resp. $\beta_y$) in the arrangement $\LL'$ and let
$(s_y)_1$ (resp. $(s_y)_2$) be this skeleton after the action of the braids induced by $\beta_1$ (resp. $\beta_y$) in the arrangement $\LL$.

Note that the fibers over $x(y) + \varepsilon$ in $\LL$ and  in $\LL'$ differ only by one point - in $\LL$ there is an additional point in the fiber, being the lowest point (numerated by $1$) and representing the line $L$. Since the braids induced by $\beta_1$ in the  arrangement $\LL$ do not induce any motion on this point in the fiber, numerated by $1$ (as the line $L$ does not intersect the other components in the section between $x(y)$ and $u'$), the skeletons $(s'_y)_1$  and $(s_y)_1$ are the same, except for that all the indices in $(s_y)_1$ are increased by $1$.

Note that the braid induced by the path $\beta_2$ in $\LL'$ is $$\Delta^{-1}\langle v+1,v+m-1 \rangle,$$ as $p \in \LL'$ is an intersection point with multiplicity $m-1$. On the other hand, the braid induced by the path $\beta_2$ in $\LL$ is:
$$
B_2 \doteq \Delta^{-1}\langle 1,2 \rangle  \cdots \Delta^{-1}\langle v,v+1 \rangle  \Delta^{-1}\langle v+1,v+m \rangle \Delta^{-1}\langle v+m,v+m+1 \rangle \cdots \Delta^{-1}\langle n-1,n \rangle.
$$

Now, when performing  $B_2$ on $(s_y)_1$, recall that $\beta_2$ is a path beneath all the intersection points on $L$ (that is, there are no intersection points on $L$ outside the section $\{ a \in \C^2 : u' < x(a) < u \}$). Since  $L$ is almost vertical with a negative slope, the braid $B_2$ induces the motion $\delta^{-1}_{n}$ on the point numerated by $1$  in the arrangement $\LL$ (in the fiber over $u'$), that is, the point $1$ performs a $180^\circ$ rotation clockwise, eventually turning into the point numerated by $n$. Moreover, as the point $1$ (in the fiber over $u'$) is not involved in the skeleton $(s_y)_1$, the only braid that does affect  $(s_y)_1$ is $\Delta^{-1} \langle v+1,v+m \rangle$. At the stage of applying $\Delta^{-1} \langle v+1,v+m \rangle$ (in $\LL$), the point  numbered as $1$ is then numbered as $v+1$ and 
the skeleton $(s_y)_1$ is passing beneath it. The braid $\Delta^{-1} \langle v+1,v+m \rangle$ sends the point $v+1$ to the point $v+m$ and the effect on the skeleton itself is as the effect of $\Delta^{-1} \langle v+1,v+m-1 \rangle$. This means that the skeletons $(s'_y)_2$ and $(s_y)_2$ are the same, but in the model of $(s_y)_2$ there will be one additional point corresponding to $L$, which will be the highest point.

\medskip
\noindent
\emph{\textbf{Part (II)}:} In this part we prove that $\pi_1(\CC^2 - \LL,u)$ has a conjugation-free geometric presentation, using the fact that $\pi_1(\CC^2 - \LL',u)$ has a conjugation-free presentation. Note that this is the same method used in the second part of \cite[Proposition 2.2]{EGT2}, and we bring it here for the
convenience of the reader.

We know that  $\pi_1(\CC^2 - \LL',u)$ has a conjugation-free geometric presentation;
hence we have that by a simplification process starting from the corresponding Zariski-van Kampen presentation, one can reach a presentation without conjugations.
If we imitate the simplification process of the presentation of $\pi_1(\CC^2 - \LL',u)$ for the presentation of  $\pi_1(\CC^2 - \LL,u)$,
the cases in which we need to use the relations induced by the point $p$ are the relations that have been simplified by using the relations induced by $p$ before adding the line $L$.
Recall that the  Lefschetz pair of $p$ in $\LL$ is $[v+1,v+m]$, and  in $\LL'$ the corresponding Lefschetz pair is $[v+1,v+m-1]$.
Denote by $\{\G_i\}$ the geometric generators both in $\pi_1(\CC^2 - \LL',u)$ and  $\pi_1(\CC^2 - \LL,u)$.

Thus,
in $\pi_1(\CC^2 - \LL',u)$, the relations induced by $p$ are:
$$
R_p: \quad [ \G_{v+1}, \G_{v+2},\ldots, \G_{v+m-1}]=e
$$

Since  in the fiber over $u$ in the arrangement $\LL$, the line $L$ is numbered as $n$ (and the numeration of the other lines was not changed), the relations induced by $p$ in $\pi_1(\CC^2 - \LL,u)$  are:
$$
\tilde{R}_p: \quad [ \G_{v+1}, \G_{v+2},\ldots, \G_{v+m-1},\G_n]=e
$$

We can divide the relations in $\pi_1(\CC^2 - \LL',u)$ into two subsets:
\begin{enumerate}
\item Relations that during the simplification process contain the subword
$$
\G^{-1}_{v+1} \cdots \G^{-1}_{v+m-2} \G_{v+m-1} \G_{v+m-2} \cdots \G_{v+1}.
$$

\item Relations that do not contain the above subword during its simplification process.
\end{enumerate}

For the second subset, the simplification process will be identical before adding the line $L$ and after it, since all the other relations induced by points of $\LL'$ have not been changed by adding the line $L$.

For the first subset, let  $R$ be a relation from this subset.  Except for applying the relations induced by $p$, the rest of the simplification process is identical to the one before adding the line. The only change is in the step of applying $R_p$. In this step, before adding the line $L$, the generator $\G_n$ has not been involved in $R_p$, but after adding the line $L$, it appears in $\tilde{R}_p$. Hence, for applying $\tilde{R}_p$, we have to conjugate the relation $R$ by $\G_n$, and using the commutative relations  which $\G_n$ is involved, we can diffuse $\G_n$ into the relation $R$, so we can use the relation $\tilde{R}_p$ instead of $R_p$.

Hence, we can simplify all the conjugations in all the relations in $\pi_1(\C^2 - \LL,u)$ as we simplify the relations in $\pi_1(\C^2 - \LL',u)$, so we get a conjugation-free geometric presentation, as needed. \end{proof}

\begin{step} For $u = x(p_0)+\varepsilon$, $\pi_1(\CC^2 - \LL,u)$ has a conjugation-free geometric presentation. \label{lemConjFreeLine2}
\end{step}

\begin{proof}
 The proof of this step is almost identical to the proof of Step \ref{lemConjFreeLine}. Again, all the relations induced by the intersection points on $L$ have no conjugations. As for the intersection points to the right of $u$,  since the intersection points on $L$ are not affecting any of these intersection points, we can consider the resulting skeletons
 as skeletons of $\LL'$  except for that now all the indices are increased by $1$ (as the line $L$ is numbered as $1$). We now have to treat  the relations induced by the intersection points which are to the left of $L$.

 However, the treatment is almost identical to the treatment of Step \ref{lemConjFreeLine}: for an intersection point $y$, which is to the left of $p'$,
we consider the two initial skeletons in the fiber over $x(y)+\varepsilon$ in $\LL'$ and in $\LL$, which are the same, except for that
in $\LL$ there is one additional point corresponding to $L$, which will be the highest point. Now, we consider  a path $\beta_y$ from $x(y)+\varepsilon$ to $u$ and we divide it into two parts: the first part $\beta_1$, from $x(y)+\varepsilon$ to $x(p')-\varepsilon$, and the second part $\beta_2$, from $x(p')-\varepsilon$ to $u$. As before, the first part does not induce any difference between the two above skeletons.
The second part $\beta_2$ causes eventually that the highest point in the fiber in $\LL$  performs a $180^\circ$ rotation counterclockwise,
while as for the rest of the skeleton, the composition of the braids corresponding to this part in $\LL$ induced the same operation as if it were in $\LL'$.
This means that the final skeletons induced by $y$ (the one in $\LL'$ and the second in $\LL$) are the same, except for that all the indices in the second are increased by $1$.
Now we can use the same simplification process with the method described in Part (II) of the proof of Step \ref{lemConjFreeLine}.
\end{proof}

Recall that $N$ denotes the set of the images of the singular points (with respect to the projection $\pi$) on $\ell$. We now prove that $\pi_1(\CC^2 - \LL,u)$ has a conjugation-free geometric presentation for every basepoint $u$ that belongs to  one of the following domains:

\begin{itemize}
\item $D_1 = \{ u \in \ell - N : x(p') < u < x(p_0) \}$.
\item $D_2 = \{ u \in \ell - N : x(p_0) < u \}$.
\item $D_3 = \{ u \in \ell - N : x(p') > u \}$.
\end{itemize}

\medskip
\begin{step} For $u \in D_1$, $\pi_1(\CC^2 - \LL,u)$ has a conjugation-free geometric presentation. \label{lemConjFreeLine3}
\end{step}

\begin{proof}
If $u \in D_1$,  we can use Remark \ref{remG3_Action}(II), stating that when we pass $u$ ``below" the image of a node (with respect to the projection), then the conjugation-free property is preserved. Then, $\pi_1(\C^2-\LL,u)$ is conjugation-free, since we can either start from the basepoint $x(p')-\varepsilon$ (as we know by Step \ref{lemConjFreeLine} that $\pi_1(\CC^2 - \LL, x(p')+\varepsilon)$ is conjugation-free)  and ``pass" $u$ to the right, going only below nodes till we almost reach $x(p)$; or we can start from the basepoint $x(p_0)-\varepsilon$ (as we know by Step \ref{lemConjFreeLine2} that $\pi_1(\CC^2 - \LL, x(p_0)+\varepsilon)$ is conjugation-free)  and ``pass" $u$ to the left, going only below nodes till we almost reach $x(p)$ (from its other side).
\end{proof}

\begin{step} For $u \in D_2$, $\pi_1(\CC^2 - \LL,u)$ has a conjugation-free geometric presentation. \label{lemConjFreeLine4}
\end{step}

\begin{proof}
The proof of this step is based on proving the following three claims:
\begin{enumerate}
\item  All the skeletons induced by the intersection points not on $L$ are the same in $\LL$, as if they were computed in $\LL'$,
except for that all the corresponding indices in $\LL$ are increased by $1$.
\item All the relations induced by the nodes on $L$ have no conjugations.
\item The relation induced by $p \in L$ has no conjugations.
\end{enumerate}

If we prove these claims, then we are done, as we can use the same simplification process done for the skeletons associated to the intersection points not on $L$
in $\pi_1(\CC^2 - \LL',u)$, as we have done in Part (II) in the proof of Step \ref{lemConjFreeLine}.

As for claim (1), the proof is essentially the same as in Step \ref{lemConjFreeLine2}.
We now prove claim (2) by induction on the location of the basepoint $u$ with respect to the intersection points of $\LL$ to the right of $p_0$. Let us numerate the intersection points of $\LL$ in the domain
$\{m \in \CC^2 : x(m) \geq x(p_0)\}$ by
$q_0=p_0,q_1,\ldots,q_k$, where $x(q_i)<x(q_j)$ for $i<j$. Now, as was proved in Step \ref{lemConjFreeLine2}, the claim is correct
when $u \in \{ m \in \ell : x(q_0) < x(m) < x(q_1) \}$ which is the base of the induction. Assume that the claim is correct for all $u \in \{ m \in \ell : x(q_0) <  x(m) < x(q_i) \}$. We now prove that the claim is correct when $u$ is in the next domain $\{ m \in \ell : x(q_i) < x(m) < x(q_{i+1}) \}$.

Assume that $u = x(q_i)-\varepsilon$, and that in the fiber over $u$, the local numeration of the lines intersecting in $q_i$ is $a,a+1,\ldots,a+s=b$, where $1<a$ (as the line $L$ is always numbered as $1$ in the domain $\{ m \in \ell :  x(m) > x(p_0) \}$). By Remark \ref{remG3_Action}(I), when we move $u$ to $u'=x(p_i)+\varepsilon$, the induced isomorphism  affects only the geometric generators
$\G_a,\ldots,\G_b$ by a Hurwitz move. Explicitly, the isomorphism is:

 \begin{equation} \label{eqnf3'}
f'_3(\Ga _i) =
\left\{ \begin{array}{ccl}
{\G_i} && {i<a \text{ or } i>b}  \\
{{\Ga' _b}  \cdots {\Ga'_{a+1}} \Ga' _{a} {\Ga'^{-1}_{a+1}} \cdots  {\Ga_b'}^{-1}}  && {i=a+s=b} \\
{{\Ga' _b} \cdots {\Ga'_{a+2}} \Ga_{a+1}' {\Ga'^{-1}_{a+2}} \cdots {\Ga _b'}^{-1}} && {i=b-1} \\
{\vdots} && {\vdots} \\
{{\Ga' _b}  \Ga_{b-1}' {\Ga _b'}^{-1}} && {i=a+1} \\
{\Ga _b' } && {i=a,}
\end{array}
\right.
\end{equation}

\noindent
where $\{\G'_i\}$ are the geometric generators of $\pi_1(\CC^2 - \LL,u')$.
Note that the isomorphism $f_3$ described in Equation (\ref{eqnIsoF3}) is associated to moving a basepoint from right to left, while
$f_3'$ is associated to moving a basepoint from left to right; explicitly, $f_3' = f_3^{-1}$.

We split the proof into two cases.

\medskip \noindent
\textbf{Case (1)}: There is no line passing through $p$ and $q_i$.

When the basepoint is $u$, then the  relations induced
by the nodes on $L$ are:
$$
[\G_1,\G_j]=e,\, a\leq j \leq b
$$
(by the conjugation-free property).
Thus, the above isomorphism $f_3'$ affects these commutative relations in the following way:

$$[\G_1,\G_a]=e \,\, \to \,\, [\G'_1,\G'_b]=e,$$
$$[\G_1,\G_{a+1}]=e \,\, \to \,\, [\G'_1,\G'_b\G'_{b-1}{\G'_b}^{-1}]
\overset{[\G'_1,\G'_b]=e}{=}
[\G'_1,\G'_{b-1}]=e,$$
$$[\G_1,\G_{a+2}]=e \to [\G'_1,\G'_b\G'_{b-1}\G'_{b-2}{\G'^{-1}_{b-1}}{\G'_b}^{-1}]
\overset{[\G'_1,\G'_b]=[\G'_1,\G'_{b-1}]=e}{=}
[\G'_1,\G'_{b-2}]=e,$$

 \noindent
and we continue in the same way for the other relations in order to get rid of the conjugations.

If there are nodes on $L$ such that the lines passing through them are numbered as
  $1$ and $m$ (when numerated locally over $u'$), when $m<a$ or $m>b$,
then the  relation $[\G_1,\G_m]=e$ in $\pi_1(\CC^2 - \LL,u)$  is left unchanged in $\pi_1(\CC^2 - \LL,u')$, i.e. the corresponding relation  still has no conjugations.

\medskip \noindent
\textbf{Case (2)}: There is a line  passing  through $p$ and $q_i$.

Assume that the line
passing through $p$ and $q_i$ is numbered, locally over $u$, as $a+k$. Therefore, when the basepoint is $u$, the
relations induced by the nodes on $L$  are:
$$[\G_1, \G_x]=e, \, a  \leq x \leq b, x \neq a+k.$$
After the isomorphism, we know in any case that the
relations:

\begin{equation}\label{eqnRelPres}
[\G'_a,\G'_{a+1},\G'_{a+2},\ldots,\G'_b]=e
\end{equation}
exist (these are the relations induced by $q_i$).
 The treatment in the relations $$[\G_1, \G_x]=e, a  \leq x \leq a+k-1$$ is exactly as in case (1).
 We now treat the remaining relations, i.e. the relations $$[\G_1, \G_x]=e,\,\, a+k+1  \leq x \leq b.$$ Now,
{\small{  \begin{equation}\label{eqnRelPres2}[\G_1,\G_b]=e \to [\G'_1,\G'_b\G'_{b-1}\cdots \G'_{a+1}\G'_a{\G'^{-1}_{a+1}}\cdots{\G'^{-1}_{b-1}}{\G'_b}^{-1}] \overset{\rm{Eqn.}\, (\ref{eqnRelPres})}{=}
[\G'_1,\G'_a]=e.\end{equation}

  \begin{equation}\label{eqnRelPres3} [\G_1,\G_{b-1}]=e \to [\G'_1,\G'_b\G'_{b-1} \cdots \G'_{a+2}\G'_{a+1}{\G'^{-1}_{a+2}}\cdots{\G'^{-1}_{b-1}}{\G'_b}^{-1}]
  \end{equation}
  $$ = [\G'_a\G'_1{\G'_a}^{-1},\G'_a\G'_b\G'_{b-1} \cdots \G'_{a+2}\G'_{a+1}{\G'^{-1}_{a+2}}\cdots{\G'^{-1}_{b-1}}{\G'_b}^{-1}{\G'_a}^{-1}]
   \overset{\rm{Eqns.}\, (\ref{eqnRelPres}),(\ref{eqnRelPres2})}{=}
[\G'_1,\G'_{a+1}]=e.$$

$$
 [\G_1,\G_{b-2}]=e \to [\G'_1,\G'_b\G'_{b-1} \cdots \G'_{a+3}\G'_{a+2}{\G'^{-1}_{a+3}}\cdots{\G'^{-1}_{b-1}}{\G'_b}^{-1}] =
$$
$$
 = [\G'_a\G'_{a+1}\G'_1{\G'^{-1}_{a+1}}{\G'_a}^{-1}, \G'_a\G'_{a+1}\G'_b\G'_{b-1} \cdots \G'_{a+3}\G'_{a+2}{\G'^{-1}_{a+3}}\cdots{\G'^{-1}_{b-1}}{\G'_b}^{-1}{\G'^{-1}_{a+1}}{\G'_a}^{-1}] = $$$$
   \overset{\rm{Eqns.}\, (\ref{eqnRelPres})-(\ref{eqnRelPres3})}{=}
[\G'_1,\G'_{a+2}]=e,
$$
}}

\noindent
and we continue in the same way for the relations induced by the other nodes.

As in the previous case, if there are nodes on $L$ such that the lines passing through them are
(when numerated locally over $u$)  $1$ and $m$, when $m<a$ or $m>b$,
then the  relation $[\G_1,\G_m]=e$ in $\pi_1(\CC^2 - \LL,u)$  is left unchanged in $\pi_1(\CC^2 - \LL,u')$, i.e. the corresponding relation still has no conjugations.

\medskip

We now prove claim (3), that  the relations induced by $p \in L$ have no conjugations. Recall that the point $p$ is an intersection point of multiplicity $m>2$. The proof is similar to the proof of claim (2), i.e., it is by induction on the location of the basepoint $u$ with respect to the intersection points of $\LL$ to the right of $p_0$. Again, we know that this claim holds when  $u \in \{ m \in \ell : x(q_0) < x(m) < x(q_1) \}$, by Step \ref{lemConjFreeLine2}, which is the base of the induction. Assume that the claim is correct for all $u \in \{ m \in \ell : x(q_0) <  x(m) < x(q_i) \}$. We now prove that it is correct when $u$ is in the domain $\{ m \in \ell : x(q_i) < x(m) < x(q_{i+1}) \}$.

Assume that $u = x(q_i)-\varepsilon$, and denote $u' = x(q_i)+\varepsilon$. Let $\beta_u$ be  a path starting at $x(p)+\varepsilon$ and ending at $u$ (see Figure \ref{pathBu}). Denote by  $B_u$ the braid that is induced by the path $\beta_u$  in $\LL$ (i.e.  the Lefschetz diffeomorphism $\psi_{\beta_u}$; see Definition \ref{defLefdif}).

\begin{figure}[!ht]
\epsfysize 6.5cm
\epsfbox{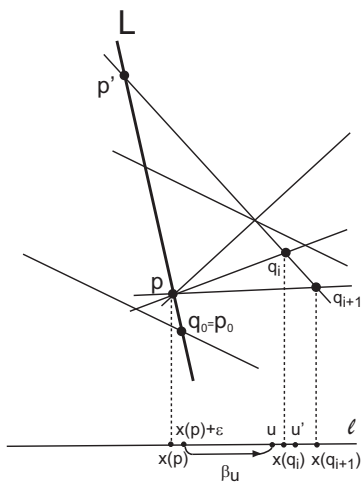}
\caption{Notations for the proof of claim (3) of Step \ref{lemConjFreeLine4}.}\label{pathBu}
\end{figure}

 Assume that in the fiber over $u$, the local numeration of the lines intersecting in $q_i$ is $a,a+1,\ldots,a+s=b$, where $1<a$ (as the line $L$ is always numbered as $1$ in the domain $\{ m \in \ell :  x(m) > x(p_0) \}$). Recall that the
 Lefschetz pair of the point $p$ in $\LL$  over $x(p)+\varepsilon$ is $[v+1,v+m]$.

When the basepoint is $u$, the simplified relation induced
by the point  $p$ is:
\begin{equation} \label{relMultiPoint}
[\G_1,\G_{B_u(v+2)},\G_{B_u(v+3)},\ldots,\G_{B_u(v+m)}]=e,
\end{equation}
by the conjugation-free property of $\pi_1(\CC^2 - \LL,u)$, which is the induction hypothesis (the notation $B_u(x)$ refers to the numeration of the point $x$ after applying the braid $B_u$ on it, when considering the braid group as modeled by the group of diffeomorphisms on a punctured disk, where $x$ is one of the punctures; see Definition  \ref{defBraidGr}).
Note that when passing to the basepoint $u'$, the induced isomorphism is $f_3'$ as described earlier (see Equation (\ref{eqnf3'})).
However, note that $f_3'$ affects  at most  one of the generators $\G_{B_u(x)},\,$ where $ v+2 \leq x \leq v+m$, since there
 is at most one line passing through both $p$ and $q_i$.
We now divide the proof into two cases: when there is no  line passing through $p$ and $q_i$ and when there is one.

\medskip \noindent
\textbf{Case (1)}: If there is no line passing through $p$ and $q_i$, then the isomorphism $f_3'$ does not affect
 relation (\ref{relMultiPoint}), i.e. all the generators $\G_i$ that participate in the relation are sent to $\G_i'$. This means that the relations induced by $p$ have no conjugations in $\pi_1(\CC^2 - \LL,u')$.

\medskip \noindent
\textbf{Case (2)}: There is a line passing through $p$ and $q_i$. Assume that  this line is numbered over $u$ as $B_u(j) = a+t$ (where $0 \leq t \leq s$, $v+2 \leq j \leq v+m$), that is, only the generator $\G_{a+t}$ is affected by $f_3'$ and the other generators $\G_{B_u(x)},\, v+2 \leq x \leq v+m, x \neq j $, are sent to $\G'_{B_u(x)}$. Now,
$$
f_3'(\G_{B_u(j)}) = f_3'(\G_{a+t}) \doteq {\Ga' _b} \cdots {\Ga'_{b-t+1}} \Ga_{b-t}' {\Ga'^{-1}_{b-t+1}} \cdots {\Ga _b'}^{-1},
$$

Note that over $u'$, the line, that passes through $p$ and $q_i$, is numbered as $b-t$. This means that the lines numbered as
$b,b-1,\ldots,b~-~t~+~1$  over $u'$, do not pass through $p$ and therefore they intersect  the line $L$ in nodes. By claim (2), we have
in $\pi_1(\CC^2 - \LL,u')$ that $[\G_1',\G_x']=e,\, b-t+1 \leq x \leq b$.

Thus, the relation induced by $p$ in $\pi_1(\CC^2 - \LL,u')$ is now:

$$
\mathcal{\breve{R}}_p \, : \, [\G_1',\G'_{B_u(v+2)},\ldots, \G'_{B_u(j-1)}, f_3'(\G_{B_u(j)}), \G'_{B_u(j+1)}, \ldots ,\G'_{B_u(v+m)}]=e.
$$

We claim that we can perform the  same simplification process, done in $\pi_1(\C^2-\LL',u')$, for the relation $\mathcal{R}_p$ induced by $p$, after moving from $u$ to $u'$. Indeed, the relation $\mathcal{R}_p$ is

$$
\mathcal{R}_p \, : \, [\G'_{B_u(v+1)},\ldots, \G'_{B_u(j-2)}, f_3'(\G_{B_u(j-1)}), \G'_{B_u(j)},\ldots\G'_{B_u(v+m-1)}]=e
$$
(recall that we have to increase the indices by $1$ when passing from $\LL'$ to $\LL$ when the basepoint is to the right of $x(p_0)$).

   We have to consider two cases: the first is that during the simplification process
  of $\mathcal{R}_p$, we are simplifying $f_3'(\G_{B_u(j-1)})$; and the second is that during the simplification process
  of $\mathcal{R}_p$, we are simplifying a product of $f_3'(\G_{B_u(j-1)}) \cdot \G'_{B_u(j-2)}$, which might be turned, in $\mathcal{\breve{R}}_p$, to a product of $f_3'(\G_{B_u(j)}) \cdot \G'_1 \cdot \G'_{B_u(j-1)}$. Now, only the second case can cause problems (as the first case does not involve $\G_1'$). However,  note that  in the product $f_3'(\G_{B_u(j)}) \cdot \G'_1$, we can diffuse $\G'_1$ till it reaches the generator $\Ga_{b-t}'$ (since $[\G_1',\G_x']=e,$ for all $ b-t+1 \leq x \leq b$ as was explained above) and now apply the same simplification
  process as was done in $\pi_1(\C^2-\LL',u')$.

Therefore,  the relation induced by $p$  has no conjugations and now we can use  the method described in Part (II) in the proof of Step \ref{lemConjFreeLine}. \end{proof}

\begin{step} For $u \in D_3$, $\pi_1(\CC^2 - \LL,u)$ has a conjugation-free geometric presentation. \label{lemConjFreeLine5}
\end{step}

\begin{proof}
The proof of this step is similar to the proof of Step \ref{lemConjFreeLine4}. The only difference is that now we use the isomorphism
$f_3$ (and not $f_3'$), as we move (during the inductive step of the proofs of claims (2) and (3)) from the basepoint $x(q_i)+\varepsilon$ to the basepoint $x(q_i)-\varepsilon$. However, as can be easily seen, we can use the same techniques for proving these claims. \end{proof}

The above series of steps completes the proof of Proposition \ref{lemAddLineComProof}(1). \end{proof}

\section{Preservation of Conjugation-Freeness: The case of real CL arrangements}\label{secRealCLArr}

The goal of this section is to prove Proposition \ref{lemAddLineComProof}(2): i.e.,  under certain conditions, if we add a line $L$
to a CL arrangement $\A$ whose fundamental group is  conjugation-free, such that $L$ passes through at most one singular point of $\A$, then the fundamental group of $\A \cup L$
is also conjugation-free. The additional condition, that is not required in the case of real line arrangements, is that $\beta(\A \cup L)=0$.
However, we will see in Section \ref{secOneCycleNotPass} that this condition can be weakened.
Moreover,
in the course of the proof, we prove other important results for CL arrangements, such as the commutation of the generator associated to the conic with all the other geometric generators (associated to the lines)
in $\pi_1(\C^2 - \A)$.

\medskip

When trying to follow the proof given in the previous section (for real line arrangements),
  in addition to the operations $\h_1,\h_2,\h_3$, which can also be operated on a real CL arrangement, we have to consider the following new operation:

 \begin{itemize}
\item $\h_4 $: If a real rotation of a line causes a tangency point between this line and the conic, perform a rotation in $\CC^2$ over the possible tangency point;
\end{itemize}
\noindent
Moreover, we have to reconsider the following operations:

 \begin{itemize}
\item $\h_1 $: Rotating a line over a multiple point, as in the case of real line arrangements.
\item $\h_3 $: Changing the basepoint $u \in \ell -N$, now when  moving $u$ from one side of a branch point to its other side.
\end{itemize}

We note that the proof of the preservation of the conjugation-free property of the operation $\h_2$ (rotating a line over another line) holds in tact also for the CL arrangements specified in
Proposition \ref{lemAddLineComProof}(2), as it involves only the generators of the lines.
The proof of the preservation of the conjugation-free property of the operation $\h_3$ will have to be changed, as we will now deal
 with complex points too.

%Indeed, as was noted in the previous paragraph, the proof of the preservation of $\h_3$ is in fact the

\begin{lemma} \label{g1_op_CFpreserve_CL}
For a real CL arrangement $\A$ with one conic, the operation $\h_1$ preserves the conjugation-free property.
\end{lemma}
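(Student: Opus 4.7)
The plan is to mimic the argument of Lemma \ref{g1_op_CFpreserve} for real line arrangements, now incorporating the presence of the conic. Let $\ell \in \A$ be the line rotated by $\h_1$ around its unique multiple point $p$; we assume throughout that the rotation does not produce a tangency with the conic $C$, since tangency is the concern of the separate operation $\h_4$. The strategy is to exhibit the isomorphism $f_1 : \pi_1(\CC^2 - \A, u) \to \pi_1(\CC^2 - \h_1(\A), u)$ and to verify that it maps geometric generators to geometric generators, which immediately translates the conjugation-free relations of the source into conjugation-free relations of the target.

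First, I would construct $f_1$ by extending the analysis of \cite[Theorem 4.13]{GTV} to arrangements containing one conic. The rotation of $\ell$ is supported in a small neighborhood in $\CC^2$ and induces a diffeomorphism of the complement; in the fiber $\CC^1_u$ over the fixed real basepoint $u$, it acts as a relabelling of the $k+2$ marked points corresponding to the $k$ lines and the two branches of the conic. Genericity of the projection is preserved throughout the deformation because $\ell$, passing through only the one multiple point $p$, meets every other component of $\A$ transversally.

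Next, I would check the relations one type at a time. The relation induced at $p$ keeps the same cyclic form $[\G_{i_1}, \dots, \G_{i_m}] = e$, since neither the multiplicity nor the set of lines through $p$ is changed by $\h_1$. Relations at other multiple points of $\A$ (none of which lie on $\ell$) are untouched. Transversal crossings of $\ell$ with the other lines or with the conic contribute only commutations of the form $[\G_\ell, \G_\ast] = e$, which are automatically conjugation-free. The branch-point identifications $\G_{c_1} = \G_{c_2}$ associated to $C$ remain unchanged because the branch points of $C$ are disjoint from the support of the rotation.

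The main subtlety is ensuring that, as $\ell$ sweeps across the conic during the rotation, no spurious conjugation appears in the braid monodromy. This is resolved by Assumption \ref{assume}, which forbids tangencies: each passage of $\ell$ across a branch of $C$ is a transversal crossing whose local effect on the Lefschetz diffeomorphism is a single half-twist between two adjacent strands, exactly as in the line-arrangement case. Consequently $f_1$ sends every geometric generator to a geometric generator, the generating set for $\pi_1(\CC^2 - \h_1(\A),u)$ is again the meridian system in the fiber $\CC^1_u$, and all induced relations remain conjugation-free, proving the lemma.
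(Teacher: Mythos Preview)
Your argument covers the case already handled by Lemma \ref{g1_op_CFpreserve}, but it misses the genuinely new phenomenon that the paper's proof is built around: during the rotation of $\ell$ about $p$, the line may pass \emph{through a branch point} $b$ of the conic (with respect to the projection). Your sentence ``the branch-point identifications $\G_{c_1}=\G_{c_2}$ associated to $C$ remain unchanged because the branch points of $C$ are disjoint from the support of the rotation'' is precisely the unjustified step. A branch point is a point of $\CC^2$ and there is no reason it should avoid the swept region; when $\ell$ crosses it, the skeleton of $b$ changes.

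Concretely, the paper splits into two cases. If $\ell$ never passes through a $b_i$, your argument (and that of Lemma \ref{g1_op_CFpreserve}) applies verbatim. If $\ell$ does pass through a branch point $b$, then after the rotation the branch-point relation $\G_i=\G_{i+1}$ can become $\G'_{i-1}\G'_i{\G'^{-1}_{i-1}}=\G'_{i+1}$, where $\G'_{i-1}$ is the generator of $\ell$ and $\G'_i,\G'_{i+1}$ are the conic generators (see Figure \ref{rotateOverBranch}). This relation visibly has a conjugation. The way the paper removes it is to use the conjugation-free property of the \emph{original} presentation: the node $\ell\cap C$ gives $[\G'_{i-1},\G'_i]=e$ (or $[\G'_{i-1},\G'_{i+1}]=e$), and this commutation simplifies the new branch-point relation back to one without conjugations. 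Your ``main subtlety'' paragraph treats only the transversal crossings of $\ell$ with the branches of $C$ (nodes), which is a different issue; you need to add the branch-point case and invoke the existing commutator relation to finish the proof.
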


\begin{proof}
Let $b_1,b_2$ be the two branch points of the conic with respect to the projection.
We have to consider two cases: the first is  that during the rotation of the line $L \in \A$, $L$ does not pass through one of the $b_i$'s. The second is that it does pass through one of them.

As for the  first case, the proof is the same as the proof of Lemma \ref{g1_op_CFpreserve}. As for the second case, we have the situation depicted in Figure \ref{rotateOverBranch}(a).

\begin{figure}[!ht]
\epsfysize 7cm
\epsfbox{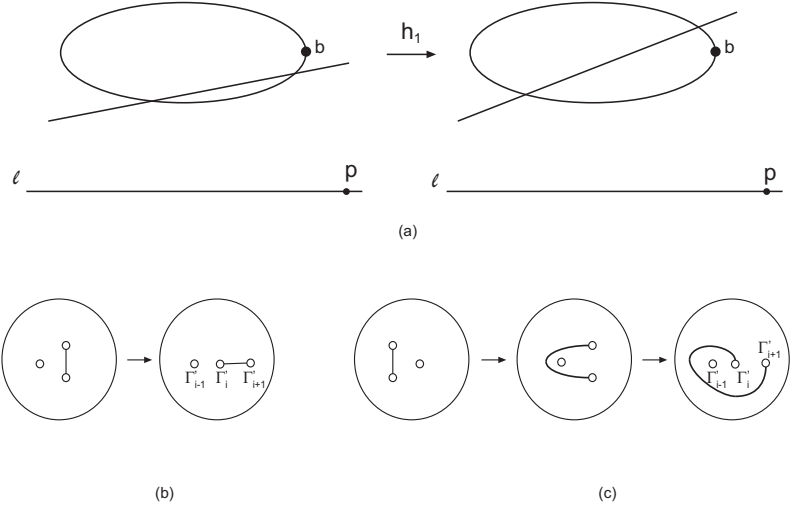}
\caption{Part (a) is a rotation of a line over a branch point $b$ and parts (b) and (c) are its effects on the skeleton associated to the branch point $b$.}\label{rotateOverBranch}
\end{figure}

As before, the isomorphism $\pi_1(\C^2 - \A,u) \to \pi_1(\C^2 - \h_1(\A),u)$ is $\G_i \mapsto \G'_i$.
Thus, the only relation that might be changed by this rotation is the one induced by the branch point $b$: after considering the complex points of the fiber
as real points, the relation might be transposed to the following: $\{\G_i = \G_{i+1}\} \mapsto \{ \G'_{i-1}\G'_i{\G'^{-1}_{i-1}} = \G'_{i+1} \}$,
where $\G'_{i-1}$ is the generator of the line and $\G'_i, \G'_{i+1}$ are the generators of the conic in $\pi_1(\C^2 - h_1(\A),u)$ (see Figures \ref{rotateOverBranch}(b) and \ref{rotateOverBranch}(c)). However, due to the isomorphism above, we still have the relation
$[\G'_{i-1}, \G'_{i}]=e$ (or $[\G'_{i-1}, \G'_{i+1}]=e$; note that $\pi_1(\C^2 - \A)$ is conjugation-free) and thus the resulting relation induced by the branch point  has no conjugations. So, we get that the presentation of $\pi_1(\C^2 - \h_1(\A))$ is also conjugation-free.
\end{proof}

\begin{lemma} \label{g5_op_CFpreserve_CL}
For a real CL arrangement $\A$ with one conic, the operation $\h_4$ preserves the conjugation-free property.
\end{lemma}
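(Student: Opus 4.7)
The plan is to adapt the argument of Lemma \ref{g1_op_CFpreserve_CL} to the complex setting. Let $L \in \A$ be the line being rotated around its unique multiple intersection point $p$, let $C$ be the conic, and parametrize the pencil of lines through $p$ by an angle $\theta$. During a real rotation the line $L(\theta)$ would become tangent to $C$ at some value $\theta_0$; the operation $\h_4$ replaces this passage by a small complex semicircular arc around $\theta_0$, chosen so that $L(\theta)$ and $C$ remain transverse in $\C^2$ throughout the motion. By Assumption \ref{assume} the resulting $\h_4(\A)$ is again a valid CL arrangement, and the combinatorial type of the arrangement is preserved along the complex path.

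The first step is to describe the induced isomorphism $f_4 : \pi_1(\C^2 - \A, u) \to \pi_1(\C^2 - \h_4(\A), u)$. Since the complex path is a deformation through arrangements of constant combinatorial type, the complement deforms continuously and $f_4$ transports meridians to meridians. Arguing as in Lemma \ref{g1_op_CFpreserve_CL}, each generator $\G_i$ of $\pi_1(\C^2 - \A,u)$ is sent to the corresponding generator $\G_i'$ of $\pi_1(\C^2 - \h_4(\A),u)$, with the single subtle point that the half-monodromy around $\theta_0$ in the parameter plane can act as a half-twist on the two points of $L \cap C$ in the generic fiber; in the worst case this corresponds to a relabeling swap of the two conic generators $\G_{C_1}$ and $\G_{C_2}$. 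The branch points of $C$ and all singular points of $\A$ not on $L$ are unaffected.

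The second step is to verify that the relations remain of the forms allowed by Definition \ref{CFGP-CLArr1}. The only relations potentially changed are those involving $\G_L$: the two commutation relations $[\G_L, \G_{C_1}] = [\G_L, \G_{C_2}] = e$ from the nodes of $L \cap C$, the commutators with lines that $L$ crosses transversely, and the cyclic relation at $p$. Under $f_4$, each of these maps to a relation of the same type with at most $\G_{C_1}$ and $\G_{C_2}$ interchanged; in particular, the pair of node relations is mapped to itself, and the cyclic relation at $p$ is mapped to a cyclic relation of the same form without any conjugations introduced.

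The main obstacle will be the case where $p \in C$, so that one of the conic generators participates in the cyclic relation at $p$. In that case, a naive half-twist could insert a genuine conjugation into that relation. I would resolve this by orienting the complex arc so that the monodromy acts only on the conic branch not appearing in the cyclic word at $p$, reducing once more to a harmless relabeling. The hypothesis $\beta(\A \cup L) = 0$ and the fact that $L$ passes through at most one multiple point of $\A$ keep the local geometry simple enough for such a choice to be available.
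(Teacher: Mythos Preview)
Your approach has a genuine gap: you assert that the isomorphism $f_4$ sends each geometric generator $\G_i$ to $\G_i'$ (up to a possible swap of the two conic meridians), but this is precisely the nontrivial content of the lemma, and you do not establish it. Appealing to Lemma \ref{g1_op_CFpreserve_CL} is not enough, since that lemma relies on \cite[Theorem 4.13]{GTV}, which is proved only for real rotations; the whole point of $h_4$ is that the deformation leaves $\RR^2$, and the effect on the braid monodromy must be computed separately.

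The paper's proof works at the level of the braid monodromy rather than at the level of an abstract isomorphism. Before the operation, the two intersection points $p_1,p_2$ of $L$ and $C$ are real nodes, and passing the path $\beta$ below $x(p_1),x(p_2)$ contributes the Lefschetz diffeomorphism $\Delta\langle t,t+1\rangle\cdot\Delta\langle t,t+1\rangle=(H(\sigma))^2$. After $h_4$, the intersection points become complex; the paper invokes \cite[Corollary 2.2]{FT5point} to show that the Lefschetz diffeomorphism induced by going below the pair of complex intersection points is again $(H(\sigma))^2$ (after the standard $90^\circ$ rotation identifying the complex fiber points with real ones). Since the braids coincide, every skeleton---and hence every relation in the Zariski--van Kampen presentation---is literally unchanged, so conjugation-freeness is automatic. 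Your speculative half-twist swapping $\G_{C_1}$ and $\G_{C_2}$ does not arise, and your final paragraph about the case $p\in C$ is solving a problem that the braid computation shows does not occur.
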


\begin{proof}

Given a line $L \in \A$ which we have to rotate, the rotation of $L$ in $\RR^2$ might not be an equisingular deformation, as at a particular point during the rotation, $L$ may be tangent to the conic.
 %(and it can also happen that when $L$ is tangent to the conic, $L$ will also pass through another node, which is forbidden).
In this case, we  change the rotation in $\RR^2$ to the following equisingular deformation, according to the  local model appearing  in Figure \ref{localModel}(a).

\begin{figure}[!ht]
\epsfysize 3cm
\epsfbox{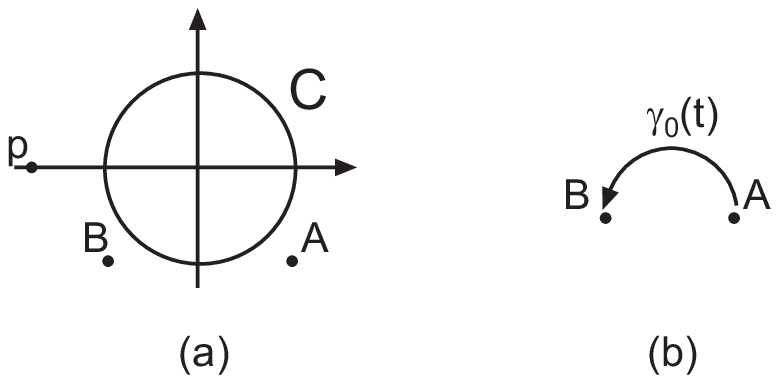}
\caption{Part (a): a local model for a complex rotation: $$C = \{(x,y)\in \C^2 : x^2+y^2=1\},\ p=(-2,0).$$ Part (b) illustrates the path $\gamma_0(t)$ used to rotate the line.}\label{localModel}
\end{figure}

Let $C = \{(x,y)\in \C^2 : x^2+y^2=1\},\ p=(-2,0)$, $A = (1,-1)$ and $ B = (-1,-1)$. Let $\gamma_0:[0,1] \to \C^2$ be half a circle of radius $1$ on the plane $\{y=-1\}$ in $\C^2$ which starts at $A$ and ends at $B$ (see Figure \ref{localModel}(b)). Let $L(t)$ be the family of lines passing through $p$ and $\gamma_0(t)$, for $0 \leq t \leq 1$. Obviously, $C \cup L(t)$ is an equisingular deformation and thus $\pi_1(\C^2 - (C \cup L(0))) \cong \pi_1(\C^2 - (C \cup L(1)))$. Denote by $h_4$ this operation after which $L = L(1)$.

We  have to prove that if $\pi_1(\C^2 - \A,u)$ is  conjugation-free, then also $\pi_1(\C^2 - \h_4(\A),u)$ is  conjugation-free.
We have to check, for all the initial skeletons associated to the singular points to the right of $p_1,p_2$, that
the braids that are applied on them, while passing below $x(p_1),x(p_2)$, along a path $\beta$, are the same in the CL arrangements $\A$ and $\h_4(\A)$; see the models depicted in Figures \ref{rotateOverTangent}(a), and  \ref{rotateOverTangent}(b).

\begin{figure}[!ht]
\epsfysize 4cm
\epsfbox{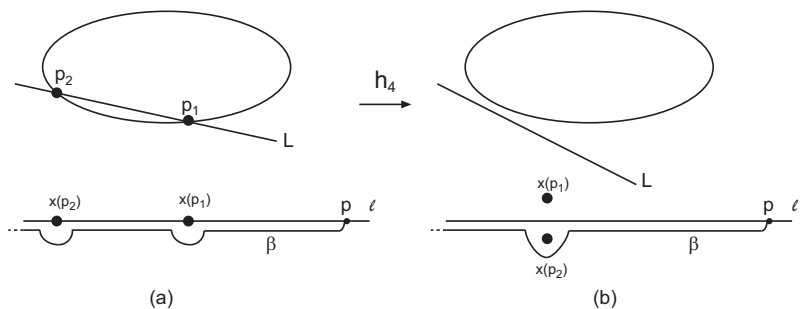}
\caption{A local model for a complex rotation and its effect on the path $\beta$.}\label{rotateOverTangent}
\end{figure}

Before the rotation of $L$ (see Figure \ref{rotateOverTangent}(a)), the braids that are applied on the skeletons while passing below $x(p_1)$ and $x(p_2)$, along a path $\beta$, are $\Delta\langle t,t+1 \rangle\cdot \Delta\langle t,t+1 \rangle = (H(\sigma))^2$, where $t$ is the local index of the
point in the fiber which corresponds to the line $L$, $t+1$ is the local index of the ``lower" generator of the conic and $\sigma$ is the segment connecting these two points in the fiber. In order to analyze the situation after the rotation of $L$ (see Figure \ref{rotateOverTangent}(b)), recall the following setting from \cite{FT5point}:
let $C_0=\{(y^2-x)(y+x+1)=0\}$ (see its real part in Figure \ref{ComplexInter}(a)). Define the projections $\pi_1,\pi_2:C_0 \to\mathbb{C}$ by $\pi_1(x,y) = {x},\,\pi_2(x,y) = {y}$. Denote by $p_1,
p_2$ the complex intersection points of the curves $y^2=x$ and $y=-x-1$. Define:
$$x_0=-\frac{1}{4},\, A=\pi_2(\pi_1^{-1}(x_0))=\left\{\pm{\frac{1}{2}i},-\frac{3}{4}\right\},$$
$$x_1=-\frac{3}{4},\,A'=\pi_2(\pi_1^{-1}(x_1))=\left\{\pm\frac{\sqrt{3}}{2}i,-{\frac{1}{4}}\right\}.$$
Let $\gamma(t), 0\leq
t \leq 1,$ be a curve starting at $x_1$, ending at $x_0$ and
surrounding $x(p_2)$ from below (see Figure \ref{ComplexInter}(b)). Let
$D$ be a disk in the $y$-axis such that $A,A'\subset D$ and denote by $\psi_{\gamma}$ the Lefschetz diffeomorphism induced by $\gamma(t)$. Let
$\sigma$ be the segment connecting $-\frac{1}{4}$ and
$\frac{\sqrt{3}i}{2}$ in $A'$, see Figure \ref{ComplexInter}(c). Then, by \cite[Corollary 2.2]{FT5point}, $\psi_{\gamma}=
(H(\sigma))^2$, where $H(\sigma)$ is the half-twist induced by the
path $\sigma$.

\begin{figure}[!ht]
\epsfysize 4cm
\epsfbox{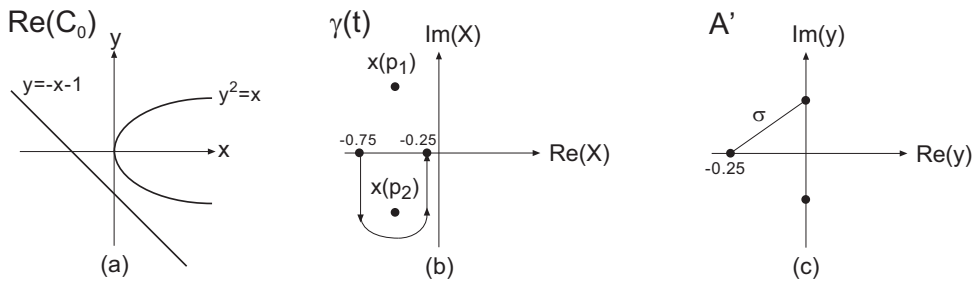}
\caption{A local model for a complex intersection between a line and a conic.}\label{ComplexInter}
\end{figure}

It is now clear that if we rotate the two complex points by  $90^{\circ}$  counterclockwise  in the fiber  (in order to obtain a
real model for the fiber), we get, in both cases, that the same braid  is applied, as needed.
\end{proof}

After we have proved these two lemmata, we are ready to finish the proof of Proposition \ref{lemAddLineComProof}(2).

\begin{lemma} \label{g3_op_CFpreserve_CL}
Let $\A$ be a real CL arrangement with one conic such that $\beta(\A)=0$. Assume that $\A$ can be presented as a union  $\A = \A' \cup L$, where $\A'$ is a real CL arrangement
with a conjugation-free presentation of $\pi_1(\C^2 - \A',u)$
for \emph{real every} basepoint $u \in \ell-N$, and $L \not\in \A'$ is a real line that passes through at most  one singular point $p$ of $\A'$.
 Then, in this class of real CL arrangements,
the operation $\h_3$ preserves the conjugation-free property for $\A$, i.e. $\pi_1(\C^2 - \A,u)$ has a conjugation-free geometric presentation for every real basepoint $u \in \ell$.
\end{lemma}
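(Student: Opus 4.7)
I would follow the four-step structure of the proof of Lemma \ref{g3_op_CFpreserve} very closely. First, using Lemma \ref{g1_op_CFpreserve_CL} and Lemma \ref{g5_op_CFpreserve_CL}, rotate $L$ into a nearly vertical position with a very negative slope passing through the (at most one) singular point $p$ of $\A'$ through which $L$ passes. Numerate the intersection points along $L$ from bottom to top as $p_0,p_1,\ldots,p_v=p,\ldots,p_u=p'$; all $p_i$ with $i\neq v$ are transverse double points of $L$ with another line or with the conic. Then prove conjugation-freeness of $\pi_1(\CC^2-\A,u)$ in the four cases where $u$ lies immediately to the left of $p'$, immediately to the right of $p_0$, strictly between $x(p')$ and $x(p_0)$, or outside that strip.

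The first two steps (the analogues of Steps \ref{lemConjFreeLine} and \ref{lemConjFreeLine2}) proceed by comparing, for each singular point $y$ of $\A$, the final skeleton transported to the fiber over $u$ against the corresponding final skeleton in $\A'$. Since $L$ is nearly vertical and $u$ is chosen near an extreme intersection point on $L$, the fiber point coming from $L$ is not perturbed when the path from $x(y)\pm\varepsilon$ to $u$ stays in the relevant half-strip, so each final skeleton of $\A$ coincides, up to a global index shift by one, with the final skeleton of $\A'$. A new feature compared with the line case is that when $u$ lies between the two branch points of the conic, the fiber over $u$ contains two complex points representing the conic; by the computation of Lemma \ref{g5_op_CFpreserve_CL} these points are carried along by a braid of the form $(H(\sigma))^2$, and one checks that the relations they produce (node relations with line generators and identification relations at branch points) carry no conjugations. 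The transfer of the conjugation-free geometric presentation from $\pi_1(\CC^2-\A',u)$ to $\pi_1(\CC^2-\A,u)$ then proceeds exactly by the simplification technique of Part (II) of Step \ref{lemConjFreeLine}, since the only new relations--those induced by $p$ and by the nodes of $L$--are themselves conjugation-free.

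For the remaining ranges of $u$ (the analogues of Steps \ref{lemConjFreeLine4} and \ref{lemConjFreeLine5}), the argument runs by induction on the location of $u$ relative to the singular points of $\A$ to the right of $p_0$ (or to the left of $p'$), applying the isomorphism $f_3$ or $f_3'$ of Remark \ref{remG3_Action} and Equation (\ref{eqnf3'}) at each transition. Two new kinds of transitions occur that are not present in the line case: (a) crossing a branch point of the conic, and (b) crossing a singular point of $\A'$ in which the conic and one or more lines meet. In case (a) the relevant isomorphism swaps the two conic generators (conjugated by the lines whose points sit between them), and the relation at the branch point simplifies to $\Gamma_i'=\Gamma_{i+1}'$ as long as the new commutators $[\Gamma_1',\Gamma_i']=[\Gamma_1',\Gamma_{i+1}']=e$ at the adjacent nodes of $L$ are available\,--\,and they are, by the induction hypothesis applied to the previous singular point. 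In case (b), the argument of claims (2) and (3) of Step \ref{lemConjFreeLine4} generalises: the cyclic relation at the multiple point $q_i$ lets us diffuse the generator $\Gamma_1'$ of $L$ into the cyclic word, but the diffusion is only legal because $\Gamma_1'$ commutes with every non-$L$ generator at $q_i$; this is exactly the place where the hypothesis $\beta(\A)=0$ enters, forcing those commutators to already be present (absence of a cycle through $q_i$ and $p$).

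The main obstacle I expect is case (a) above, combined with the case of a multiple point at which the conic itself passes through the meeting of $L$ with an old multiple point. The Hurwitz move coming from the complex braid $(H(\sigma))^2$ mixes the two conic generators with the line generators sitting between the two projected branch points, and one must verify that after this move the branch-point relation and all the node relations on $L$ retain the form prescribed in Definition \ref{CFGP-CLArr1}. The bookkeeping, although algebraically elementary, is the delicate computation that drives the whole lemma, and $\beta(\A)=0$ is the structural hypothesis that keeps it from collapsing, since any cycle would create a non-commutative obstruction between $\Gamma_1'$ and the conic generators that cannot be absorbed by the Hurwitz move.
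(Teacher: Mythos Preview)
Your outline follows the paper's structure closely and correctly identifies most of the work as a direct adaptation of Lemma~\ref{g3_op_CFpreserve}. Two points, however, differ materially from the paper's argument and one of them is a genuine gap.

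First, the branch-point transition (your case~(a)) is handled far more simply in the paper than you suggest. Passing the basepoint under a branch point amounts to a $90^\circ$ rotation of the two conic points in the fiber; since one can undo this by a diffeomorphism of the disk model that is the identity on generators, the presentation is literally unchanged (this is the paper's Step~\ref{step2CL}). No Hurwitz move, no diffusion of $\Gamma_1'$ is needed here.

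Second, and more importantly, you have mislocated the role of the hypothesis $\beta(\A)=0$. In the analogue of Step~\ref{lemConjFreeLine4}, the genuinely new phenomenon for CL arrangements is not that $\Gamma_1'$ must commute with the non-$L$ generators at $q_i$ (that is already handled by the inductive argument for claim~(2), exactly as in the line case). The new obstruction is that \emph{two} components---the conic $C$ and a line $L'$---can simultaneously pass through both $p$ and $q_i$, so that the isomorphism $f_3'$ conjugates \emph{two} of the generators appearing in the cyclic relation at $p$, not just one. The paper resolves this by first proving an independent intermediate result (Step~\ref{prop_commute}): under $\beta(\A)=0$, the conic generator $x$ commutes with \emph{every} line generator in $\pi_1(\CC^2-\A,u)$. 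This is established by an induction along the tree $G(\A)$, starting from a leaf on the conic. Only after this centrality of $x$ is in hand does the two-affected-generators case reduce to the single-generator case already treated for line arrangements. Your proposal does not isolate this step, and without it the simplification you describe in case~(b) does not go through: you cannot diffuse $\Gamma_1'$ past the conic-generator conjugate $f_3'(x_1)$ unless you already know that the conic generator commutes with the relevant line generators, and that commutation is precisely what Step~\ref{prop_commute} supplies via $\beta(\A)=0$.
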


As in Remark \ref{rem_NotRest},  note that this lemma is not so restrictive as it  seems in a first glance. Taking a real CL arrangement with one conic (ellipse or hyperbola) and some lines such that the only singular points (with respect to the projection) are nodes and the branch points
of the conic, the affine fundamental group is abelian and as the relations induced by the branch points are equalities between generators, this group has a conjugation-free presentation. Thus, according to the lemma, one can add lines passing through at most one singular point and get the desired real CL arrangement (see also  Section \ref{secCFGraphs} for more details).

\begin{proof}
 Note that we can  rotate the line $L$ around the point $p$ until it is almost vertical to the  reference line $\ell$ by operations $h_1$ and $h_2$.  We remind again that whenever we refer to the basepoint $u$, we implicitly assume that $u$ is real. We divide the proof to a number of steps.

  In the first step, we  prove this lemma for only two basepoints $u \in \ell - N$ (where $N$ is the set of the projections of the singular points). In the other steps, we emphasize the changes we have to make in order to modify the proof of Proposition \ref{lemAddLineComProof}(1), so that we can use the corresponding steps as in the proof of that proposition.

We prove this lemma for the case where the conic is either an ellipse or a hyperbola (recall that we do not include  a parabola in our real CL arrangements, see Remark \ref{remGenParab}(2)).

Let $p'$ (resp. $\al$) be the singular point on $L$ with the minimal (resp. maximal) real value of its $x$-coordinate. Note that here
$\al$ is not necessarily a real point (i.e. its coordinates can be complex), as can be seen in case (1) in the proof of Step \ref{step1CL}.

\begin{step} \label{step1CL}
 Lemma \ref{g3_op_CFpreserve_CL} holds for $u = x(\al)+\varepsilon$ or $u = x(p')-\varepsilon$.
\end{step}

\begin{proof}
We divide the proof into two cases. The first case covers the cases where either $C$ is an ellipse and $p$
 is to the left of its two branch points, or  $C$ is a hyperbola and $p$ is between the two branch points. The second case covers the additional two cases.
\medskip
\noindent

\medskip
\noindent
\textbf{Case (1):} This case deals with the case where either $C$ is a hyperbola and $p$ is between the two branch points, or $C$ is an ellipse and $p$
 is to the left of its two branch points (if $p$ is to the right of the two branch points, we can reflect  the arrangement with respect to the $y$-axis and proceed as follows). We can rotate the line $L$ to be a line with a very negative slope, almost vertical to the reference line $\ell$. Thus we get the arrangement in Figure \ref{pointMiddleLineCL_hyper}(a) or the arrangement in Figure \ref{pointMiddleLineCL_hyper}(b). We will work with the arrangement
 in Figure \ref{pointMiddleLineCL_hyper}(a) for notational reasons. However, the only thing that matters is that the line $L$ intersects the conic $C$ in two \emph{complex} intersection points $\alpha,\alpha'$.

\begin{figure}[!ht]
\epsfysize 8cm
\epsfbox{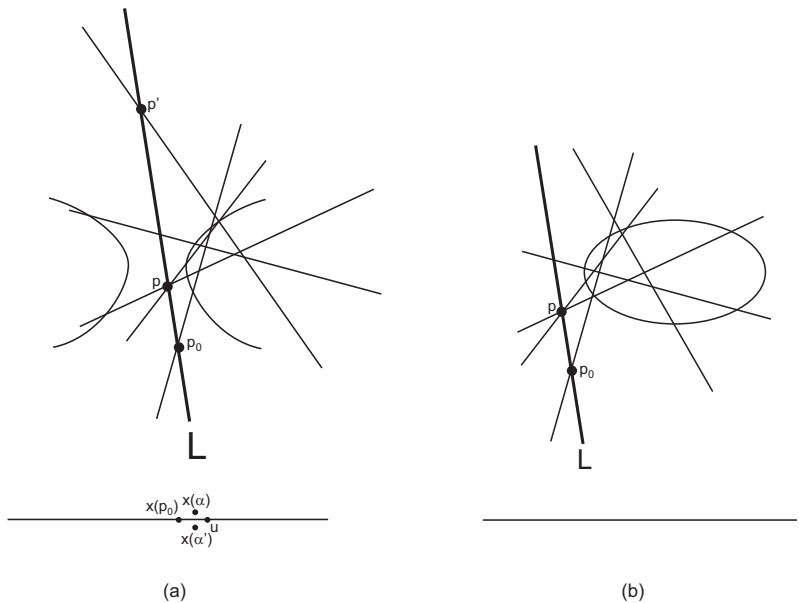}
\caption{First case: an illustration for the real part of $\A  = \A' \cup L$  with a hyperbola (part (a)) or with an ellipse (part (b)).}\label{pointMiddleLineCL_hyper}
\end{figure}

%At this stage, the intersection points $\alpha,\alpha'$ of the line $L$ and the conic $C$ are complex.

 As proved in Lemma \ref{g5_op_CFpreserve_CL}, the rotation of the line $L$ preserves the conjugation-free property (if it exists  before or after the rotation; recall that Lemma \ref{g5_op_CFpreserve_CL} has a local nature, i.e. it does not matter if $C$ is either an ellipse or a hyperbola). Let $p_0$ be the lowest real singular point on the line  $L$ (with respect to the $y$-coordinate). As the slope of $L$ is very negative, we can assume that $x(p_0) < x(\alpha)$ (by computation) and we choose a basepoint $u \in \ell$ such that  $u = x(\alpha)+\varepsilon$. We first prove  that the induced relations of all the singular points on $L$ have no conjugations.

Note that the relations in $\pi_1(\C^2 - \LL,u)$ induced by the complex intersection points $\alpha,\alpha'$ have no conjugations. Indeed,
one can get conjugations if the path,
going from the basepoint to the singular point in question, goes below other singular points. This is not the case here, as the points $\alpha,\alpha'$ are the closest points to $u$.

 Hence, the generator corresponding to the line $L$ commutes with the generators of the conic $C$. In the fiber over $u$ (i.e. the fiber $\CC^1_u = \pi^{-1}(u)$) there are two complex point $y(\beta), y(\beta')$, where $C \cap \pi^{-1}(u) = \{ \beta, \beta' \}$. We
consider the model of this fiber  as a real fiber by rotating  these complex points by $90^{\circ}$ counterclockwise; see Figure \ref{pointMiddleLineCL_hyper3}(a).

\begin{figure}[!ht]
\epsfysize 8cm
\epsfbox{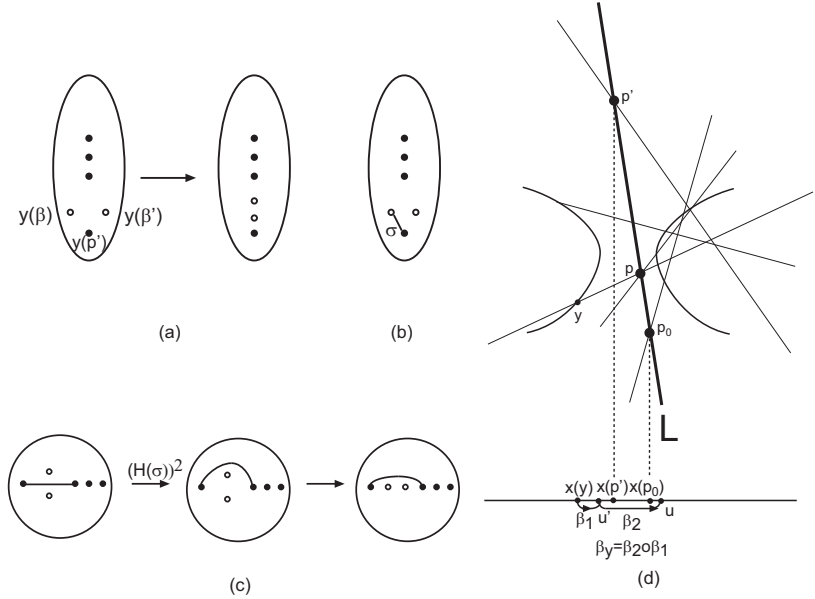}
\caption{Notations for computing the  relations induced by the singular points on $L$ (parts (a),(b),(c)) and for the  proof that the relations induced by the singular points to the left of $L$ have no conjugations (part (d)).}\label{pointMiddleLineCL_hyper3}
\end{figure}

As for the relation induced by the point $p_0$, we have to apply  the braid $(H(\sigma))^2$ on the initial skeleton of $p_0$, where $\sigma$ is drawn in Figure \ref{pointMiddleLineCL_hyper3}(b) (see the proof of Lemma \ref{g5_op_CFpreserve_CL}), and then
rotating the complex points by $90^{\circ}$ counterclockwise. The resulting skeleton is presented in Figure \ref{pointMiddleLineCL_hyper3}(c) and since the generator corresponding to $L$ commutes with the generators corresponding to the hyperbola, the induced relation has no conjugations. A similar computation works for all the other intersection points on $L$.
Thus, we proved that all the intersection points on $L$ induce relations without conjugations.

Note that  the singular points which are to the right of $u$  already appeared in $\A'$ and thus their associated skeletons are the same (when increasing the corresponding indices by 1). Let $p'$ be the highest intersection point on $L$.

We now prove that all the skeletons of the singular
points to the left of $p'$ are the same when computed in $\A$ and in $\A'$, besides the fact that  the indices of the points of the skeletons in $\A$ are increased by $1$ (this is due to the fact that $L$ is numbered as $1$ in the fiber over $u$). Once we  prove that, we can use the fact that $\pi_1(\CC^2 - \A',u)$ is conjugation-free and the methods outlined in part (II)  of the proof of Step \ref{lemConjFreeLine} to simplify these relations and to prove that $\pi_1(\CC^2 - \A,u)$  has a conjugation-free geometric presentation.

In order to prove that, we need additional notations. Numerate the intersection points on $L$,
 starting from the highest real intersection point on $L$, by $p_u = p',\ldots,p_v=p,\ldots,p_1,p_0,\alpha,\alpha'$ (note that all the intersection points are nodes except for $p_v$ which is an intersection point of multiplicity $m$) and denote their corresponding  Lefschetz pairs by

 \begin{itemize}
  \item $s_u = [n-3,n-2],\ldots, s_{v+1}=[v+m,v+m+1],$
  \item $s_v = [v+1,v+m]$,
  \item $s_{v-1} = [v,v+1], \ldots, s_0=[1,2], s_{\alpha},s_{\alpha'}$,
\end{itemize}
 \noindent
  where $n-2$ is the number of lines in $\A$ (recall that we get two additional generators from the hyperbola).

Let $u' = x(p')-\varepsilon$ (see Figure \ref{pointMiddleLineCL_hyper3}(d)).
For every singular point $y \in \A$ which is to the left of $u'$, let $s_y$ be the initial skeleton of $y$. We now  consider the braids which are applied on $s_y$ while going along a path $\beta_y$ starting at $x(y)+\varepsilon$ and ending at $u$. This path can be divided into two parts: the first part $\beta_1$ which starts at $x(y) + \varepsilon$ and ends at $u'$ and the second part $\beta_2$ which starts at $u'$ and ends at $u$.

Let $(s'_y)_1$ (resp. $(s'_y)_2$) be the skeleton of $y$ after we apply on it the braids induced by $\beta_1$ (resp. $\beta_y$) in the arrangement $\A'$ and let
$(s_y)_1$ (resp. $(s_y)_2$) be the corresponding skeleton after we apply on it the braids induced by $\beta_1$ (resp. $\beta_y$) in the arrangement $\A$. %(resp. after we rotating  the complex points in the fiber by $90^{\circ}$ counterclockwise.).
%Let $s'_y$ be the skeleton after we apply on it the braids induced by $\beta_y$ in the arrangement $\A'$ and let
%$s''_y$ be this skeleton after we apply on it the braids induced by $\beta_y$ in the arrangement $\A$ and after we rotating  the complex points in the fiber by $90^{\circ}$ counterclockwise.

Note that the braid induced by the path $\beta_2$ in $\A'$ is $$\Delta \langle v+1,v+m-1 \rangle,$$ as $p \in \A'$ is the only intersection point
(with multiplicity $m-1$) in the section $\{m \in \C^2 : u' < x(m) < u \}$. Now, the braid induced by the path $\beta_2$ in $\A$ is:
$$
B_2 \doteq \Delta \langle n-3,n-2 \rangle \cdots  \Delta \langle v+1,v+m \rangle \cdots \Delta \langle 2,3 \rangle \Delta \langle 1,2 \rangle  (H(\sigma))^2.
$$

Note that the fibers over $x(y) + \varepsilon$ in $\A$ and  in $\A'$ differ only by one point, as  there is an additional point in the fiber  of $\A$, being the highest point (numbered as $n$) which represents the line $L$. Note also that the skeletons $(s'_y)_1$  and $(s_y)_1$ are the same, as the line $L$ does not intersect the other components in the section between $x(y)$ and $u'$. The only difference is that the fiber over $u'$ in $\A$ has an additional point (being the highest point which represents the line $L$), and the skeleton $(s_y)_1$ does not go around this point.

Now, when applying  $B_2$ on $(s_y)_1$, recall that $\beta_2$ is a path beneath all the intersection points of $L$ (that is, there are no intersection points on $L$ outside the section $\{ a \in \C^2 : u' < x(a) < u \}$). Since  $L$ is almost vertical with a negative slope, the braid $B_2$ induces on the point numbered as $n$  in $\A$ (in the fiber over $u'$) the motion $\delta_{n}$, that is, the point $n$ performs a counterclockwise $180^\circ$ rotation, eventually turning into the point numbered as $1$. Moreover, as the point $n$ (in the fiber over $u'$) is not involved in the skeleton $(s_y)_1$, the only braid that does affect  $(s_y)_1$ is $\Delta \langle v+1,v+m \rangle$. At the stage of applying $\Delta \langle v+1,v+m \rangle$ (in $\A$), the point that was numbered as $n$ is then numbered as $v+m$ and is beneath the skeleton $(s_y)_1$. The braid $ \Delta \langle v+1,v+m \rangle$ sends the point $v+m$ to the point $v+1$ and the effect on the skeleton itself is as the effect of $\Delta \langle v+1,v+m-1 \rangle$. This means that the skeletons $(s'_y)_2$ and $(s_y)_2$ are the same, except for that all the indices in $s''_y$ are increased by $1$.  As was indicated above, we can use the same simplification process with the method described in Part (II) of the proof of Step \ref{lemConjFreeLine} to simplify all the relations.

\medskip

As for the case when the basepoint of $\pi_1(\C^2-\A)$ is $u = x(p')-\varepsilon$, the proof follows the same arguments
as above.

\medskip
\noindent
\textbf{Case (2):} This case covers the remaining two situations: either that $C$ is an ellipse and $p$ is between the two branch points or that
$C$ is a hyperbola and $p$ is not between the two branch points.
 As in case (1), we first rotate the line $L$ to be almost vertical with a negative slope.
 In both situations, the line $L$ intersects $C$ in two \emph{real} points and thus these two situations present the same behavior. After the rotation of $L$, we get the arrangement in Figure \ref{pointMiddleLineCL2}(a) (though Figure \ref{pointMiddleLineCL2} presents the case
 of an ellipse rather than a hyperbola, the proof is independent of the model of $C$ in $\RR^2$, since we do not use the fact that $C$ is an ellipse, but rather the fact that $L$ intersects $C$ in two \emph{real} points).

\begin{figure}[!ht]
\epsfysize 8cm
\epsfbox{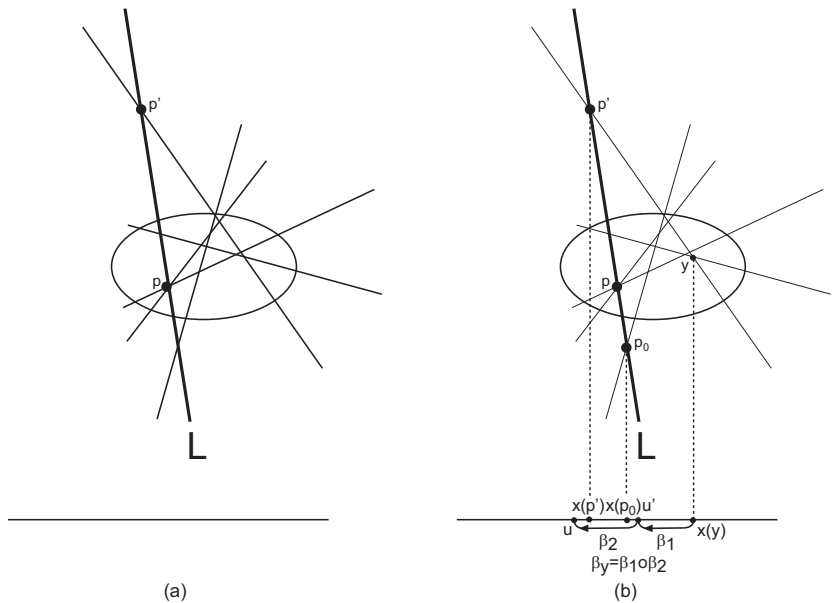}
\caption{Second case: Part (a) is an illustration for the real part of $\A = \A' \cup L$ for  an ellipse. Part (b) depicts notations for the proof of the claim that the relations induced by the singular points to the right of $L$ have no conjugations.}\label{pointMiddleLineCL2}
\end{figure}

Let $p'$ be the highest intersection point of $L$ (with respect to the $y$-coordinate) and
 choose $u \in \ell$ such that $u = x(p')-\varepsilon$, where $0< \varepsilon \ll 1$. We choose the basepoint of $\pi_1(\CC^2 - \A)$ to be $u$.
 Numerate the intersection points on $L$, starting from the lowest point on $L$, by $p_0,\,p_1,\,\ldots,\,p_v=p,\,\ldots,\,p_u=p'$ (note that all the points $p_i$ are nodes except for $p_v$ which is an intersection point of multiplicity $m$) and denote their corresponding  Lefschetz pairs by

 \begin{itemize}
  \item $s_0 = [1,2], s_1 = [2,3], \ldots, s_{v-1} = [v,v+1],$
  \item $s_v = [v+1,v+m]$,
  \item $s_{v+1}=[v+m,v+m+1],\ldots, s_u = [n-1,n]$,
\end{itemize}
% $s_0 = [1,2], s_1 = [2,3], \ldots, s_{v-1} = [v,v+1], s_v = [v+1,v+m],s_{v+1}=[v+m,v+m+1],\ldots, s_u = [n-1,n]$,
\noindent
 where $n-2$ is the number of lines in $\A$ (recall that we get two additional generators from the ellipse). Define  $u' \in \ell$ as $u' = x(p_0)+\varepsilon$.

  We will prove that all the skeletons of the singular points to the left of $u$ or to the right of $u'$ are exactly the same when being computed in $\A$ or in $\A'$. Once we  prove that, combined with the fact that all the relations induced by the singular points on $L$ have no conjugations, then we can use the fact that $\pi_1(\CC^2 - \A',u)$ is conjugation-free and the method described in Part (II) of the proof of Step \ref{lemConjFreeLine} to simplify these relations and to prove that $\pi_1(\CC^2 - \A,u)$
  has a conjugation-free geometric presentation.

  However, the proof now follows exactly the same arguments described in Step \ref{lemConjFreeLine} (indeed, the notations described in Figure \ref{pointMiddleLineCL2}(b) work also for that step), and thus we refer the reader to there.
As for the case when the basepoint of $\pi_1(\C^2-\A)$ is $u = x(p_0)+\varepsilon$, the proof follows the same arguments
as above (see e.g. Step \ref{lemConjFreeLine2}).
\end{proof}

We now prove that moving the basepoint below a projection of a branch point preserves the conjugation-free property.

\begin{step} \label{step2CL}
Let $b$ be a branch point of a conic, and denote $u = x(b)+\varepsilon, u' = x(b)-\varepsilon$. If $\pi_1(\C^2 - \A,u)$ is conjugation-free, then $\pi_1(\C^2 - \A,u')$ is also conjugation-free.
\end{step}

\begin{proof}
Without loss of generality, we can assume that the fiber over $u$ contains $n$ real points, where the points numbered as $c,c+1$ represent the conic, and thus the fiber over $u'$ contains $n-2$ real points and $2$ complex points: indeed, the points numbered as $c,c+1$ perform a $90^\circ$ counterclockwise rotation when passing from $u$ to $u'$. This rotation describes the result of the isomorphism $\pi_1(\C^2 - \A,u) \rightarrow \pi_1(\C^2 - \A,u')$ at the level of the skeletons. However, when we work with a real model $(D,K)$ of the braid group (using  Definition
\ref{defBraidGr}), we can perform any diffeomophism equivalent to the identity and get the same presentation by means of generators and relations. Thus, when taking the basepoint to be $u'$, we can perform on every skeleton a $90^\circ$ clockwise rotation on the points numbered as $c,c+1$. This diffeomorphism is the identity at the level of the generators associated to the conic in $\pi_1(\C^2 - \A,u')$, and therefore we can use the same simplification process as was done in $\pi_1(\C^2 - \A,u)$ for proving that $\pi_1(\C^2 - \A,u')$ is conjugation-free.
\end{proof}

We now prove that $\pi_1(\CC^2 - \A,u)$ has a conjugation-free geometric presentation for every basepoint $u$ that belongs to  one of the following domains:

\begin{itemize}
\item $D_1 = \{ u \in \ell - N : x(p') < u < x(\al) \}$.
\item $D_2 = \{ u \in \ell - N : x(\al) < u \}$.
\item $D_3 = \{ u \in \ell - N : x(p') > u \}$.
\end{itemize}

\begin{step} For $u \in D_1$, $\pi_1(\CC^2 - \A,u)$ has a conjugation-free geometric presentation.  \label{step3CL}
\end{step}

\begin{proof}
The proof is the same as the proof of Step \ref{lemConjFreeLine3}.
\end{proof}

Before proving that $\pi_1(\CC^2 - \A,u)$ has a conjugation-free geometric presentation for the domains $D_2$ and $D_3$, we want
to examine what are the implications that $\pi_1(\CC^2 - \A,u)$ has a conjugation-free geometric presentation for a certain basepoint $u$.
In particular, this means that whenever we have a relation in the presentation of $\pi_1(\CC^2 - \A,u)$, written by the geometric generators which has conjugations of the  generators, these conjugations can be removed.

%\subsection{The structure of the fundamental groups of these CL arrangements}\label{sec_structure}

One important implication of the conjugation-free property is that while the conic induces two geometric generators $x_1, x_2$ in $\pi_1(\C^2 - \A,u)$, the conjugation-free property implies that the relations, coming from \emph{both} branch points, are $x_1=x_2$.
Thus, we can say that not only the conic contributes only one generator in the presentation of $\pi_1(\C^2 - \A,u)$, denoted by $x$, but that there are no
other relations induced by the branch points (apart of $x_1=x_2$). Note that if the presentation is not conjugation-free, we may get
new relations from the branch points which are not $x_1=x_2$ (see e.g. the third step in the proof of \cite[Theorem 4.2]{FG2}).
%see Proposition \ref{prsG3G4}(a), Theorem \ref{prsGnOdd} and especially Remark \ref{rem_CF_str}(1).

\medskip

Recall that we assume that $\beta(\A)=0$ and $ \text{deg}(\A)=n$.

\begin{step}\label{prop_commute}
Assume that $\pi_1(\C^2 - \A,u)$ has a conjugation-free geometric presentation.
Let $\G_{k_1},\dots,\G_{k_{n-2}}$ be the geometric generators associated to the $n-2$ lines of $\A$. Then $[x,\G_i] =e$ for all  $1 \leq i \leq n-2$.
%This holds in either the affine  fundamental group $\pi_1(\C^2 - \A,u)$ or the projective one.
\end{step}

\begin{proof}
First, note that if the conic $C$ intersects a line $L_\alpha$ transversally at two nodes, then $[x,\G_\alpha]=e$, due to the conjugation-free property.
Thus we assume that there is at least one multiple point in $\A$ which the conic passes through it.
Note that $\beta(\A)=0$ implies that the graph $G(\A)$ is a forest. We look at the forest $G(\A)$ and we start from a leaf, assuming that the conic passes through the intersection point of multiplicity $m+1$ that corresponds to this leaf (if not, move to its direct ancestor, i.e. to the next step in the proof).

In this case, the  relations induced by this point are:% (see Theorem \ref{vk_thm} above):
\begin{equation}
\G_{i_m} \cdots \G_{i_1} x = \G_{i_{m-1}} \cdots \G_{i_1} x \G_{i_m} = \cdots =\G_{i_1} x \G_{i_m} \cdots \G_{i_2} = x \G_{i_m} \cdots \G_{i_1},\label{eqnRel1a}
\end{equation}
where $\G_{i_j}$, $1 \leq j \leq m$, are the geometric generators associated to the lines $L_{i_j}$ which pass through this multiple point.
Note that there are no conjugations in the relations, since the  presentation is a conjugation-free. Since we are dealing with a leaf, all the lines (except maybe for one, which corresponds to the edge connected to its direct ancestor) intersect the conic also in a node. Numerate the lines in such a way that $L_{i_1}$ is the line that possibly does not
intersect the conic in a simple point. Again, since the group has a conjugation-free geometric presentation, we have the following relations, induced by these simple points:

\begin{equation} \label{eqnRel2}
[x,\G_{i_j}]=e,\, \text{ for all } 2 \leq j \leq m.
\end{equation}

Therefore, from relations (\ref{eqnRel1a}) and (\ref{eqnRel2}), one can easily deduce that
$$ [x,\G_{i_1}]=e.$$
Indeed, using relations (\ref{eqnRel2}) and the equality $\G_{i_m} \cdots \G_{i_1} x = x \G_{i_m} \cdots \G_{i_1}$ (left hand side and right hand side of relations (\ref{eqnRel1a})), we have  $[x,\G_{i_1}]=e$ as needed.

\medskip

With this data, we can proceed to the immediate upper level of the tree, which means that we proceed to the second multiple point that is on $L_{i_1} \cap C$ (if it exists).

Now, we do the same process as above to the new level, as this point can now be treated as a ``leaf'', i.e. with the same properties regarding the relations in the fundamental group.
In this way, we go over all the vertices of the graph that the conic passes through the corresponding multiple points.
\end{proof}

\begin{remark} \label{remPropComm}
\emph{Note that the essential data that we have used during the proof of Step \ref{prop_commute} is that there is a vertex (a leaf, to be exact)
in the graph, that all the lines, except maybe for one, that passes through it, intersect the conic in a node. We will use that property in order
to generalize Step \ref{prop_commute} for other arrangements whose graph may contain a cycle.}
\end{remark}

We use Step \ref{prop_commute} in order to prove that if $u$ is in the domains $D_2$ and $D_3$, $\pi_1(\CC^2 - \A,u)$ has a conjugation-free geometric presentation too.

\begin{step} For $u \in D_2= \{ u \in \ell - N : x(\al) < u \}$, $\pi_1(\CC^2 - \A,u)$ has a conjugation-free geometric presentation. \label{step4CL}
\end{step}

\begin{proof}
The proof of this step is based on the following three claims:
\begin{enumerate}
\item  All the skeletons induced by the intersection points not on $L$ are the same in $\A$, as if they were computed in $\A'$,
except for that all the indices in $\A$ are increased by $1$.
\item All the relations induced by the nodes on $L$ have no conjugations.
\item The relation induced by $p \in L$ has no conjugations.
\end{enumerate}

The proofs of claims (1) and (2) are exactly the same as the proofs of the corresponding claims  in Step \ref{lemConjFreeLine4}. However, for claim (3), we have to perform some changes in the proof. As in Step \ref{lemConjFreeLine4}, the proof is by induction on the location of $u$, and we use the same notations from that step. Explicitly, let $q_1,\ldots,q_k$ be the singular points in the domain $D_2$, and we assume that $u = x(q_i)-\varepsilon$, $u' = x(q_i)+\varepsilon$, when we know, by the induction hypothesis that the conjugation-free property holds for
$u \in \{ m \in \ell - N: x(q_1)-\varepsilon < m < x(q_i) \}$.
Recall that the Lefschez pair of the point $p \in L$ (in the fiber above $x(p)+\varepsilon$) is $[v+1,v+m]$.

%%%%%%999

When the basepoint is $u$, then the relation induced
by the point  $p \in L$ is:
\begin{equation} \label{relMultiPointCL}
[\G_1,\G_{B_u(v+2)},\G_{B_u(v+3)},\ldots,\G_{B_u(v+m)}]=e,
\end{equation}
due to the conjugation-free property of $\pi_1(\CC^2 - \A,u)$.
Note that when going to the basepoint $u'$, the induced isomorphism is $f_3'$ as described in Equation (\ref{eqnf3'}).
However, now we should note that $f_3'$ can affect at most \emph{two} of the generators $\G_{B_u(x)},\, v+2 \leq x \leq v+m$.
This is different from the situation in line arrangements, where  it is impossible that two
\emph{different} lines will pass both through $p$ and   $q_i$. On the other hand, in CL arrangements, the conic and  a line $L'$
can \emph{both} pass through $p$ and $q_i$ (see Figure \ref{twoGen}). Hence, two different  geometric generators, in the relation induced by $p$, might be affected by $f_3'$.

\begin{figure}[!ht]
\epsfysize 5cm
\epsfbox{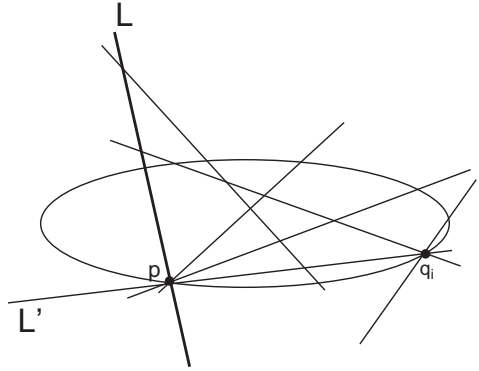}
\caption{The case where the conic and a line $L'$, which pass through $p$, pass  through another singular point $q_i$ too.}\label{twoGen}
\end{figure}

We now divide the proof into three cases: when $f_3'$ does not affect any of the generators $\G_{B_u(x)}$, when it affects only one of them and when it affects two of them. As for the first two cases, the proof is the same as the proof of cases (1) and (2) of claim (3) in the proof of Step \ref{lemConjFreeLine4}. We now deal with the third case.

Recall that the conic contributes two generators to $\pi_1(\C^2-\A,u)$, denoted by $x_1,x_2$. Assume that in the fiber over $u$, the local numeration of the components intersecting in $q_i$ is $a,a+1,\ldots,a+s=b$, where $1<a$ (since the line $L$ is numbered as $1$ in the fiber over $u$). Without loss of generality, assume that over $u$, it is the generator $x_1$ that participates in the relation (\ref{relMultiPointCL}), such that $x_1$ is equal to $\G_{a+t}$, where $0 \leq t \leq s$. Since $\pi_1(\C^2-\A,u)$ is conjugation-free, we know that $x_1=x_2$ in $\pi_1(\C^2-\A,u)$. Moreover, by Step \ref{prop_commute}, we know that $[\G_i,x_2]=e$ for any generator $\G_i$ associated a line in $\A$.

Thus, in $\pi_1(\C^2-\A,u')$,
\begin{equation} \label{eqnf3'x1} f_3'(x_1) = f_3'(\G_{a+t})  \doteq {\Ga' _b} \cdots {\Ga'_{b-t+1}} \Ga_{b-t}' {\Ga'^{-1}_{b-t+1}} \cdots {\Ga _b'}^{-1},\end{equation}
and $f_3'(x_2) = x_2'$. Therefore, the relation $x_1=x_2$ is mapped in $\pi_1(\C^2-\A,u')$ to $f_3'(x_1)=f_3'(x_2)$, which is:
\begin{equation}\label{relEqlConic}
{\Ga' _b} \cdots {\Ga'_{b-t+1}} \Ga_{b-t}' {\Ga'^{-1}_{b-t+1}} \cdots {\Ga _b'}^{-1} = x_2'.
\end{equation}

Now, in $\pi_1(\C^2-\A,u)$, we know that $[\G_a,x_2]=e.$ Thus, in  $\pi_1(\C^2-\A,u')$ we have that
$$
[f_3'(\G_a),f_3'(x_2)]=e  \,\, \rightarrow \,\, [\G_b',x_2']=e.
$$
Also, the existing relation $[\G_{a+1},x_2]=e$  in $\pi_1(\C^2-\A,u)$ is transformed  in $\pi_1(\C^2-\A,u')$ to:
$$
[f_3'(\G_{a+1}),f_3'(x_2)]=e \rightarrow [\G_b'\G_{b-1}'{\G_b'}^{-1},x_2']=e \overset{[\G_b',x_2']=e}{\rightarrow} [\G_{b-1}',x_2']=e.
$$
In the same way, we get that $[\G_j',x_2']=e$ for every $b-t+1 \leq j \leq b$ in $\pi_1(\C^2-\A,u')$. Thus, relation (\ref{relEqlConic}) is simplified to $\G_{b-t}' = x_2'$ and therefore, $e = [\G_j', x_2']=[\G_j',\G_{b-t}']$ for any $b-t+1 \leq j \leq b$. Thus, by Equation (\ref{eqnf3'x1}),
\begin{equation}\label{eqnGammaAt}
f_3'(\G_{a+t}) = \Ga_{b-t}'.
\end{equation}

Now, we deal with the behavior of the generator associated to the line $L'$ passing through $p$ and $q_i$, under the isomorphism $f_3'$.
 We would like to prove that if this generator is $\G_{a+y}$, then $f_3'(\G_{a+y})$ is a conjugation of $\G'_{b-y}$ by other geometric generators which  commute with $\G_1'$.

 We split the treatment into two cases.

If the line $L'$ is numbered  over $u$ as  $a' \doteq a+t-m,$ for $m>0$, then
$$
f_3'(\G_{a+t-m})   \doteq {\Ga' _b} \cdots {\Ga'_{b-t+m+1}} \Ga_{b-t+m}' {\Ga'^{-1}_{b-t+m+1}} \cdots {\Ga _b'}^{-1}.
$$
The conjugating generators of $\Ga_{b-t+m}'$ correspond to lines that intersect the line $L$ in nodes, and thus (by claim (2)) the generator $\G_1'$ commutes with all of them in $\pi_1(\C^2-\A,u')$.

If this line is numbered  over $u$ as  $a' \doteq a+t+m,m>0$, then
$$
f_3'(\G_{a+t+m})   \doteq {\Ga' _b} \cdots {\Ga'_{b-t-m+1}} \Ga_{b-t-m}' {\Ga'^{-1}_{b-t-m+1}} \cdots {\Ga _b'}^{-1}.
$$
Note that one of the conjugating generators of $\Ga_{b-t-m}'$ is $\Ga_{b-t}'$. By Step \ref{prop_commute}, we know that $[\G_1,x_2]=e$ in  $\pi_1(\C^2-\A,u)$.
Thus $$e= [f_3'(\G_1),f_3'(x_2)] = [\G_1',x_2'] \overset{\G_{b-t}'=x_2'}{=} [\G_1',\G_{b-t}']$$ in  $\pi_1(\C^2-\A,u')$. Therefore, using again the same argument above,  $\G_1'$ commutes with all of conjugating elements of $\Ga_{b-t+m}'$.

To conclude, by Equation (\ref{eqnGammaAt}), this means that relation
(\ref{relMultiPointCL}) in  $\pi_1(\C^2-\A,u')$   is turned to
$$
[\G'_1,\G'_{B_u(v+2)},\G'_{B_u(v+3)},\ldots, \G'_{a'-1} , f_3'(\G'_{a'}) ,\G'_{a'+1}, \ldots,\G'_{B_u(v+m)}]=e,
$$
where $f_3'(\G'_{a'})$ equals to a conjugation of a geometric generator with generators, which commute with $\G'_1$. Therefore, we can now proceed exactly as described in the last paragraphs of the proof of Step \ref{lemConjFreeLine4}. \end{proof}

The proof for $u \in D_3$  follows the same arguments as in the case for real line arrangements (see Step \ref{lemConjFreeLine5}),
and therefore, we have proved that $\pi_1(\C^2-\A,u)$ is conjugation-free for any $u \in \ell - N$. \end{proof}

This completes the proof of Proposition \ref{lemAddLineComProof}(2).

%
%\begin{remark}
%Note that this proof does not require that  change of the model of the conic we are given (i.e. by choosing a different line at infinity while working on the model in $\RR^2$).
%\end{remark}

\section{Results and Applications} \label{secResAndApp}

In the following two subsections, we would like to investigate the cases where $\beta(\A)=0$ or $\beta(\A)=1$ for a CL arrangement $\A$. The first subsection concentrates on the case where $\beta(\A)=0$, examining the structure of the corresponding fundamental group.
The second subsection concentrates on the case where $\beta(\A)=1$, where the conic does not pass through all the multiple points which correspond to the vertices of the graph.

Moreover, we are especially interested to find the cases where the presentation and the structure of the affine fundamental group can be directly read  from the graph (i.e. the lattice of the arrangement determines the fundamental group of its complement), so one of our aims in this section is to find out for which real CL arrangements, the fundamental group is either abelian or conjugation-free.

\subsection{The case of a graph with no cycles} \label{secNoCycles}

In this section, we prove Theorem \ref{main_result}. We first prove that the fundamental groups of the complements of these CL arrangements (with one conic), whose graph has no cycles,
have a conjugation-free geometric presentation (see Proposition \ref{prsBetaZero}), and then we find out the structure of these fundamental groups.

\medskip

Using Proposition \ref{lemAddLineComProof}(2) inductively, we have the following proposition:

\begin{prs}\label{prsBetaZero}
Let $\A$ be a real CL arrangement with one conic and $k$ lines.  Suppose that $\beta (\A)=0$. Then  $\pi_1 (\CC^2 - \mathcal A)$ has a conjugation-free geometric presentation.
\end{prs}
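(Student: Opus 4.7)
The plan is to argue by induction on the number of lines $k$ in $\A$, invoking Proposition \ref{lemAddLineComProof}(2) as the inductive step. The base case would be $k=0$, where $\A$ is just a smooth conic $C$. The geometric presentation of $\pi_1(\C^2-C,u)$ has two generators $x_1,x_2$ coming from the two punctures of the fiber $\CC^1_u$ by $C$, and the two branch points each impose the relation $x_1=x_2$. These relations are conjugation-free in the sense of Definition \ref{CFGP-CLArr1}, and clearly remain so for every real choice of basepoint $u\in\ell-N$, so the base case holds in the strong ``every basepoint'' form that Proposition \ref{lemAddLineComProof}(2) requires as input.

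For the inductive step, the task is to locate, inside any $\A$ with $k\geq 1$ lines and $\beta(\A)=0$, a line $L\in\A$ that passes through at most one multiple point of $\A$; then $\A':=\A\setminus L$ satisfies the hypotheses of Proposition \ref{lemAddLineComProof}(2) and the conclusion follows. To find such an $L$, I would use the fact that $\beta(\A)=0$ forces the graph $G(\A)$ to be a forest. If $G(\A)$ has no vertices, all intersections in $\A$ are nodes (or transversal intersections with the conic), and any line $L\in\A$ passes through no multiple point, so we are done. Otherwise $G(\A)$ has a leaf (or an isolated vertex) $v$, corresponding to a multiple point $p$ of multiplicity $m\geq 3$; the number of lines of $\A$ through $p$ is either $m$ or $m-1$, according as the conic misses or meets $p$, and in either case is at least $m-1\geq 2$. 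Since $\deg_v G(\A)\leq 1$, at most one line through $p$ produces an edge of $G(\A)$, hence at least one line $L$ through $p$ meets no other multiple point of $\A$; this is the desired line.

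Having chosen $L$, I would set $\A':=\A\setminus L$ and observe that removing a line can only decrease multiplicities and erase edges, so $G(\A')$ remains a forest; thus $\beta(\A')=0$ and $\A'$ has $k-1$ lines, allowing the induction hypothesis to furnish a conjugation-free geometric presentation of $\pi_1(\C^2-\A',u)$ for every real basepoint $u\in\ell-N$. Since $L$ meets $\A'$ in at most one singular point (namely $p$) and $\beta(\A'\cup L)=\beta(\A)=0$, Proposition \ref{lemAddLineComProof}(2) yields that $\pi_1(\C^2-\A,u)$ has a conjugation-free geometric presentation for every real $u$, closing the induction. The genuinely hard work is of course already absorbed in Proposition \ref{lemAddLineComProof}(2), whose proof fills the previous two sections; the only thing to be careful about here is the basepoint-independence, which is exactly why the base case has been stated in the ``for every real basepoint'' form and why the inductive hypothesis matches the hypothesis of Proposition \ref{lemAddLineComProof}(2) verbatim.
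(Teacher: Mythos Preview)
Your proof is correct and follows essentially the same approach as the paper: both arguments exploit the forest structure of $G(\A)$ to build $\A$ inductively one line at a time, invoking Proposition~\ref{lemAddLineComProof}(2) at each step. The only cosmetic difference is that the paper takes as its base case the conic together with all lines not passing through any multiple point (citing Oka--Sakamoto for abelianness), and then adds lines by traversing each tree from root to leaves, whereas you take the bare conic as the base and peel off a line at a leaf; the inductive content is identical.
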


\begin{proof}
First, note that by Lemma \ref{g3_op_CFpreserve_CL}, the conjugation-free geometric presentation is independent of the basepoint for this class of CL arrangements (assuming that the basepoint is real).
Note that $\beta(\A)=0$ implies that the graph $G(\A)$ is a forest. Hence, the CL arrangement can be constructed inductively according to the graph (see an example in Figure \ref{graphBuilding}): first draw the conic and all the lines that do not contribute to the graph (i.e. the lines that do not pass through any multiple point).
Obviously, the fundamental group of this arrangement has a conjugation-free geometric presentation for every real basepoint (since all the components are in general position, the fundamental group of the affine complement is abelian, due to \cite{OkSa}). Now start from the  root of one of the trees, i.e. draw all the lines that correspond to the edges connected to this root (see Figure \ref{graphBuilding}(a)). By Proposition \ref{lemAddLineComProof}(2), the conjugation-free property is preserved. In the following steps, construct the rest of the arrangement by going to the direct successors of the tree's root, and drawing the corresponding lines (see Figures \ref{graphBuilding}(b) and \ref{graphBuilding}(c)). Note that since at each step, we draw a single line passing through only one intersection point and there are no cycles in the resulting graph, the conjugation-free property is preserved. When comparing the resulting arrangement to the original one, the only lines that can be missing are lines that pass only through one multiple point. Thus adding these lines will again preserve the conjugation-free property.

\begin{figure}[!ht]
\epsfysize 3.5cm
\epsfbox{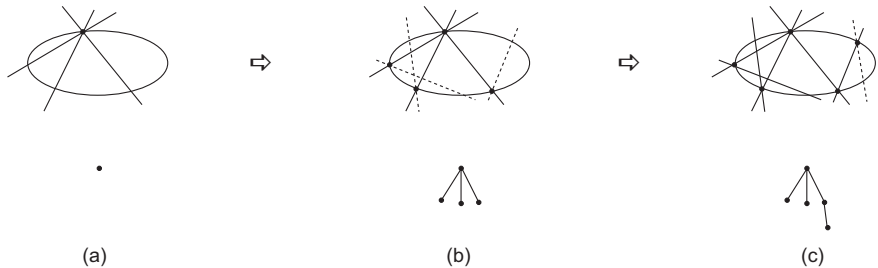}
\caption{An example for an inductive construction of the CL arrangement according to the graph (see Proposition \ref{prsBetaZero}): in step (a), we draw the conic and three more lines which create a multiple point on the conic, which corresponds to the tree's root. In step (b), we add three (dotted) lines creating three new multiple points, thus the associated graph of the arrangement is a tree with a root and three successors. In step (c), we add another (dotted) line creating a new multiple point, which corresponds to a new successor in the tree.}\label{graphBuilding}
\end{figure}

Now, we do the same process to any other tree in the graph, if any. As this process is finite, we are done.
\end{proof}

Now, we can prove Theorem \ref{main_result}.

\begin{proof}[Proof of Theorem \ref{main_result}]
By Step \ref{prop_commute} (which now holds independently of the basepoint $u$), we can conclude that:
%$$\pi_1(\C^2 - \A) \simeq \langle x \rangle \oplus \pi_1(\C^2 - (\A-C))$$  and
$$\pi_1(\C^2 - \A) \cong \langle x \rangle \oplus \pi_1(\C^2 - (\A-C)),$$
where $x$ is the generator of the conic.
Thus, it remains to prove that %$\pi_1(\C^2 - (\A-C))$ and
$\pi_1(\C^2 - (\A-C))$ is isomorphic to a direct sum of free groups and a free abelian group. However, this is straight-forward, since  $\beta(\A)=0$ implies that
$\beta(\A-C)=0$. Now, since $\A - C$ is an arrangement of lines, we can use Fan's result \cite{Fa2} that the fundamental group of an arrangement of lines whose graph has no cycles is a direct sum of free groups and a free abelian group.

Explicitly, this means that:
$$\pi_1 (\CC  ^2 - \mathcal A) \cong \ZZ^r \oplus \bigoplus\limits_{i=1}^p \FF_{m(a_i)-2} \oplus \bigoplus\limits_{i=1}^q \FF_{m(b_i)-1},$$
where $r=k+2p+q+1-\sum\limits_{i=1}^p m(a_i)- \sum\limits_{i=1}^q m(b_i)$.

\end{proof}
%
%By  \cite{Gar} and the fact that $\A$ contains at least one line, we have that:
%$$\pi_1 (\CC ^2 - \mathcal A) \cong \ZZ \oplus \pi_1 (\CC \PP ^2 - \mathcal A).$$
%Therefore, we have that the affine fundamental group of $\A$ is also a direct sum of free groups and a free abelian group.

As a result, we have an immediate corollary stating when the fundamental group is abelian.
\begin{corollary}
Let $\A$ be a real CL arrangement with one conic $C$ with only branch points, nodes and triple points as singularities.
Assume that $\beta (\A)=0$ and all the triple points are on the conic. Then  $\pi_1 (\CC^2 - \mathcal A)$ is abelian.
\end{corollary}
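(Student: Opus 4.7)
The plan is to derive the corollary as a direct specialization of Theorem \ref{main_result}. Since $\beta(\A) = 0$ and $\A$ has one conic, Theorem \ref{main_result} applies and gives the decomposition
$$\pi_1(\CC^2 - \A) \cong \ZZ^r \oplus \bigoplus_{i=1}^p \FF_{m(a_i)-2} \oplus \bigoplus_{i=1}^q \FF_{m(b_i)-1},$$
where the $a_i$ are multiple points on the conic and the $b_i$ are multiple points off the conic. So the task reduces to showing that under the extra assumption (only branch points, nodes and triple points, with all triple points on the conic), every free factor in this decomposition is cyclic.

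First, I would observe that since the only singularities are branch points, nodes and triple points, the only \emph{multiple} points (multiplicity $\geq 3$) are the triple points. By hypothesis, every triple point lies on the conic, so the index set for the $b_i$'s is empty, i.e.\ $q=0$, and every $a_i$ satisfies $m(a_i) = 3$. Plugging these values into the decomposition, each factor $\FF_{m(a_i)-2}$ becomes $\FF_1 \cong \ZZ$. Thus
$$\pi_1(\CC^2 - \A) \cong \ZZ^r \oplus \ZZ^p \cong \ZZ^{r+p},$$
which is visibly abelian.

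There is essentially no obstacle here: the corollary is a bookkeeping consequence of Theorem \ref{main_result}, once one verifies that the numerical hypotheses force each free rank to drop to $1$. The only minor point to check is that the hypothesis ``only branch points, nodes and triple points as singularities'' indeed excludes the existence of any $b_i$-type multiple point (i.e.\ any multiple point off the conic of multiplicity $\geq 3$); but this is immediate from the assumption that all triple points lie on the conic and that there are no multiple points of higher order.
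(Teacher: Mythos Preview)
Your proof is correct and matches the paper's approach exactly: the paper states this as an immediate corollary of Theorem \ref{main_result} without further argument, and your bookkeeping (namely $q=0$ and $m(a_i)=3$ forcing each free factor to be $\FF_1 \cong \ZZ$) is precisely the verification that makes the corollary immediate.
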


\medskip

There are some immediate consequences of Theorem \ref{main_result}, using the following decomposition theorem of Oka and Sakamoto \cite{OkSa}:
\begin{thm}[Oka-Sakamoto]\label{OkaSakamoto}
Let $C_1$ and $C_2$ be algebraic plane curves in $\C ^2$.
Assume that the intersection $C_1 \cap C_2$
consists of distinct $d_1 \cdot  d_2$ points, where $d_i \ (i=1,2)$ are the
respective degrees of $C_1$ and $C_2$. Then:
$$\pi _1 (\C ^2 - (C_1 \cup C_2)) \cong \pi _1 (\C ^2 -C_1) \oplus \pi _1 (\C ^2 -C_2)$$
\end{thm}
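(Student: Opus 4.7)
The plan is to prove the Oka--Sakamoto theorem by combining the braid monodromy / Zariski--van Kampen machinery (already surveyed in Section \ref{bm_sec}) with the Bezout count $\#(C_1\cap C_2)=d_1d_2$. Choose a projection $\pi:\C^2\to\C^1$ which is simultaneously generic with respect to $C_1$, $C_2$ and $C_1\cup C_2$. Over a generic fiber $\C^1_u$ take geometric generators: $d_1$ meridians $\alpha_1,\dots,\alpha_{d_1}$ of $C_1$ and $d_2$ meridians $\beta_1,\dots,\beta_{d_2}$ of $C_2$. By Zariski--van Kampen the set $N$ of critical values of $\pi|_{C_1\cup C_2}$ splits into three disjoint parts: the critical values coming from $C_1$ alone (which contribute precisely the defining relations of $\pi_1(\C^2-C_1)$ on the $\alpha$'s), those coming from $C_2$ alone (the defining relations of $\pi_1(\C^2-C_2)$ on the $\beta$'s), and the $d_1d_2$ images of the intersection points $C_1\cap C_2$.

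Next I would analyze the relation induced by a point $p\in C_1\cap C_2$. Since by hypothesis $\#(C_1\cap C_2)=d_1d_2$ attains the Bezout bound, each $p$ is a transverse node of $C_1\cup C_2$ where exactly one branch of $C_1$ meets exactly one branch of $C_2$. Hence the Zariski--van Kampen relation at $p$ is a single commutator $[a_p,b_p]=e$, where $a_p$ (resp.\ $b_p$) is a loop built from the $\alpha_i$'s (resp.\ $\beta_j$'s) only, obtained by conjugating one $\alpha_i$ (resp.\ one $\beta_j$) by a word in the $\alpha$'s (resp.\ $\beta$'s) that records the skeleton transport from $u$ to $p$. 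The key observation is that the conjugating prefixes lie entirely inside the respective subgroups $\langle\alpha_1,\dots,\alpha_{d_1}\rangle$ and $\langle\beta_1,\dots,\beta_{d_2}\rangle$, since the transport under $\pi|_{C_1\cup C_2}$ separately permutes the $\alpha$-strands and the $\beta$-strands when the two projections are chosen generically.

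The main combinatorial step is then to show that the $d_1d_2$ single commutator relations $[a_p,b_p]=e$, taken together with the relations internal to $\pi_1(\C^2-C_1)$ and $\pi_1(\C^2-C_2)$, imply the full set $[\alpha_i,\beta_j]=e$ for every $i,j$. For this I would use the standard fact (from the Zariski--van Kampen analysis for an irreducible plane curve) that in $\pi_1(\C^2-C_k)$ the meridians of a single irreducible component are all conjugate, so each $a_p$ is conjugate to some $\alpha_i$ inside $\langle\alpha_i\rangle$'s subgroup and similarly each $b_p$ to some $\beta_j$; then a counting argument using Bezout shows that the bipartite graph recording which $(\alpha_i,\beta_j)$ pairs become commuting via the relations is in fact the complete bipartite graph. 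Once every $\alpha_i$ commutes with every $\beta_j$, the presentation for $\pi_1(\C^2-(C_1\cup C_2))$ visibly splits as the free product modulo commuting all $\alpha$'s with all $\beta$'s, i.e.\ as the direct sum $\pi_1(\C^2-C_1)\oplus \pi_1(\C^2-C_2)$.

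The hard part will be the last step: rigorously verifying that the particular $d_1d_2$ commutators produced by the braid monodromy really do suffice to force the full commutation between the two subgroups, rather than some smaller normal closure. My strategy to overcome this is to exploit two facts simultaneously --- first, the Hurwitz action of the braid group on the ordered tuple of commutators is transitive enough (because the corresponding braid word is the full twist, by Bezout) to move any $\alpha_i$ past any $\beta_j$; and second, any meridian of a given irreducible component of $C_k$ is a conjugate of any other, so one only needs commutation to hold for one representative pair per pair of components. Combining these with the internal relations in each $\pi_1(\C^2-C_k)$ closes the argument.
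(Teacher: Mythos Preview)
The paper does not prove this theorem; it is quoted as a result of Oka and Sakamoto \cite{OkSa} and used as a black box, so there is no in-paper argument to compare against.

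Your braid-monodromy outline has a genuine gap at the point you yourself flag as ``the key observation'': that for each node $p\in C_1\cap C_2$ the transported meridians satisfy $a_p=w\,\alpha_i\,w^{-1}$ with $w\in\langle\alpha_1,\dots,\alpha_{d_1}\rangle$ and $b_p=w'\beta_j\,w'^{-1}$ with $w'\in\langle\beta_1,\dots,\beta_{d_2}\rangle$. Your justification is that the monodromy ``separately permutes the $\alpha$-strands and the $\beta$-strands''. That statement is true at the level of the underlying permutation, but it does \emph{not} control the conjugating words. When you carry the local skeleton at $p$ back to the base fiber you must apply the Lefschetz diffeomorphisms of all intervening critical values, and among these are the half-twists coming from the other $d_1d_2-1$ mixed nodes; each such half-twist interchanges an $\alpha$-point with a $\beta$-point and makes the resulting path wind around points of both colours. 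Consequently $w$ and $w'$ are in general mixed words in the $\alpha$'s and $\beta$'s, and your clean bipartite picture collapses: you are left with $d_1d_2$ relations $[\,w_p\alpha_{i_p}w_p^{-1},\,w'_p\beta_{j_p}{w'_p}^{-1}\,]=e$ with uncontrolled $w_p,w'_p$, and extracting the full commutation $[\alpha_i,\beta_j]=e$ from these is precisely the content of the theorem, not a bookkeeping exercise.

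Your proposed rescue via ``Hurwitz transitivity because the total word is the full twist'' and ``meridians of an irreducible component are all conjugate'' does not close this gap. The first fact concerns the braid-monodromy factorization, not the group relations it induces; the second gives conjugacy only inside $\pi_1(\C^2-C_k)$, i.e.\ after the $\beta$'s (resp.\ $\alpha$'s) have already been killed, which is exactly what you cannot yet assume. The original Oka--Sakamoto proof bypasses all of this by a direct topological argument (analyzing the inclusion-induced map to the product and its kernel) rather than by manipulating a Zariski--van Kampen presentation.
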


We state the consequence for the case of CL arrangements with two conics, but the general case is straight-forward.

\begin{corollary} \label{corCFtwoCon}
Let $\A$ be a real CL arrangement with two conics and $k$ lines.
 Assume that the conics intersect each other transversally and
$\beta (\A)=0$. For $i=1,2$, let
$V_i$ be the set of vertices of $G(\A)$ whose corresponding points lie  on the conic $C_i$. If $G(\A)$ is a disjoint union of two graphs $G_1,G_2$ such that $V_i \subset G_i$, then  $\pi_1 (\CC^2 - \mathcal A)$ is a direct sum of free groups and a free abelian group.
\end{corollary}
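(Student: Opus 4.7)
The plan is to reduce the corollary to a combination of Theorem \ref{OkaSakamoto} (Oka--Sakamoto) and Theorem \ref{main_result}, by splitting $\A$ into two CL sub-arrangements, each with one conic and a graph with no cycles.

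First I would use the hypothesis $G(\A)=G_1\sqcup G_2$ with $V_i\subset G_i$ to partition the lines of $\A$ into two disjoint families $\mathcal L_1$ and $\mathcal L_2$, where $\mathcal L_i$ consists of those lines whose multiple points in $\A$ (if any) lie in $G_i$; lines with no multiple points can be assigned to $\mathcal L_1$ arbitrarily. The key observation making this partition well-defined is that no line of $\A$ can carry multiple points from both $G_1$ and $G_2$: otherwise the segment between two such multiple points would be an edge of $G(\A)$ connecting $V(G_1)$ to $V(G_2)$, contradicting the disjointness. For the same reason, a non-conic multiple point of $\A$ in $G_i$ has only lines of $\mathcal L_i$ passing through it. Then I define $\A_i:=C_i\cup\mathcal L_i$ and note that, by the same edge-connectivity argument, $G(\A_i)=G_i$ as subgraphs, so $\beta(\A_i)=0$.

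The next step is to check that $\A_1$ and $\A_2$ meet transversally in exactly $\deg(\A_1)\cdot\deg(\A_2)$ distinct points, which is what Theorem \ref{OkaSakamoto} requires. The intersections split into four families: $C_1\cap C_2$, $C_1\cap\mathcal L_2$, $C_2\cap\mathcal L_1$, and $\mathcal L_1\cap\mathcal L_2$. A direct count shows these contribute $4+2|\mathcal L_2|+2|\mathcal L_1|+|\mathcal L_1||\mathcal L_2|=(2+|\mathcal L_1|)(2+|\mathcal L_2|)$ points, matching the B\'ezout total, so it suffices to prove each intersection point is a node of $\A$ (not a multiple point). Any coincidence would force a component of $\A_1$ and a component of $\A_2$ to share a multiple point of $\A$, which would put a vertex simultaneously in $V_1\cup G_1$ and in $V_2\cup G_2$ (or place a non-conic multiple point of $G_i$ on a line of $\mathcal L_{3-i}$), contradicting $G_1\cap G_2=\emptyset$. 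Transversality then follows from Assumption \ref{assume} together with the transverse intersection of the two conics.

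Having established the hypotheses of Oka--Sakamoto, I get
\[
\pi_1(\C^2-\A)\;\cong\;\pi_1(\C^2-\A_1)\oplus\pi_1(\C^2-\A_2).
\]
Each $\A_i$ is a real CL arrangement with one conic and $\beta(\A_i)=\beta(G_i)=0$, so Theorem \ref{main_result} applies and each summand is a direct sum of free groups and a free abelian group. The direct sum of two such groups is again of that form, which gives the conclusion.

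The main obstacle I expect is the bookkeeping in the second paragraph: carefully verifying that none of the B\'ezout intersections $\A_1\cap\A_2$ is accidentally a multiple point of the full arrangement $\A$. This requires ruling out every possible way a third component could pass through an intersection point, and each case is closed off only by invoking the disjointness $G_1\cap G_2=\emptyset$ together with the placement rule $V_i\subset G_i$. Once this verification is in place, the rest is an immediate application of the two cited theorems.
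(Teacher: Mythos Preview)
Your proposal is correct and follows precisely the approach the paper intends: the corollary is stated immediately after Oka--Sakamoto's theorem as an application of it together with Theorem~\ref{main_result}, and you have supplied the details (the partition of the lines, the verification of the B\'ezout count, and the check that $G(\A_i)=G_i$) that the paper leaves implicit. There is nothing to add.
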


\medskip

\begin{remark}
{\rm
The simplest case of a CL arrangement $\A$ with two conics, where we cannot apply Corollary \ref{corCFtwoCon}, is presented in Figure
\ref{exampleNonDisGraph}.
\begin{figure}[!ht]
\epsfysize 4cm
\epsfbox{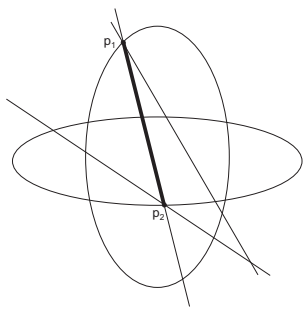}
\caption{The graph of the arrangement $\A$ is a single edge connecting the two vertices corresponding to the triple points $p_1, p_2$.}\label{exampleNonDisGraph}
\end{figure}
However, a direct calculation shows that  $\pi_1 (\CC^2 - \mathcal A)$ has a conjugation-free geometric presentation and thus abelian (e.g. $\pi_1 (\CC^2-\mathcal A)~\cong~\ZZ^5$). Therefore, it is reasonable to conjecture the following:

\begin{conjecture}
Let $\A$ be a real CL arrangement with $n$ conics and $k$ lines, where for each pair of conics, the two conics intersect each other transversally and neither a line nor another conic passes through those intersection points.  Suppose that $\beta (\A)=0$. Then  $\pi_1 (\CC^2 - \mathcal A)$  is a direct sum of free groups and a free abelian group.
\end{conjecture}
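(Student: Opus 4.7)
The plan is to reduce the conjecture to a combination of the conjugation-free property and an application of Fan's theorem. More precisely, I would show that $\pi_1(\CC^2 - \A)$ admits a conjugation-free geometric presentation in which the generator associated to each conic commutes with every other generator, so that the group decomposes as
$$\pi_1(\CC^2 - \A) \cong \ZZ^n \oplus \pi_1(\CC^2 - \A_{\mathrm{lines}}),$$
where $\A_{\mathrm{lines}}$ denotes the sub-arrangement consisting of the lines of $\A$; Fan's theorem (Proposition \ref{Fan}) then yields the desired direct sum structure.

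The first key step is to extend Step \ref{prop_commute} to the multi-conic setting. Fix a conic $C_j$ of $\A$, let $V_j$ be the set of multiple points of $\A$ lying on $C_j$, and define an auxiliary graph $H_j$ on vertex set $V_j$ in which $v,v' \in V_j$ are joined by an edge whenever some line of $\A$ passes through both. A cycle in $H_j$ would lift to a cycle in $G(\A)$, since its edges correspond to paths in $G(\A)$ along distinct lines meeting only at endpoints; hence $\beta(\A) = 0$ forces $H_j$ to be a forest, which therefore admits a vertex $v$ of degree at most one. At such a $v$, all lines through $v$ except possibly one meet $C_j$ again at a simple node, and the argument of Step \ref{prop_commute} applies verbatim (as anticipated in Remark \ref{remPropComm}) to yield $[x_j, y_i] = e$ for every line generator $y_i$ through $v$; propagating along $H_j$ then gives $[x_j, y_i] = e$ for every line of $\A$. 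The commutation $[x_j, x_{j'}] = e$ between distinct conic generators follows directly from the hypothesis, as each pair of conics contributes simple nodes whose induced commutation relations, combined with the branch-point identifications of the two generators of each conic, collapse to the required result.

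With the extended commutation in hand, the proof of Proposition \ref{lemAddLineComProof}(2) should generalize to the multi-conic setting: it is local to the added line $L$ and uses only the commutation of conic generators with line generators, while the operation $h_4$ may be performed successively around each conic to handle possible tangencies during the rotation of $L$. I then build $\A$ inductively as in the proof of Proposition \ref{prsBetaZero}. Let $\A_0$ consist of the $n$ conics together with all lines of $\A$ that do not contribute to any multiple point; then all singular points of $\A_0$ are simple nodes (conic-conic by hypothesis, conic-line and line-line by construction), so iterated application of Oka-Sakamoto's theorem (Theorem \ref{OkaSakamoto}) gives $\pi_1(\CC^2 - \A_0) \cong \ZZ^{n + k_0}$, which is abelian and hence trivially has a conjugation-free presentation. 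The remaining lines of $\A$ are added one at a time, each passing through a single existing singular point, and the generalized Proposition \ref{lemAddLineComProof}(2) preserves the conjugation-free property at each step.

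Finally, since each $x_j$ is central and $x_1, \ldots, x_n$ are independent in the abelianization $H_1(\CC^2 - \A) \cong \ZZ^{n+k}$, the subgroup $\langle x_1, \ldots, x_n \rangle$ is a central free abelian direct factor of rank $n$. Setting all $x_j = 1$ in the conjugation-free presentation collapses each cyclic relation of length $m$ at a multiple point on $C_j$ into a cyclic relation of length $m-1$ among the incident line generators, which is precisely the relation at the corresponding point of $\A_{\mathrm{lines}}$, whether that point remains a multiple point or drops to a node; relations not involving any $x_j$ are unchanged. The quotient therefore has the conjugation-free geometric presentation of $\pi_1(\CC^2 - \A_{\mathrm{lines}})$, and a cycle in $G(\A_{\mathrm{lines}})$ would lift to a cycle in $G(\A)$ (the transformation between the two graphs being a sequence of contractions of forest vertices), so $\beta(\A_{\mathrm{lines}}) = 0$ and Fan's theorem identifies this quotient as a direct sum of free groups and a free abelian group. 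The main obstacle is the careful re-verification of the arguments in Proposition \ref{lemAddLineComProof}(2) in the multi-conic setting, especially the analysis of how skeletons and Lefschetz isomorphisms interact when the added line may intersect several conics each contributing a complex pair of points to the fiber; while the extended commutation makes the argument plausible, the required bookkeeping is delicate and deserves a detailed treatment.
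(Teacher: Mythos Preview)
The statement you are trying to prove is explicitly labeled as a \emph{conjecture} in the paper; the authors do not give a proof, and indeed the entire machinery of Section~\ref{secRealCLArr} (Lemma~\ref{g3_op_CFpreserve_CL}, Steps~\ref{step1CL}--\ref{step4CL}) is developed only for arrangements with a single conic. Your proposal is therefore not to be compared against an existing proof but rather assessed as a plausible strategy for attacking an open problem.

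As a strategy, your outline follows the natural extension of the paper's single-conic argument, and you correctly identify the main obstacle yourself in the final sentence: the generalization of Proposition~\ref{lemAddLineComProof}(2) to several conics is not a matter of bookkeeping alone. The difficulty is genuine. In Step~\ref{step4CL} the paper handles the case where \emph{two} components (the conic and one line) pass through both $p$ and $q_i$, and the resolution uses that the conic generator commutes with all line generators. With several conics present, the isomorphism $f_3'$ may affect three or more generators simultaneously (two conics and a line through both $p$ and $q_i$), and your extended commutation relations $[x_j,x_{j'}]=e$ and $[x_j,y_i]=e$ are available only \emph{after} conjugation-freeness has been established, whereas the inductive step needs them \emph{during} the proof. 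This circularity is exactly what prevents the paper from proving the conjecture, and your proposal does not break it: your ``first key step'' derives the commutations from a conjugation-free presentation, but that presentation is what you are trying to construct. A genuine proof would need either an independent argument for the commutation relations (not relying on conjugation-freeness) or a different inductive scheme that avoids the simultaneous interaction of multiple conics at a single step.
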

}
\end{remark}

We finish this section with the following conjecture for a general smooth plane curve:

\begin{conjecture}
Let $C$ be a smooth plane curve, $\LL$ a real line arrangement, such that for each line $L \in \LL$, $L$ intersects $C$ transversally in a real point.
Define the graph $G(\LL \cup C)$ as in the case of CL arrangements (see Definition \ref{defGraph}). If $\beta(\LL \cup C)=0$, then  $\pi_1(\CC^2 - (\LL \cup C))$  is a direct sum of a free abelian group and free groups.
\end{conjecture}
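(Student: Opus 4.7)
The plan is to adapt the proof of Theorem \ref{main_result}, replacing the conic by a general smooth plane curve $C$ of degree $d$. Recall that for a smooth plane curve Zariski's theorem gives $\pi_1(\CC^2 - C) \cong \ZZ$: the curve contributes $d$ meridians $x_1, \dots, x_d$ in a generic fiber, and the branch points of a generic projection (there are up to $d(d-1)$ of them) impose relations identifying all $x_i$ with a single generator $x$. This is the higher-degree analogue of the two conic meridians and the relation $x_1 = x_2$ coming from branch points in Section \ref{secRealCLArr}.

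First I would establish a conjugation-free preservation result in the spirit of Proposition \ref{lemAddLineComProof}(2): if $\LL$ is a real line arrangement, each line of which meets $C$ transversally at real points, and if $\pi_1(\CC^2 - (\LL \cup C), u)$ admits a conjugation-free geometric presentation for every real basepoint $u$, then adding a real line $L \not\in \LL$ passing through at most one multiple point of $\LL \cup C$ and keeping $\beta = 0$ yields a fundamental group with a conjugation-free geometric presentation as well. The proof would follow the template of Section \ref{secRealCLArr}: decompose any isotopy of $L$ into the elementary operations $h_1, h_2, h_3, h_4$. The rotations $h_1, h_2$ only affect line generators and their CF-preservation proofs (Lemmata \ref{g1_op_CFpreserve_CL} and \ref{g2_op_CFpreserve}) transfer verbatim; the complex rotation $h_4$ is purely local near a single tangency between $L$ and $C$, so Lemma \ref{g5_op_CFpreserve_CL} applies unchanged; the basepoint-change $h_3$ requires the most work, as one must adapt Steps \ref{step1CL}--\ref{step4CL} to accommodate a fiber containing up to $d$ points of $C$ rather than just two.

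Second I would generalise Step \ref{prop_commute}: given a CF presentation and $\beta(\LL \cup C) = 0$, every meridian $x_i$ of $C$ commutes with every line meridian $\Gamma_j$. Starting from a leaf of the forest $G(\LL \cup C)$, commutativity is first obtained for the $C$-meridian visible at that leaf; branch-point relations $x_i = x_{i+1}$ then transfer commutativity between adjacent $C$-meridians, and iteration over all branch points identifies $x_1 = \cdots = x_d \doteq x$ while yielding $[x, \Gamma_j] = e$ for every line meridian $\Gamma_j$. Combined with the inductive application of the first step along the forest $G(\LL \cup C)$ (the analogue of Proposition \ref{prsBetaZero}), this gives
\[
\pi_1(\CC^2 - (\LL \cup C)) \cong \langle x \rangle \oplus \pi_1(\CC^2 - \LL) \cong \ZZ \oplus \pi_1(\CC^2 - \LL).
\]
A short combinatorial check shows that $\beta(G(\LL \cup C)) = 0$ implies $\beta(G(\LL)) = 0$, so Fan's Proposition \ref{Fan} decomposes $\pi_1(\CC^2 - \LL)$ as a direct sum of a free abelian group and free groups, completing the proof.

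The main obstacle will be the generalisation of Step \ref{step4CL}: CF preservation under $h_3$ when crossing a multiple point whose relation involves several meridians of $C$. In the conic case the argument exploits that at most two $C$-meridians appear in any single multiple-point relation, and the ``second'' one can be eliminated using $x_1 = x_2$ together with $[x_2, \Gamma_j] = e$ supplied by the already-established Step \ref{prop_commute}. For a smooth degree-$d$ curve, up to $d$ of the generators in one relation may be $C$-meridians, each transformed by $f_3'$ (see equation \eqref{eqnf3'}) into a long conjugated expression. One must iterate the conic argument and verify that every conjugating subword lies in the subgroup already known to commute with the relevant line meridian $\Gamma_1$. Arranging the inductive bookkeeping so that the necessary commutativities are established \emph{before} they are invoked --- and thus avoiding a circular dependency between the CF-preservation step and the analogue of Step \ref{prop_commute} --- is the delicate technical point on which a complete proof will hinge.
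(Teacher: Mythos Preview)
The statement you are attempting to prove is presented in the paper as a \emph{conjecture}, not a theorem: the paper offers no proof of it. There is therefore nothing in the paper to compare your argument against. Your proposal is not a proof but a plan, and you say so explicitly when you identify ``the main obstacle'' at the end and leave it unresolved.

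Your plan is the natural one: mimic the conic case by replacing the two conic meridians by $d$ meridians of the smooth curve, re-run the elementary operations $h_1$--$h_4$, and derive $[x,\Gamma_j]=e$ from the forest structure as in Step~\ref{prop_commute}. This is exactly the extrapolation the paper's methods suggest, and your diagnosis of the difficulty is accurate: the proof of Step~\ref{step4CL} uses in an essential way that at most two components of the arrangement (one line and one branch of the conic) can pass through both $p$ and $q_i$, so that the isomorphism $f_3'$ perturbs at most two generators in the cyclic relation at $p$. For a degree-$d$ curve, several curve meridians can be involved simultaneously, and the bootstrap that feeds $[x_2,\Gamma_j]=e$ back into the simplification of $f_3'(x_1)$ no longer closes after one iteration. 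You have identified, but not removed, a genuine gap.

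Two further issues your plan does not address. First, the hypothesis ``$L$ intersects $C$ transversally in a real point'' does not force all $d$ intersection points of $L$ with $C$ to be real; for $d\geq 3$ a real line typically meets a real smooth curve in a mixture of real and complex-conjugate points, and this mixture changes as $L$ rotates. The local model of Lemma~\ref{g5_op_CFpreserve_CL} treats a single tangency between a line and a locally quadratic branch, but during a rotation of $L$ several such events occur and their interaction with the global numbering of the $d$ curve meridians must be tracked. Second, in Step~\ref{step2CL} the passage of the basepoint beneath a branch point of the conic is handled by a $90^\circ$ rotation of exactly two fiber points; for $d\geq 3$ the branch points are still simple, but there are $d(d-1)$ of them and the resulting permutations of the $d$ curve meridians interact nontrivially with the line meridians between them. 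None of this is obviously fatal, but each requires real work beyond what the conic case provides, which is presumably why the authors left the statement as a conjecture.
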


\subsection{The case of a graph with one cycle, where the conic does not pass through all the vertices of the cycle} \label{secOneCycleNotPass}
In the following section, we want to examine the case where $\beta(\A)~=~1$ (for a CL arrangement with one conic), where the conic does not pass through all the multiple points corresponding to the vertices of the cycle (the case where the conic does pass through all the the multiple points corresponding to the vertices of the cycle is studied in \cite{FG2}).
 We prove that not only that the affine fundamental group of the complement has a conjugation-free geometric presentation, but that the generator of the conic commutes with all the other generators.

\begin{prs} \label{prsCF}
Let $\A$ be a real CL arrangement with one conic $C$ such that:
 \begin{enumerate}
\item $\beta(\A) = 1$.
\item  There is a vertex $y \in G(\A)$ such that $y$ is a vertex contained in the cycle of $G(\A)$, $y \not\in C$ and the two different edges exiting from $y$, which compose the cycle, are associated to two different lines in $\A$ (see Figure \ref{examOneLine}), and both of them intersect the conic also in a node.
\end{enumerate}
Then, $\pi_1 (\CC ^2 - \mathcal A)$ has a conjugation-free geometric presentation.

\end{prs}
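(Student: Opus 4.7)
The plan is to break the unique cycle of $G(\A)$ and reduce to the forest case already handled by Proposition~\ref{prsBetaZero}. Let $L_1$ and $L_2$ be the two distinct lines through $y$ carrying the two cycle-edges at $y$, and set $\A' := \A \setminus \{L_1\}$. Since $L_1 \ne L_2$, removing $L_1$ destroys the unique cycle of $G(\A)$, so $\beta(\A')=0$. Proposition~\ref{prsBetaZero} then gives that $\pi_1(\C^2-\A')$ has a conjugation-free geometric presentation for every real basepoint, and Step~\ref{prop_commute} further gives that the generator $x$ of $C$ commutes with every line generator of $\pi_1(\C^2-\A')$.

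Next I would add $L_1$ back, mimicking the strategy of Lemma~\ref{g3_op_CFpreserve_CL}. Use $h_1,h_2$ to rotate $L_1$ so that it is almost vertical to the reference line through $y$, and take a first basepoint $u$ just to the right of all real singular points on $L_1$, as in Step~\ref{step1CL}. The hypothesis that $L_1$ meets $C$ transversally in two nodes is crucial here: for this basepoint the two points of $L_1 \cap C$ are the closest singularities to $u$, so they induce the relation $[g,x]=e$ without conjugations, where $g$ denotes the geometric generator of $L_1$. Combined with the commutativity inherited from $\A'$, this means that $x$ is central with respect to every line generator in $\pi_1(\C^2-\A)$ as well. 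The analysis of the skeletons of singularities not on $L_1$ would then proceed as in Step~\ref{step1CL}: the braid induced by the path below the two points of $L_1 \cap C$ (whether real or complex) reduces, after the standard $90^\circ$ rotation of the complex fiber, to a single index shift on the skeletons already computed in $\A'$, so the simplification procedure of Part~(II) of Step~\ref{lemConjFreeLine} applies.

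The main obstacle, and the genuine difference from Proposition~\ref{lemAddLineComProof}(2), is that $L_1$ passes through more than one multiple point of $\A'$---namely $y$ itself together with any other vertex of the former cycle that lies on $L_1$. At each such point the multiplicity is raised by one when $L_1$ is re-introduced, and the resulting cyclic relation must be checked to be conjugation-free. I would handle this by the same inductive argument used in Step~\ref{lemConjFreeLine4} and Step~\ref{step4CL}: move the basepoint one singular point at a time along $\ell$ and track the isomorphism~(\ref{eqnf3'}). The essential new ingredient, beyond the line-arrangement case, is the global commutativity of $x$ with every line generator, which absorbs any conjugation by $x$ arising from the Hurwitz action at a revisited multiple point; the remaining conjugations are of the purely line-arrangement type already controlled in the proof of Proposition~\ref{lemAddLineComProof}(1). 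Basepoint-independence of the resulting conjugation-free presentation then follows from the arguments of Steps~\ref{step3CL}--\ref{step4CL}.
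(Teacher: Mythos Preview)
Your reduction removes the wrong line. By deleting a cycle line $L_1$, you create an arrangement $\A'$ with $\beta(\A')=0$, but the line you must then reinsert passes through \emph{several} intersection points of $\A'$: the vertex $y$ (still an intersection of $L_2,\dots,L_k$) and at least the adjacent cycle vertex $y'$, possibly more if $L_1$ carries further graph edges. Every piece of machinery you invoke---Proposition~\ref{lemAddLineComProof}, Lemma~\ref{g3_op_CFpreserve_CL}, Steps~\ref{lemConjFreeLine}, \ref{lemConjFreeLine4}, \ref{step4CL}---is built on the hypothesis that the added line meets the existing arrangement in exactly \emph{one} pre-existing intersection point; the skeleton comparison in Part~(I) of Step~\ref{lemConjFreeLine} and the single distinguished relation $\tilde R_p$ in Part~(II) both collapse when there are two or more multiple points on $L$. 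Saying that the residual conjugations are ``of the purely line-arrangement type already controlled'' does not help, since Proposition~\ref{lemAddLineComProof}(1) itself has the same single-point restriction.

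The paper sidesteps this entirely by removing the \emph{non-cycle} lines $L_3,\dots,L_k$ through $y$ instead. Each such $L_i$ meets $\A$ in a unique multiple point (namely $y$; otherwise $\beta(\A)>1$), so after deletion $y$ drops to a node, the cycle breaks, and $\beta(\A')=0$. Reinserting $L_3,\dots,L_k$ one at a time then fits the one-point framework of Lemma~\ref{g3_op_CFpreserve_CL} verbatim; the only adjustment needed is to Step~\ref{prop_commute}, where the hypothesis that $L_1,L_2$ hit $C$ in nodes lets $y$ play the role of the starting ``leaf'' in the tree-traversal argument, even though the graph now has a cycle. That is the actual use of Condition~(2), not the commutation-absorption you sketch.
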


\begin{proof}

Let $y \in \A$ be an intersection point satisfying Condition (2). There are $k$ lines $L_1,\dots,L_k$ which pass through $y$; assume that $L_1,L_2$ are the lines which correspond to  edges of the cycle of $G(\A)$ (see Figure \ref{examOneLine}) and $L_3,\dots,L_k$ are not,  otherwise there would be more than one cycle in $G(\A)$. We can also assume that there are no other edges in the graph $G(\A)$ exiting from $y$
(indeed, as will be explained later, by the same methods of  Proposition \ref{prsBetaZero}, adding the corresponding lines will preserve the conjugation-free property).

\begin{figure}[!ht]
\epsfysize 4cm
\epsfbox{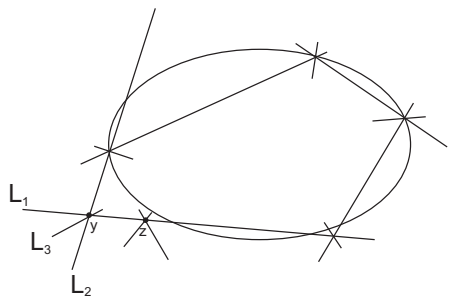}
\caption{The vertex $y$ is contained in the cycle of the graph, and satisfies Condition (2) and $L_1$ and $L_2$ are the lines correspond to  two edges exiting from $y$, intersecting the conic also in a node. The vertex $z$ is contained in the cycle but it does not satisfy Condition (2), since the edges in $G(\A)$ exiting from $z$ are associated to the same line $L_1$.}\label{examOneLine}
\end{figure}

 We look at $\A' = \A - \{L_3,\ldots,L_k\}$. Note that $\beta(\A') = 0$ (since $y$ is a node in $\A'$ and there is only one cycle in $\A$) and therefore $\pi_1 (\CC^2 -  \A')$ has a conjugation-free geometric presentation (by Proposition \ref{prsBetaZero}). Now, draw the line $L_3$ and rotate it till it is
almost vertical to the reference line $\ell$. As was already proven, adding a line in almost vertical position does not affect the conjugation-free property. Now, by the same course of the proof of Lemma \ref{g3_op_CFpreserve_CL}, we can still follow Steps \ref{step1CL}, \ref{step2CL} and \ref{step3CL}. The problem is Step \ref{prop_commute} (proving that all the generators corresponding to the line commute with the generator $x$ corresponding to the conic), as it relies on the fact that $\beta(\A)=0$. However, the proof of Step \ref{prop_commute} starts with choosing one leaf of the graph $G(\A)$ whose corresponding singular point is located on the conic $C$, and proving that all the generators, corresponding to the lines passing through that leaf, commute with $x$ (see Remark \ref{remPropComm}).

But if we choose  the vertex corresponding to $y$ as that ``leaf" here, we see that the same phenomenon happens here; indeed, $L_1$ and $L_2$ intersect $C$  also in a node, and therefore the corresponding generators commute with $x$. $L_3$ intersects $C$ in two nodes, and thus the corresponding generators commute with $x$. This means that for the ancestors of the vertex corresponding to $y$, there is at most only one generator (corresponding to the lines passing through that vertex) that may not commute with $x$ and now we can continue using the same methods of Step \ref{prop_commute} till we cover all the vertices of the cycle of $G(\A)$. Thus  $\pi_1 (\CC^2 -  (\A' \cup L_3))$ has a conjugation-free presentation.  In the same way, we can add the lines $L_4,\ldots,L_k$ inductively and get that
$\pi_1 (\CC^2 - \mathcal A)$ has a conjugation-free presentation.\end{proof}
 The above proof implies that the conic contributes only one generator
to the fundamental group, denoted by $x$ and it commutes with any other generator, corresponding to a line in the arrangement, i.e.:

\begin{prs} \label{lemmaPlusZ}
Let $\A$ be a real CL arrangement with one conic $C$. If $\A$ satisfies the conditions of Proposition \ref{prsCF}, then:
$$\pi_1 (\CC ^2 - \mathcal A) \cong \langle  x \rangle \oplus \pi_1 (\CC ^2 - (\A - C)),$$
where $x$ is the generator associated to the conic.
\end{prs}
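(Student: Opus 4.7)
My plan is to deduce Proposition \ref{lemmaPlusZ} as an essentially formal consequence of Proposition \ref{prsCF} together with the centrality of $x$ that was already established in the course of its proof. The key observation is that once we know $\pi_1(\C^2 - \A)$ admits a conjugation-free geometric presentation and that $x$ commutes with every generator coming from a line, the standard Zariski--van Kampen relations split cleanly into a trivial part involving $x$ and a conjugation-free presentation of $\pi_1(\C^2 - (\A-C))$.

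First, I would invoke Proposition \ref{prsCF} to obtain a conjugation-free geometric presentation of $\pi_1(\C^2 - \A)$. The two branch points of $C$ then contribute relations of the form $x_1 = x_2$, so the conic gives rise to a single generator $x$. Next, I would record the commutation $[x,\Gamma_i]=e$ for every line generator $\Gamma_i$. This is precisely the content of the final paragraph of the proof of Proposition \ref{prsCF}, obtained by adapting Step \ref{prop_commute} and using Remark \ref{remPropComm}: at the vertex $y$, both lines of the cycle meet $C$ transversally at nodes, and any additional line through $y$ (which, by the assumption on $G(\A)$, meets no other multiple point) meets $C$ in two nodes; hence all generators of lines through $y$ commute with $x$. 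Propagating along the forest obtained by ``opening" the cycle at $y$ then yields centrality of $x$ throughout.

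With $x$ central, I would analyze the Zariski--van Kampen relations case by case. A node of $C$ with a line $L_j$ contributes $[x,\Gamma_j]=e$, already subsumed. A branch point contributes $x_1=x_2$, trivial after identification. A multiple point on $C$ where $C$ meets lines $L_{i_1},\dots,L_{i_{k-1}}$ contributes the cyclic relation $[x,\Gamma_{i_1},\dots,\Gamma_{i_{k-1}}]=e$; because $x$ commutes with each $\Gamma_{i_j}$, factoring $x$ out of every cyclic word shows this relation is equivalent (modulo the commutators $[x,\Gamma_{i_j}]=e$) to the line-only cyclic relation $[\Gamma_{i_1},\dots,\Gamma_{i_{k-1}}]=e$. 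Multiple points involving no component of $C$ are unchanged. After this rewriting, the relations among the $\Gamma_i$ are exactly those produced by applying the Zariski--van Kampen theorem to $\A-C$, so they present $\pi_1(\C^2-(\A-C))$.

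Finally, the presentation of $\pi_1(\C^2-\A)$ decomposes as the generator $x$, the commutators $[x,\Gamma_i]=e$, and a complete set of defining relations for $\pi_1(\C^2-(\A-C))$. This is exactly the presentation of $\langle x\rangle \oplus \pi_1(\C^2-(\A-C))$, giving the claimed isomorphism. The only delicate step is the verification that $[x,\Gamma_i]=e$ for \emph{every} line generator, not just those through $y$; but this is already handled by the induction through the (now acyclic) graph, exactly as in Step \ref{prop_commute}, because once $y$ is treated as a pseudo-leaf the remaining argument is tree-shaped. Everything else is bookkeeping in the presentation.
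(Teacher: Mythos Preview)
Your proposal is correct and follows essentially the same approach as the paper. In fact, the paper's ``proof'' of this proposition is just the single sentence preceding it---namely, that the proof of Proposition~\ref{prsCF} already shows the conic contributes a single generator $x$ which commutes with every line generator---and then takes the direct sum decomposition as immediate (as was already done in the proof of Theorem~\ref{main_result}). You have simply spelled out the presentation-level splitting that the paper leaves implicit.

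One small imprecision worth flagging: you assert that, after factoring out $x$, the remaining relations among the $\Gamma_i$ ``are exactly those produced by applying the Zariski--van Kampen theorem to $\A-C$.'' Literally speaking, this need not be true at the level of skeletons, since removing $C$ changes the Lefschetz diffeomorphisms (no branch points, no nodes on $C$), so the raw van Kampen relations for $\A-C$ could look different and carry their own conjugations. The clean way to close this gap is topological rather than combinatorial: the inclusion $\C^2-\A \hookrightarrow \C^2-(\A-C)$ induces a surjection on $\pi_1$ whose kernel is normally generated by a meridian of $C$; since $x$ is central, the normal closure equals $\langle x\rangle$, so $\pi_1(\C^2-(\A-C))\cong \pi_1(\C^2-\A)/\langle x\rangle$. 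Combined with the fact that $x$ has infinite order (it survives in $H_1$), your presentation-level splitting then gives the direct sum. The paper does not address this point either, so your level of detail already exceeds theirs.
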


\begin{remark}\label{rem_CF_CLarr}
{\rm
(1) Note that if $\A$ is a CL arrangement satisfying the conditions of Proposition \ref{prsCF} and $G(\A - C)$ has no  cycles, then we know that $\pi_1 (\CC^2 - \mathcal (\A-C))$ is a direct sum of free groups and a free abelian group (see \cite{Fa1, Gar}), and thus, by Proposition \ref{lemmaPlusZ}, $\pi_1 (\CC^2 - \mathcal A)$ is also  a direct sum of free groups and a free abelian group.

(2) The class of CL arrangements with a conjugation-free geometric presentation is larger than stated in Proposition \ref{prsCF}. Indeed, it is easy to see that if the graph of a CL arrangement with one conic is a disjoint union of cycles, then the arrangement has a conjugation-free geometric presentation as well.

See also Section \ref{secCFGraphs}, where we define the notion of a {\it conjugation-free graph}, and  show that if the associated graph of the arrangement is a conjugation-free graph, then the fundamental group of the arrangement has a conjugation-free geometric presentation.}
\end{remark}

\begin{example} Based on Remark \ref{rem_CF_CLarr}(1), the affine fundamental group of the complement of the CL arrangement in Figure \ref{graph_com2} is isomorphic to $\Z^2 \oplus \mathbb{F}_2 \oplus \mathbb{F}_2$.

\begin{figure}[!h]
\epsfysize 3cm
\epsfbox{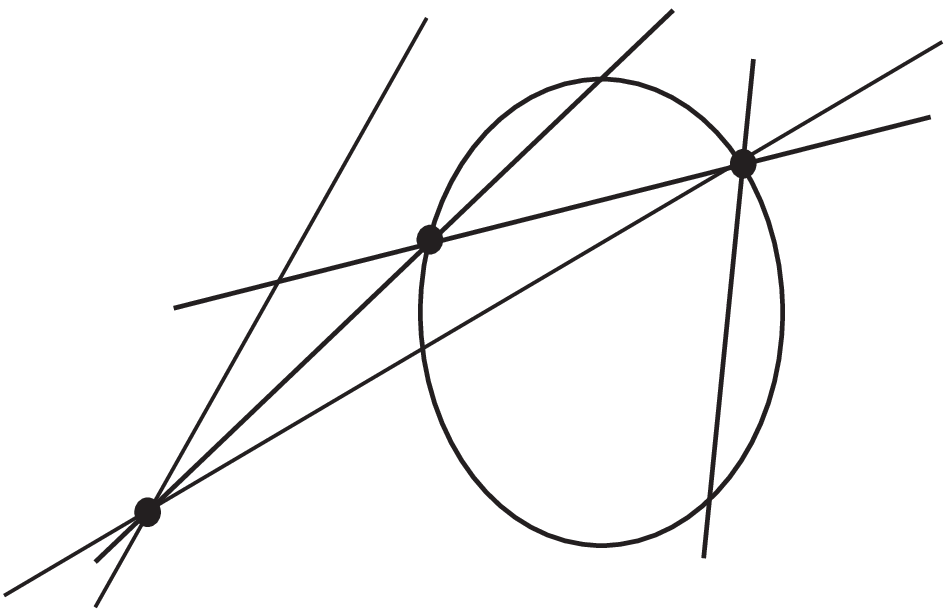}
\caption{}\label{graph_com2}
\end{figure}

\end{example}

\section{Conjugation-free graphs} \label{secCFGraphs}

Following Remark \ref{rem_CF_CLarr}(2) and the methods presented in the previous sections, this section examines a subclass of graphs, associated to a real line arrangement or to a real CL arrangement, which implies that the fundamental group of the arrangement has a conjugation-free geometric presentation.

\subsection[A conjugation-free graph for real line arrangements]{A conjugation-free graph for real line arrangements}\label{subsecCFGline}

We start with the  case of real line arrangements; Recall Proposition \ref{lemAddLineComProof}(1):

\begin{prs} \label{lemmaCF_app}
Let $\LL$ be a real line arrangement such that  $\pi_1(\CC^2 - \LL,u)$ has a conjugation-free geometric presentation for any real basepoint $u \in \ell-N$, where $\ell$ is the reference line. Let $L$ be a real line not in $\LL$ that passes through at most one intersection point of $\LL$. Then  $\pi_1(\CC^2 - (\LL \cup L),u)$ has a conjugation-free geometric presentation for any real basepoint $u$.
\end{prs}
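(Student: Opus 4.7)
The plan is essentially to invoke the machinery already developed in Section \ref{secRealLineArr}: Proposition \ref{lemmaCF_app} is a verbatim restatement of Proposition \ref{lemAddLineComProof}(1), whose full proof was assembled from Lemmas \ref{g1_op_CFpreserve}, \ref{g2_op_CFpreserve}, \ref{g3_op_CFpreserve} and Steps \ref{lemConjFreeLine}--\ref{lemConjFreeLine5}. So the proof is simply: ``this is Proposition \ref{lemAddLineComProof}(1)''. Let me, however, sketch the strategy one more time from the present vantage point, to make the reduction explicit.

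First, I would reduce the arbitrary placement of $L$ to a canonical one via the elementary moves. If $L$ misses every multiple point of $\LL$, then $L$ meets $\LL$ only transversally and Oka--Sakamoto gives $\pi_1(\CC^2-(\LL\cup L))\cong \ZZ\oplus \pi_1(\CC^2-\LL)$ with commuting extra generator, which is manifestly conjugation-free. Otherwise $L$ passes through exactly one multiple point $p$ of $\LL$. Using operation $h_1$ (Lemma \ref{g1_op_CFpreserve}), I rotate $L$ around $p$ until it is almost vertical with very negative slope, and using $h_2$ (Lemma \ref{g2_op_CFpreserve}) I make sure no other line of $\LL$ interferes with this positioning. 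In this canonical model, all intersection points on $L$ lie in a thin vertical strip, with $p$ the unique non-nodal point on $L$.

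Next, I would fix a basepoint $u \in \ell - N$ close to either end of this strip and verify the conjugation-free presentation there. Concretely, with $u = x(p') - \varepsilon$ (where $p'$ is the topmost intersection on $L$), the skeletons attached to singular points of $\LL$ outside the strip agree with their $\LL'$-counterparts up to a uniform index shift, and the nodes on $L$ contribute only trivial commutators. This is the content of Steps \ref{lemConjFreeLine} and \ref{lemConjFreeLine2}: the same simplification process that yields a conjugation-free presentation of $\pi_1(\CC^2-\LL',u)$ now yields one for $\pi_1(\CC^2-\LL,u)$, modulo the bookkeeping of the extra generator $\G_1$.

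Finally, to upgrade from these two particular basepoints to \emph{every} real basepoint $u\in \ell - N$, I would invoke operation $h_3$ (Lemma \ref{g3_op_CFpreserve}) and propagate the conjugation-free property through the three domains $D_1, D_2, D_3$ of $\ell-N$ carved out by $x(p')$ and $x(p_0)$. The main obstacle, and the technical core, will be Steps \ref{lemConjFreeLine4} and \ref{lemConjFreeLine5}: when the basepoint crosses under a multiple point $q_i$ of $\LL'$, the Hurwitz-type isomorphism $f_3'$ can in principle introduce conjugations into the $m$-th order relation induced by $p \in L$. The induction tracks precisely this: using the commutators $[\G_1,\G_x]=e$ already established by the previous inductive step at the nodes of $L$, one shows that any conjugating factor $\G_b'\cdots\G_{b-t+1}'$ appearing in $f_3'(\G_{a+t})$ commutes with $\G_1'$, so the relation at $p$ remains free of conjugations after the basepoint move. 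With this last step in place the proof is complete.
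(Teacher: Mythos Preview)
Your proposal is correct and matches the paper exactly: Proposition~\ref{lemmaCF_app} is stated in the paper merely as a recall of Proposition~\ref{lemAddLineComProof}(1), with no new proof given, and your sketch faithfully reproduces the structure of that earlier proof (Lemmas~\ref{g1_op_CFpreserve}--\ref{g3_op_CFpreserve} together with Steps~\ref{lemConjFreeLine}--\ref{lemConjFreeLine5}). Nothing further is needed.
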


We can use this proposition to reprove the main result of \cite[Proposition 1.4]{EGT1} in an inductive way:
\begin{prs} \label{prsOnlyCycle}
Let $\LL$ be a real line arrangement.
If $G(\LL)$ is a cycle, then $\pi_1(\C^2 - \LL)$ has a conjugation-free geometric presentation.
\end{prs}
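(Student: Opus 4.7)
The plan is to reconstruct $\LL$ line by line, starting from the empty arrangement (whose complement has trivial, hence trivially conjugation-free, fundamental group) and invoking Proposition~\ref{lemmaCF_app} at every step to propagate the conjugation-free property. The task reduces to exhibiting an ordering of the lines of $\LL$ under which each newly added line passes through at most one existing intersection point of the current sub-arrangement.

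First I would partition the lines of $\LL$ into the \emph{edge lines} $L_1,\dots,L_k$ (those carrying at least two multiple points of $\LL$), the \emph{single-vertex lines} (those through exactly one multiple point), and the \emph{free lines} (those through none). Since $G(\LL)$ is a single cycle $C_n$, the multiple points lying on any $L_i$ form a consecutive ``run'' in the cycle, and the $k\geq 3$ runs partition the $n$ edges of $C_n$ cyclically (note that $k=2$ is ruled out, as two distinct lines meet in a single point, whereas two runs would produce two distinct transition vertices). Picking a starting vertex at which two distinct edge lines meet and walking once around the cycle labels the edge lines $L_1,\dots,L_k$ in cyclic order. I add them to the sub-arrangement in this order, then the single-vertex lines, and finally the free lines.

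The key claim is that each $L_i$ meets the current sub-arrangement $\{L_1,\dots,L_{i-1}\}$ in zero existing intersection points. A cycle vertex lying on $L_i$ is either a transition vertex of $L_i$'s run or an interior vertex of it; in every case at most one previously added edge line passes through such a vertex (namely $L_{i-1}$ at the start of $L_i$'s run, and, in the wrap-around case $i=k$, also $L_1$ at the end of $L_k$'s run), and since no single-vertex line has yet been added, each such vertex carries at most one line and is therefore not yet an intersection of the sub-arrangement. Any other point $L_i\cap L_j$ with $L_j$ not cyclically adjacent to $L_i$ would coincide with an existing intersection only if three distinct lines of $\LL$ concurred there, forcing that point to be a multiple point of $\LL$, hence a cycle vertex with $\geq 3$ edge lines through it --- impossible, since each vertex of $G(\LL)$ has degree $2$. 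For the later stages, a single-vertex line through $v$ meets at most one existing intersection (namely $v$, once it has acquired a second incident line), and a free line meets none. Iterating Proposition~\ref{lemmaCF_app} therefore yields a conjugation-free geometric presentation of $\pi_1(\C^2-\LL)$.

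The hard part will be the wrap-around step, in which the last edge line $L_k$ is added to a sub-arrangement already containing both of its cyclic neighbours $L_{k-1}$ and $L_1$; one might worry that each of the two transition vertices of $L_k$'s run is already an intersection point of the sub-arrangement. The argument above resolves this precisely by postponing the single-vertex lines to the next stage, so that the two transition vertices still carry only one edge line apiece when $L_k$ is inserted, and $L_k$ meets the sub-arrangement in zero existing intersections, as required.
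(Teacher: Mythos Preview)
Your proof is correct and follows essentially the same inductive strategy as the paper: first build the sub-arrangement of ``edge lines'' (those carrying at least two multiple points), then add the remaining lines one at a time through a single existing multiple point, invoking Proposition~\ref{lemmaCF_app} at each step. The only real difference is one of economy: the paper observes in one stroke that the edge-line sub-arrangement $\LL_0$ is nodal (three concurrent edge lines would force a vertex of degree $\geq 3$ in $G(\LL)$), so $\pi_1(\CC^2-\LL_0)$ is abelian and hence trivially conjugation-free, and then proceeds directly to stage two. You instead add the edge lines one by one in cyclic order and carefully argue that each meets zero existing intersections --- this recovers the nodality of $\LL_0$ as a byproduct, but the detailed analysis of runs, transition vertices, and the wrap-around step is unnecessary once one sees the shortcut. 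Your more uniform argument has the mild advantage of relying solely on Proposition~\ref{lemmaCF_app} throughout, and you also handle the free lines explicitly, which the paper's proof leaves implicit.
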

\begin{proof}
We build $\LL$ inductively: at each step, we prove that the fundamental group is conjugation-free.

(1) First, draw only the lines that correspond to the edges of the cycle $G(\LL)$: denote by $\LL_0$ the resulting arrangement.
Since $G(\LL)$ is a cycle, then $G(\LL_0)$ is the empty graph, as all the intersection points of $\LL_0$ are  nodes. Thus
$\pi_1(\C^2 - \LL_0)$ is abelian and it is conjugation-free for any basepoint.

(2) Denote by $x_1,\dots,x_s$ the nodes of $\LL_0$ such that their associated vertex appeared in the graph  $G(\LL)$  (i.e. these points correspond to the vertices of $G(\LL)$). Looking at the arrangement $\LL$,  there are $k_i$ lines passing through the point $x_i$: $L_{i_1},\dots, L_{i_{k_i}}$. Note that in $\LL_0$, for each $x_i$ either one or two lines
from the set $\{L_{i_j}\}_{j=1}^{k_i}$ were drawn (see Figure \ref{drawnLines}).

\begin{figure}[!ht]
\epsfysize 3cm
\epsfbox{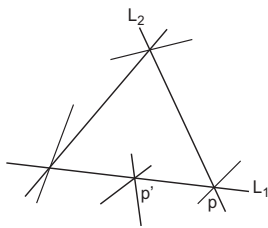}
\caption{For the point $p$, the lines $L_1$ and $L_2$ are drawn; for the point $p'$, only the line $L_1$ is drawn.}\label{drawnLines}
\end{figure}

Look at the point $x_1$ and assume that $L_{1_1}$ and $L_{1_2}$ (or $L_{1_1}$) were already drawn while building $\LL_0$.
Let $\LL_1 \doteq  \{L_{1_1}, L_{1_2}\}$ (or $\LL_1 \doteq \{L_{1_1}\}$ resp.). Then $\LL_1' \doteq \{L_{1_j}\}_{j=1}^{k_i}  - \LL_1$
is a set of lines such that for each $L \in \LL_1'$, $L \cap \{x_i\}_{i=1}^s = x_1$; otherwise (explicitly, if
$L \cap \{x_i\} = \{x_1,x_m\}$, $m \not\in \{2,s\}$) we would get that $\beta(\LL)>1$. Note that if $m=2$ or $m=s$, then $L$ would be already in $\LL_1$ (which is not possible, as $L \in \LL_1'$).

Therefore, each $L \in \LL_1'$ passes through a single intersection point of $\LL_0$ and we can inductively add all the lines
$L \in \LL_1'$ to $\LL_0$ and preserve the conjugation-free property, by Proposition \ref{lemmaCF_app}.

(3) We continue as in step (2) for the points $x_2,\dots,x_s$, adding the missing lines. At each step, the conjugation-free property is preserved.
\end{proof}

Recall the following result, stated in \cite[Corollary 2.5]{EGT2}:

\begin{prs} \label{prsCFless1}
Let $\LL$ be a real line arrangement satisfying $\beta(\LL) \leq 1$. Then, $\pi_1(\CC^2-\LL)$ has a conjugation-free geometric presentation.
\end{prs}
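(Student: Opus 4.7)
The plan is to split into the cases $\beta(\LL) = 0$ and $\beta(\LL) = 1$, and in each case to present $\LL$ as built up one line at a time from a simpler sub-arrangement whose fundamental group is already known to have a conjugation-free geometric presentation. The key tool at every incremental step will be Proposition~\ref{lemAddLineComProof}(1), which permits adding a real line that passes through at most one intersection point of the current arrangement while preserving the conjugation-free property.

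For $\beta(\LL) = 0$, the graph $G(\LL)$ is a forest. I would first take as starting arrangement $\LL^{(0)}$ the union of all lines of $\LL$ containing no vertex of $G(\LL)$; its only singular points are nodes, so by \cite{OkSa} its fundamental group is abelian and hence trivially conjugation-free. Then I would process the connected components of $G(\LL)$ one at a time, within each tree using a breadth-first traversal from an arbitrary root, mimicking the strategy of Proposition~\ref{prsBetaZero} (stated there for the CL case but working verbatim for lines). When the traversal reaches a vertex $v$, the lines of $\LL$ through $v$ not yet drawn are added one at a time; at the moment of addition, $v$ itself is not yet a multiple point of the partial arrangement --- it becomes one only after enough lines through it have been added --- so each new line passes through at most one existing multiple point, namely the parent of $v$ in the traversal (when adding the line that carries the edge from $v$ to its parent) or no existing multiple point otherwise. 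Proposition~\ref{lemAddLineComProof}(1) then propagates conjugation-freeness through every addition.

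For $\beta(\LL) = 1$, let $\mathcal{C}$ be the unique simple cycle of $G(\LL)$ with vertex set $V_{\mathcal{C}}$, and let $\LL' \subseteq \LL$ be the sub-arrangement consisting of all lines of $\LL$ that pass through at least one vertex of $V_{\mathcal{C}}$. The key combinatorial observation is that if a multiple point $p$ of $\LL'$ were not in $V_{\mathcal{C}}$, then two lines of $\LL'$ through $p$ would have to hit distinct cycle vertices (the ``same cycle vertex'' case is ruled out because two points determine a line), producing a second independent cycle in $G(\LL)$ and contradicting $\beta(\LL) = 1$. Hence $G(\LL') = \mathcal{C}$, and Proposition~\ref{prsOnlyCycle} supplies a conjugation-free geometric presentation for $\pi_1(\CC^2 - \LL')$. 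The remaining lines $\LL \setminus \LL'$ do not meet any cycle vertex, and I would add them one at a time via a breadth-first traversal of the forest part of $G(\LL)$, starting at the cycle vertices where the forest is attached (or at arbitrary roots of disconnected trees), exactly as in the $\beta = 0$ case.

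The main obstacle will be the combinatorial verification that the traversal orderings really do enforce the hypothesis of Proposition~\ref{lemAddLineComProof}(1) at every step --- i.e.\ that each newly added line passes through at most one multiple point of the arrangement already built. For the $\beta = 0$ case this is immediate from the tree structure. For $\beta = 1$ it rests on the same kind of argument as the one used to pin down $G(\LL') = \mathcal{C}$: any line in $\LL \setminus \LL'$ passing through two already-processed non-cycle multiple points would contribute an independent cycle in $G(\LL)$ beyond $\mathcal{C}$, contradicting $\beta(\LL) = 1$. Once this combinatorial check is in hand, no new techniques beyond those established in Propositions~\ref{lemAddLineComProof}(1), \ref{prsBetaZero} and~\ref{prsOnlyCycle} are required.
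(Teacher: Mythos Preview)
Your proposal is correct and follows the same overall strategy as the paper. For $\beta(\LL)=1$, both you and the paper apply Proposition~\ref{prsOnlyCycle} to the sub-arrangement of lines through the cycle vertices and then attach the remaining (forest) lines one at a time via Proposition~\ref{lemAddLineComProof}(1). The one small divergence is in the $\beta(\LL)=0$ case: the paper simply invokes Fan's theorem that the fundamental group is a direct sum of free groups and a free abelian group and asserts that conjugation-freeness is then ``easy to see,'' whereas you give the explicit inductive construction (mirroring Proposition~\ref{prsBetaZero} in the line case). Your route is more self-contained here, since conjugation-freeness is a property of a particular geometric presentation rather than of the abstract isomorphism type, and it is in fact the argument the paper itself relies on elsewhere (see Remark~\ref{rem_NotRest}). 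One minor wording slip: when processing a non-root vertex $v$, the line carrying the edge to the parent of $v$ was already drawn when the parent was processed, so each new line added at $v$ passes only through $v$ itself (at most one intersection point of the current partial arrangement), not through the parent.
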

\begin{proof}
Indeed, this was already proved in \cite{EGT2}, and we review the proof shortly. For the case  $\beta(\LL) = 0$, this is the content of Fan's result \cite{Fa2}: in this case, the fundamental group is a direct sum of a free abelian group and free groups and it is easy to see that it is conjugation-free for any basepoint. When
$\beta(\LL) = 1$, we can continue the construction above: all one needs to do is to add the lines corresponding to the trees
whose roots lie on the unique cycle of $G(\LL)$.
\end{proof}

Note that it is shown in \cite{EGT2} that if the graph is a disjoint union of cycles, then the arrangement is also conjugation-free, based on Oka-Sakamoto's theorem (see Theorem \ref{OkaSakamoto}).

\medskip

The above construction motivates the following definition:
\begin{definition} \label{defCFG}  \emph{
Let $G$ be a planar connected graph and denote by deg$(v)$ the number of edges exiting from a vertex  $v \in G$. The graph $G$ is called a \emph{conjugation-free graph} (CFG) if:
\begin{enumerate}
\item $\beta(G) \leq 1$, or
\item Let $\{v_i\}_{i=1}^m$ be the set of vertices in $G$ satisfying deg$(v_i) \leq 2$. For each $v_i$, $1 \leq i \leq m$, denote by $V_i$ the subset of $G$, composed of the vertex $v_i$ and the edge(s) exiting from it. Let $X = X(G) \doteq \bigcup_{i=1}^m V_i$. Then $G$ is a conjugation-free graph if $G - X$ is.
\end{enumerate}
}
\end{definition}

The main result of this section is:

\begin{thm} \label{thmCFG}
Let $\LL$ be a real line arrangement. If $G(\LL)$ is a disjoint union of conjugation-free graphs, then $\pi_1(\CC^2-\LL)$ has a conjugation-free geometric presentation.
\end{thm}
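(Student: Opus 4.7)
My plan is to induct on the recursive structure of Definition \ref{defCFG} and to realize $\LL$ as the result of successively adjoining single lines through at most one multiple point, so that Proposition \ref{lemAddLineComProof}(1) applies at every step.

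The disjoint-union case reduces to the connected case by Oka--Sakamoto. Writing $G(\LL) = G_1 \sqcup \cdots \sqcup G_s$ with each $G_i$ a connected CFG, I would partition the lines as $\LL = \LL_1 \sqcup \cdots \sqcup \LL_s$, where $\LL_i$ collects the lines whose multiple intersection points are vertices of $G_i$ (lines passing through no multiple point may be attached to any $\LL_i$). Since no multiple point of $\LL$ involves lines from two distinct $\LL_i$'s, all intersections between $\LL_i$ and $\LL_j$ for $i \neq j$ are simple; after a small generic perturbation one may assume these intersections contribute the full transverse count $|\LL_i|\cdot|\LL_j|$ of distinct points required by Theorem \ref{OkaSakamoto}. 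Applying that theorem iteratively gives $\pi_1(\CC^2 - \LL) \cong \bigoplus_i \pi_1(\CC^2 - \LL_i)$, which is conjugation-free if each summand is, reducing to the connected case.

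For a connected CFG $G = G(\LL)$ I would induct on the \emph{stripping depth} $d(G)$, the minimum number of applications of $G \mapsto G - X(G)$ needed to reach a graph with $\beta \leq 1$. The base $d(G) = 0$ is precisely Proposition \ref{prsCFless1}. For the inductive step, set $X = X(G)$ and call a line through $v \in X$ \emph{non-connecting at $v$} if $v$ is the unique multiple point of $\LL$ lying on it. Since $\deg_G(v) \leq 2$ forces at most two \emph{connecting} lines at $v$, while $v$ has multiplicity at least three, every $v \in X$ admits at least one non-connecting line; let $\mathcal{M}$ be the union of all such lines across $v \in X$ and set $\LL' = \LL \setminus \mathcal{M}$. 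Each $L \in \mathcal{M}$ passes through exactly one multiple point of $\LL$, and because multiple points of any sub-arrangement of $\LL$ are also multiple points of $\LL$, re-adjoining the lines of $\mathcal{M}$ to $\LL'$ one at a time always meets the hypothesis of Proposition \ref{lemAddLineComProof}(1). Thus, once $\pi_1(\CC^2 - \LL')$ is shown to admit a conjugation-free geometric presentation, iterated application of Proposition \ref{lemAddLineComProof}(1) delivers the same for $\pi_1(\CC^2 - \LL)$.

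The main obstacle is the combinatorial claim that $G(\LL')$ is itself a disjoint union of CFGs with stripping depth strictly less than $d(G)$, so that the inductive hypothesis applies to it. Note that $G(\LL')$ need not equal $G - X$: a line supporting a chain of multiple points $q_1, v_{i_1}, \ldots, v_{i_r}, q_2$ in $\LL$, with $q_1, q_2 \notin X$ and all $v_{i_j} \in X$, contributes in $G(\LL')$ a single \emph{merged} edge $q_1 q_2$ that is absent from $G - X$. To complete the induction one must verify that every $v \in X$ indeed ceases to be a vertex (its multiplicity in $\LL'$ is $c(v) \leq 2$), that the degree of any surviving vertex in $G(\LL')$ does not exceed its degree in $G$ (so the merges never create new high-degree vertices), and that every cycle of $G(\LL')$ lifts to a cycle of $G$ meeting $X$---so the stripping process on $G(\LL')$ terminates in strictly fewer steps than on $G$. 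This combinatorial bookkeeping, together with the genericity needed to invoke Oka--Sakamoto in the disjoint-union reduction, is where the bulk of the technical work lies.
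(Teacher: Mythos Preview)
Your overall strategy coincides with the paper's: reduce to the connected case via Oka--Sakamoto, induct on the recursion in Definition~\ref{defCFG}, and rebuild $\LL$ line by line using Proposition~\ref{lemAddLineComProof}(1). The divergence is in \emph{which} lines you strip in the inductive step: you remove only the non-connecting lines at each $v\in X$, whereas the paper removes \emph{all} lines through the $X$-vertices and then reinserts first the connecting (edge) lines, then the non-connecting ones.

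This difference is not cosmetic; it creates a genuine gap. Your $G(\LL')$ is $G$ with each $X$-vertex \emph{suppressed}: a degree-$1$ vertex is pruned, while a degree-$2$ vertex whose two edges lie on the same line is replaced by a merged edge. Suppression is a homeomorphism of the underlying $1$-complex, so $\beta(G(\LL'))$ need not drop and $G(\LL')$ need not be a CFG. Concretely, realise the subdivided $K_4$ as follows: take four generic points $p_1,\dots,p_4$, the six lines $L_{ij}=\overline{p_ip_j}$, a point $m_{ij}$ on each $L_{ij}$ strictly between $p_i$ and $p_j$, and two further generic lines through each $m_{ij}$. Then $G(\LL)$ is the barycentric subdivision of $K_4$, a CFG of stripping depth~$1$ (all $m_{ij}$ have degree~$2$, and $G-X$ is four isolated vertices). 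But every $m_{ij}$ has both of its edges on the single line $L_{ij}$, so your $\mathcal{M}$ consists of the twelve generic lines, $\LL'$ is precisely the six Ceva lines, and $G(\LL')=K_4$ --- which by Example~\ref{exampCFG}(3) is \emph{not} a CFG (all vertices have degree~$3$, so $X(K_4)=\emptyset$). Your induction halts; the ``combinatorial bookkeeping'' you defer is not bookkeeping but a false claim. The paper avoids this by also deleting the connecting lines $L_{ij}$, so that the intermediate arrangement $\LL_X$ has no lines whatsoever through the former $X$-vertices and its graph genuinely sits inside $G-X$ rather than in a topological suppression of $G$.
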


Before proving the theorem, we give some examples of conjugation-free graphs.
\begin{example} \label{exampCFG}
\begin{enumerate}
\item[(1)] Obviously, a forest is a CFG.
\item[(2)] Graphs (a) and (b) in Figure \ref{graphsCF} are CFGs.
\item[(3)] The {\em Ceva arrangement} (also called the {\em braid arrangement}) has a non conjugation-free graph (see Figure \ref{graphsCF}(c)).
\begin{figure}[!ht]
\epsfysize 3cm
\epsfbox{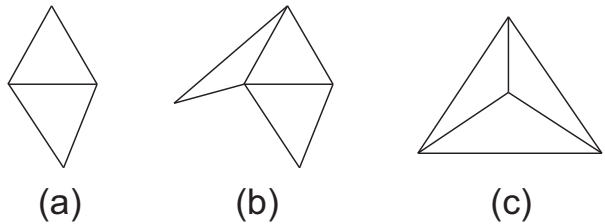}
\caption{Graphs (a) and (b) are conjugation-free graphs. Graph (c) is not a conjugation-free graph.}\label{graphsCF}
\end{figure}
\end{enumerate}
\end{example}

\begin{remark}
Using the package {\it TESTISOM} (see \cite{HR}), one can show that there exist arrangements having a conjugation-free presentation for their fundamental group, though their associated graph is not a conjugation-free graph, e.g. the graph presented in Figure \ref{ceva5}.

\begin{figure}[!ht]
\epsfysize 2cm
\epsfbox{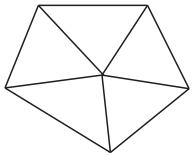}
\caption{A graph which is not a conjugation-free graph, though the fundamental group of its associated line arrangement has a conjugation-free geometric presentation.}\label{ceva5}
\end{figure}

\end{remark}

\begin{proof}[Proof of Theorem \ref{thmCFG}]
We start with the case where $\LL$ is a real line arrangement whose graph $G(\LL)$ is a \emph{connected} conjugation-free graph.
If $\beta(\LL) \leq 1$, then the theorem holds due to Proposition \ref{prsCFless1}.
Assume now that $\beta(\LL) > 1$. Denote $G = G(\LL)$ and $X = X(G)$ (the set of edges and vertices defined in Definition \ref{defCFG}). We split our treatment into two cases.

\medskip

\noindent
{\bf Case (1):}  Assume that the graph $G - X$ has at most one cycle. Let $V = \{v_1, \dots, v_s\} \subset X$ be the set of vertices (of $G$) which were removed. We now look at the  sub-arrangement $\LL_X$ of $\LL$, which is the line arrangement associated to the graph $G - X$; i.e.
$$\LL_X = \LL - \{\text{lines that pass through }v_i,\, 1 \leq i \leq s\}.$$
As $\beta(\LL_X) \leq 1$, the fundamental group of the complement of $\LL_X$ is conjugation-free for any basepoint. Now, for each $v_i \in V \subset X$, there are either one or two lines in $\LL$,
that pass through $v_i$, which correspond to the removed edges in the graph $G$. Now, add these lines to the arrangement $\LL_X$ and call the new arrangement $\LL_X'$. Since these lines pass through at most one multiple point, then $\LL_X'$ is also conjugation-free, by Proposition \ref{lemmaCF_app}.

Note that the every line in $\LL - \LL_X'$ does not correspond to an edge in the graph. Therefore it passes through only one
multiple point of $\LL - \LL_X'$, and thus also $\LL$ is conjugation free.
%
%Note that if $p \in {\rm Sing}(\LL)$ such that there exists a corresponding vertex in $G$, then $p \in {\rm Sing}(\LL_X')$. This means that every ``missing'' line, i.e. every line in $\LL - \LL_X'$ passes through only one point in  $V$, since otherwise it would pass through (at least) two multiple points (e.g. $v_1,v_2$) and w.l.o.g., deg$(v_1)>2$, as already before the drawing of this line, deg$(v_1)=2$. This means that $\LL$ is conjugation-free too.

\medskip

\noindent
{\bf Case (2):}
Assume now that the graph $G - X$ has more than one cycle. Let $G_0 = G, X_0 = X, G_1 = G_0 - X_0, X_1 = X(G_1),\dots,G_n = G_{n-1} - X_{n-1}$. Since $G_0$ is a CFG, then there is $k \in \N$  such that the graph $G_k$ has at most one cycle. In this case, we proceed inductively: in the first step, starting from the graph $G_k$, we add to the arrangement corresponding to the graph $G_k$
(which is conjugation-free) the lines that were removed from it in the last step (i.e. the lines that pass through all the vertices with degree $2$ in the graph $G_{k-1}$). As was proved in case (1), we get an arrangement $\LL_{k-1}$ whose fundamental group has a conjugation-free geometric presentation and its graph is $G_{k-1}$. Then we add to $\LL_{k-1}$ its missing lines and we continue till we rebuild the whole arrangement $\LL$.

\medskip

The above two cases finalize the proof for the case of a real line arrangement $\LL$, whose graph $G(\LL)$ is a \emph{connected} conjugation-free graph. The general case of a real line arrangement $\LL$, whose graph $G(\LL)$ is a disjoint union of conjugation-free graphs, can be deduced by Oka-Sakamoto's theorem (see \cite{OkSa} and Theorem \ref{OkaSakamoto} above).
\end{proof}

 We finish this section by
 looking at another operation  $h_r$: rotation of the arrangement $\LL$, when the reference line $\ell$ (and the real basepoint $u$ on it)
remains fixed. We claim that when a real line arrangement  $\LL$ can be built
by adding  one line that passes through at most  one intersection point at each step of its construction (i.e. the graph of $\LL$ is a conjugation-free graph), then $h_r$ preserves the
conjugation-free property.

\begin{lemma} \label{rot_op_CFpreserve}
Let $\LL$ be a real line arrangement such that $G(\LL)$ is a conjugation-free graph.
 Then, on this class of real line arrangements,
the  operation $h_r$ preserves the conjugation-free property for $\LL$; that is, both fundamental groups $\pi_1(\CC^2 - \LL,u)$ and $\pi_1(\CC^2 - h_r(\LL),u)$ are conjugation-free.
\end{lemma}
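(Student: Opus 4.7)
The essential observation is that the associated graph is a combinatorial invariant preserved by any rotation of $\RR^2$. Indeed, $G(\LL)$ is defined entirely in terms of the multiple points of $\LL$ and the incidence of lines with these points; a rotation preserves both the set of multiple points and the lines passing through them, so $G(h_r(\LL)) = G(\LL)$. In particular, if $G(\LL)$ is a conjugation-free graph, so is $G(h_r(\LL))$, and the hypothesis of Theorem \ref{thmCFG} is satisfied by \emph{both} arrangements.

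With this observation in hand, the plan is simply to apply Theorem \ref{thmCFG} twice: once to $\LL$ itself and once to $h_r(\LL)$. Since $h_r$ is a rotation by a real angle, $h_r(\LL)$ is again a real line arrangement, and its associated graph is a conjugation-free graph by the above. Theorem \ref{thmCFG} then yields that both $\pi_1(\CC^2 - \LL, u)$ and $\pi_1(\CC^2 - h_r(\LL), u)$ admit conjugation-free geometric presentations for every real basepoint $u \in \ell - N$, which is precisely the conclusion of the lemma. Note that it is important that Theorem \ref{thmCFG} itself does not rely on any rotation operation in its proof (only on the operations $h_1, h_2, h_3$ through Propositions \ref{lemAddLineComProof}(1) and \ref{lemmaCF_app}), so there is no circularity in invoking it for $h_r(\LL)$.

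The only technical point requiring attention is that the rotated arrangement must remain compatible with the fixed projection model: no line of $h_r(\LL)$ should be vertical (i.e.\ pass through the projection point $O = (1:0:0)$), distinct singular points of $h_r(\LL)$ must project to distinct points on $\ell$, the basepoint $u$ must avoid $N(h_r(\LL))$ (the set of $x$-projections of singular points), and $\ell$ must still lie below all real singular points of $h_r(\LL)$. Each of these is a generic condition on the rotation angle, failing only on a discrete set of ``bad'' angles. For a rotation producing a non-generic configuration, one may replace it by an arbitrarily small perturbation of the angle; such a perturbation does not alter the intersection lattice of $h_r(\LL)$, hence does not alter $G(h_r(\LL))$, and therefore the conjugation-free conclusion drawn from Theorem \ref{thmCFG} is unaffected. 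I anticipate no substantive obstacle beyond this bookkeeping: the lemma reduces cleanly to Theorem \ref{thmCFG} combined with the rotation-invariance of the associated graph.
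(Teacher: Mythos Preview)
Your proposal is correct and follows essentially the same approach as the paper: both arguments exploit the fact that $G(h_r(\LL)) = G(\LL)$ is still a conjugation-free graph, so one can independently rebuild each of $\LL$ and $h_r(\LL)$ from a nodal sub-arrangement by adding one line at a time through at most one intersection point. The only cosmetic difference is that the paper rewrites this inductive construction inline (starting from $\LL_0$ and invoking Lemma~\ref{g3_op_CFpreserve} at each step), whereas you package the same reasoning by citing Theorem~\ref{thmCFG} directly for both arrangements; your observation that Theorem~\ref{thmCFG} nowhere uses $h_r$ correctly rules out circularity.
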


\begin{proof}
The proof is fairly trivial. We first draw the maximal number of lines of $\LL$ such that its graph is empty, i.e. the resulting
arrangement $\LL_0$ consists only of nodes. Obviously, $\pi_1(\CC^2 - \LL_0,u)$ is conjugation-free, as it is abelian, for every basepoint $u$.
It is also clear that for $h_r(\LL_0)$ the rotated arrangement, $\pi_1(\CC^2 - h_r(\LL_0),u)$ is conjugation-free for every basepoint $u$.
Now, for both arrangements, we can draw one line at each step of the construction, passing through at most  one intersection point.
By Lemma \ref{g3_op_CFpreserve}, at each step we still get that the two resulting arrangements have fundamental groups which are
conjugation-free, and thus, eventually after
drawing all the lines (in $\LL - \LL_0$ and in $h_r(\LL) - h_r(\LL_0)$), we get that both fundamental groups have the conjugation-free property.
\end{proof}

\begin{remark}
\emph{While other lemmata (such as Lemma \ref{g1_op_CFpreserve} and  Lemma \ref{g2_op_CFpreserve}) prove the preservation of the conjugation-free property
by finding the explicit isomorphism, the above lemma does not have to find it (even though the presentations of both fundamental groups are different), as we are using only the property that adding a line (that pass through at most  one intersection point) preserves the conjugation-free property.}
\end{remark}

\subsection[A conjugation-free graph for real CL arrangements]{A conjugation-free graph for real CL arrangements}
 In this section, we check which results from  Section \ref{subsecCFGline} can be adapted to the case of real CL arrangements.

 Note that if $\A$ is a real CL arrangement with only one conic, then the definition of a conjugation-free graph can be applied
 with some changes. Explicitly:

 \begin{definition} \label{defCFG}  \emph{
Let $G$ be a planar connected graph  and let $\A$ be a real CL arrangement with one conic $C$. The graph $G$ is called a \emph{conjugation-free graph} (CFG) associated to $\A$ if $G = G(\A)$ and:
\begin{enumerate}
\item $\beta(G) = 0$, or $\beta(G) = 1$ and $G$ satisfies  Condition (2) of Proposition~\ref{prsCF},\\ or
\item Let $\{v_i\}_{i=1}^m$ be the set of all vertices in $G$ satisfying deg$(v_i) \leq 2$ and the point corresponds to the vertex $v_i$ is not on $C$. For each $v_i$, $1 \leq i \leq m$, denote by $V_i$ the subset of $G$, composed of the vertex $v_i$ and the edge(s) exiting from it. Let $X = X(G) \doteq \bigcup_{i=1}^m V_i$. Then $G$ is a conjugation-free graph if $G - X$ is.
\end{enumerate}
}
\end{definition}

  Indeed, assume that we are given a real CL arrangement $\A$ with only one conic $C$ and the graph $G(\A)$ is a conjugation-free graph.
   Given an arrangement $\A'$, such that $C \in \A'$, with a graph $G' \subset G$ with $\beta(G') = 0$, or $\beta(G') = 1$ and $G'$ satisfies the conditions of Proposition \ref{prsCF}(2), we can add the ``missing" lines (i.e. all the lines in $\A - \A'$) to this arrangement, in the same way described in the proof of Theorem \ref{thmCFG}, and the  conjugation-free property of this arrangement is preserved (since all the lines intersect the conic in nodes, as we pass them through points not on $C$, we can use the same arguments that were used in the proof of Proposition \ref{prsCF}).

Note that adding a conic $C_2$, passing through a single intersection point, to a CL arrangement $\A$ with one conic $C_1$, which has a conjugation-free graph associated to it, also preserves
the conjugation-free property, under the following condition: Let $C_1,C_2$ be the two conics of the CL arrangement $\A \cup C_2$. For $i=1,2$, let $V_i$ be the set of vertices of $G(\A \cup C_2)$ which lie  on the conic $C_i$. If $G(\A \cup C_2)$ is a disjoint union of two conjugation-free graphs $G_1,G_2$ such that $V_i \subset G_i$, then $\pi_1 (\CC^2 - (\A \cup C_2))$ has a conjugation-free geometric presentation (by Corollary \ref{corCFtwoCon}).

\medskip

We finish this section with the following conjecture, related to real line arrangements with a fundamental group having a conjugation-free geometric presentation:
\begin{conjecture}
Let $\LL$ be a real line arrangement such that  $\pi_1(\CC^2 - \LL)$ has a conjugation-free geometric presentation, whose associated graph might not be a conjugation-free graph. Let $C$ be a conic that passes through a single intersection point of $\LL$. Then  $\pi_1(\CC^2 - (\LL \cup C))$ has a conjugation-free geometric presentation as well.
\end{conjecture}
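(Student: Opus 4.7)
The plan is to adapt the proof of Proposition \ref{lemAddLineComProof}(2) with the conic $C$ playing the role of the newly added line. I would first apply the equisingular operations $h_1$, $h_2$, and $h_4$ (Lemmas \ref{g1_op_CFpreserve_CL}, \ref{g2_op_CFpreserve}, and \ref{g5_op_CFpreserve_CL}) to place $C$ in a canonical position: both branch points $b_1, b_2$ of $C$ lie to the left of all other singular points of $\LL \cup C$ in the $x$-coordinate, with the distinguished multiple point $p \in \LL \cap C$ to the right of $b_1, b_2$. I then choose an initial real basepoint $u$ to the right of every singular point, so that the two conic generators in the fiber over $u$ sit consecutively. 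After establishing the conjugation-free property at this particular $u$, an extension of Lemma \ref{g3_op_CFpreserve_CL} to the present setting will handle movement to arbitrary real basepoints.

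The core analysis is the analogue of Steps \ref{step1CL}--\ref{step4CL} for the new arrangement. The contributions from $C$ split into four kinds of relations that must each be shown to simplify without conjugations: the two branch point relations, which should collapse to $x_1 = x_2 =: x$; the node relations at each point of $L \cap C$ for $L \in \LL$, which should each become a true commutator $[x, g_L] = e$; the amended relation at $p$, which should take the conjugation-free cyclic form in the generators of the lines through $p$ together with $x$; and the relations already present in $\pi_1(\C^2 - \LL)$, whose existing simplification process must be reproduced in the enlarged presentation by diffusing $x$ across line generators via the commutations just mentioned. This last step follows the pattern of Part~(II) of the proof of Step \ref{lemConjFreeLine}.

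The principal obstacle is establishing the commutation $[x, g_L] = e$ for every line $L \in \LL$ without conjugations. In the $\beta = 0$ setting of Step \ref{prop_commute}, this followed from a leaf-peeling induction on the forest $G(\A)$; here, however, $G(\LL \cup C) = G(\LL)$ may be an arbitrary graph carrying multiple cycles, and no such induction is immediately available. The geometric input that we do possess is that $C$ meets every line of $\LL$ in at least one node disjoint from $p$ (two such nodes if $L$ does not pass through $p$, exactly one if it does), so each $L$ yields a raw relation of the form $[w_1 x w_1^{-1}, w_2 g_L w_2^{-1}] = e$ with conjugating words $w_i$ built from line generators. The strategy is to propagate these raw commutations outward from $C$ using the known conjugation-free relations of $\pi_1(\C^2 - \LL)$, and the technical heart of the conjecture lies in verifying that this propagation can always be carried out. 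A natural attempt is to induct on some combinatorial distance from each line to $C$ measured along $G(\LL)$, in the spirit of Remark \ref{remPropComm}, but constructing such an invariant and verifying its inductive step in the presence of cycles is what makes the conjecture nontrivial and presently open.
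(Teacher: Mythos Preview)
The statement you are addressing is a \emph{conjecture}; the paper offers no proof of it, so there is nothing to compare your attempt against. Your proposal is therefore not in competition with any argument of the paper, and indeed you yourself acknowledge in the final paragraph that the key step remains open. In that sense your write-up is an honest assessment of the difficulty rather than a proof.

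Two comments on the substance of your outline. First, the operations $h_1$, $h_2$, $h_4$ (Lemmas \ref{g1_op_CFpreserve_CL}, \ref{g2_op_CFpreserve}, \ref{g5_op_CFpreserve_CL}) all act on \emph{lines} of the arrangement, not on conics: they rotate a line about a multiple point, pass a line over another line, or perform a complex rotation of a line past a tangency. None of them lets you reposition $C$ itself, so the first step of your plan---moving $C$ into a canonical position using these lemmas---is not available as stated. You would either need new equisingular moves for conics, or else rearrange the lines of $\LL$ relative to a fixed $C$, which changes the logic of the argument.

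Second, your identification of the principal obstacle is exactly right and matches the paper's own indication of why this is left as a conjecture. The commutation $[x,\Gamma_i]=e$ in Step \ref{prop_commute} is obtained by a leaf-peeling induction on the forest $G(\A)$, and Remark \ref{remPropComm} together with Proposition \ref{prsCF} show how to push this to a single cycle under an extra hypothesis; but for an arbitrary $G(\LL)$ with several cycles there is no evident replacement for that induction, and the raw node relations you describe need not simplify without it. This is precisely the gap that keeps the statement conjectural.
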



\begin{thebibliography}{99}

\bibitem{AT3} M. Amram, D. Garber and M. Teicher, {\em Fundamental groups of tangented conic-line arrangements with singularities up to order 6}, Math. Zeit. {\bf 256}, 837--870 (2007).
%
%
\bibitem{AT2} M. Amram and M. Teicher, {\em Fundamental groups of some special quadric arrangements}, Rev. Mat. Complut. {\bf 19}(2), 259--276 (2006).
%
%
\bibitem{AT1} M. Amram, M. Teicher and  A. M. Uludag, {\em Fundamental groups of some quadric-line arrangements}, Topology Appl. {\bf 130}(2), 159--173 (2003).


\bibitem{AB} E. Artal-Bartolo, {\it Fundamental group of class of rational cuspidal curves}, Manuscripta Math. {\bf 93}, 273--281 (1997).

\bibitem{AB1} E. Artal-Bartolo, {\it A curve of degree five with non-abelian fundamental group}, Topology Appl. {\bf 83}, 13--29 (1997).

\bibitem{AB-CR} E. Artal-Bartolo and J. Carmona-Ruber, {\it Zariski pairs, fundamental groups and Alexander polynomials}, J. Math. Soc. Japan {\bf 50}(3), 521--543 (1998).

\bibitem{ABCT} E. Artal-Bartolo, J.I. Cogolludo and H. Tokunaga, {\it A survey on Zariski pairs},
in: {\it Algebraic geometry in east Asia, Hanoi 2005}, Adv. Stud. Pure Math. {\bf 50}, Math. Soc. Japan, Tokyo, 1--100 (2008).

\bibitem{Deg} A.I. Degtyarev, {\it Quintics in $\C\PP^2$ with nonabelian fundamental group}, Algebra i Analiz {\bf 11}(5), 130--151 (1999) [Russian]; \emph{English translation}: St. Petersburg Math. J. {\bf 11}(5), 809--826 (2000).

\bibitem{DOZ} G. Dethloff, S. Orevkov and M. Zaidenberg,
  {\it Plane curves with a big fundamental group of the complement},
  { in:} {\it Voronezh Winter Mathematical Schools: Dedicated
  to Selim Krein} (P.~Kuchment, V.~Lin, eds.),
  American Mathematical Society Translations--Series 2 {\bf 184}, 63--84 (1998).

\bibitem{EGT1} M. Eliyahu, D. Garber and M. Teicher, {\it A conjugation-free geometric presentation of fundamental groups of arrangements}, Manuscripta Math. {\bf 133}(1--2), 247--271 (2010).

\bibitem{EGT2} M. Eliyahu, D. Garber and M. Teicher, {\it A conjugation-free geometric presentation of fundamental groups of arrangements II: Expansion and some
      properties}, Int. J. Alg. Comput. {\bf 21}(5), 775--792 (2011).

\bibitem{ELST} M. Eliyahu, E. Liberman, M. Schaps and M. Teicher, {\it Characterization of line arrangements for which the fundamental group of the complement is a direct product}, Alg. Geom. Topo. {\bf 10}, 1285--1304 (2010).

\bibitem{Fa1} K.M. Fan, {\it Position of singularities and fundamental group of the
   complement of a union of lines}, Proc. Amer. Math. Soc. {\bf 124}(11),
   3299--3303 (1996).

\bibitem{Fa2} K.M. Fan, {\it Direct product of free groups as the fundamental
   group of the complement of a union of lines}, Michigan Math. J.
   {\bf 44}(2), 283--291 (1997).

\bibitem{FG2} M. Friedman and D. Garber, {\em On the structure of fundamental groups of conic--line arrangements having a cycle in their graph}, submitted.

\bibitem{FT5point} M. Friedman and M. Teicher, {\em The regeneration of a 5-point}, Pure and Applied Mathematics Quartly (Fedor Bogomolov special issue, part I),  {\bf 4}(2), 383--425 (2008).

\bibitem{GAP} The GAP Group, {\it GAP -- Groups, Algorithms, and Programming}, Version 4.4.12; 2008. (http://www.gap-system.org)
\bibitem{Gar} D. Garber, {\it On the connection between affine and projective fundamental groups of line arrangements and curves}, in: {\it Singularit\'es Franco-Japonaises} (J.-P. Brasselet and T. Suwa, eds.), S\'eminaires \& Congr\`es {\bf 10}, 61--70 (2005).

\bibitem{GTV} D. Garber, M. Teicher and U. Vishne, {\it Classes of wiring diagrams and their invariants}, J. Knot Theory Ramifications {\bf 11}(8), 1165--1191 (2002).


\bibitem{HR} D.F. Holt and S.E. Rees, {\it The isomorphism problem for
   finitely presented groups}, in: {\it Groups, Combinatorics and Geometry},
   London Math. Soc. Lect. Notes Ser. {\bf 165},  459--475 (1992).

\bibitem{JY} T. Jiang and S.S.-T. Yau, {\it Diffeomorphic types of the complements of arrangements of hyperplanes}, Compositio Math. {\bf 92}(2), 133--155 (1994).

\bibitem{vK} E.R. van Kampen, {\em On the fundamental group
    of an algebraic curve}, Amer. J. Math. {\bf 55}, 255--260 (1933).


\bibitem{milnor} J. Milnor, {\it Morse Theory}, Ann. Math. Stud.
    {\bf 51}, Princeton University Press, Princeton, NJ (1963).


\bibitem{MoTe1} B. Moishezon and M. Teicher, {\em Braid group technique
in complex geometry, I, Line arrangements in $\CC\PP^2$}, Contemp.
Math. \textbf{78}, 425--555 (1988).

\bibitem{MoTe2} B. Moishezon and M. Teicher, {\em Braid group technique in
complex geometry, II, From arrangements of lines and conics to
cuspidal curves}, Algebraic Geometry, Lecture Notes in Math.
\textbf{1479}, 131--180 (1990).


\bibitem{NT}
M. Namba and H. Tsuchihashi, {\it
On the fundamental groups of Galois covering
spaces of the projective plane}, Geom. Dedicata
 \textbf{104}(1), 97--117 (2004).


\bibitem{OkSa}
M. Oka and K. Sakamoto, {\it Product theorem of the
fundamental group of a reducible curve}, J. Math. Soc. Japan {\bf 30}(4), 599--602 (1978).

\bibitem{OT} P. Orlik and H. Terao, {\it Arrangements of hyperplanes}, Grundlehren der Mathematischen Wissenschaften {\bf 300} (1992).

\bibitem{Tok}
H. Tokunaga,
{\it Elliptic dihedral covers in dimension 2, geometry of sections of elliptic surfaces, and Zariski pairs for line-conic arrangements}, preprint
(arXiv:1111.5924).


\bibitem{WY} S. Wang and S.S.-T. Yau, {\it Rigidity of differentiable structure for new class of line arrangements}, Comm. Anal. Geom. {\bf 13}(5), 1057--1075 (2005).

\bibitem{Z1} O. Zariski, {\it On the problem of existence of
    algebraic functions of two variables possessing a given
    branch curve}, Amer. J. Math. {\bf 51}, 305--328 (1929).

\bibitem{Za2} O. Zariski, {\em Algebraic surfaces}, Springer, Heidelberg, 2nd edition (1971).
\end{thebibliography}
\end{document}